\newtheorem{theorem}{Theorem}
\newtheorem*{thm}{Theorem} 
\newtheorem*{pro}{Proposition}
\newtheorem*{lem}{Lemma}
\newtheorem*{cor}{Corollary}
\theoremstyle{definition}
\newtheorem*{dfn}{Definition}
\newtheorem*{dfna}{Alternative Definition}
\newtheorem*{exa}{Example}
\newtheorem*{note}{Note}
\numberwithin{figure}{section}
\newcommand{\upperRomannumeral}[1]{\uppercase\expandafter{\romannumeral#1}}
\newcommand{\Spin}{{\operatorname{Spin}}}
\newcommand{\nc}{\newcommand}    
\newcommand{\psdiag}[3]{\hspace{1mm}\raisebox{-#1mm}{\epsfysize#2mm
\epsffile{#3.eps}}\hspace{1mm}}
\def\be#1\ee{\begin{equation}#1\end{equation}}
\nc{\bc}{\begin{center}}
\nc{\ec}{\end{center}}
\nc{\bb}{\mathbb}
\nc{\Z}{{\mathbb Z}}
\nc{\Q}{{\mathbb Q}}
\nc{\N}{{\mathbb N}}
\nc{\Ob}{\operatorname{Ob}}
\nc{\Obj}{\operatorname{Obj}}
\nc{\Rib}{\operatorname{Rib}}
\nc{\Hom}{\operatorname{Hom}}
\nc{\SO}{\operatorname{SO}}
\nc{\BSO}{\operatorname{BSO}}
\nc{\BSpin}{\operatorname{BSpin}}
\nc{\ESO}{\operatorname{ESO}}
\nc{\K}{\operatorname{K}}
\nc{\PK}{\operatorname{PK}}
\nc{\T}{\operatorname{T}}
\nc{\diag}{\operatorname{diag}}
\nc{\Rep}{\operatorname{Rep}}
\nc{\Tors}{\operatorname{Tors}}
\nc{\TorsH}{\Tors H}
\nc\congto{\overset{\cong}{\to}}
\nc\Span{\operatorname{Span}}
\nc\SsS{(\Sigma,s_{\Sigma })}
\nc\G{{\mathcal{G}}}
\nc\id{{\operatorname{id}}}
\nc\Ker{\operatorname{Ker}}
\nc\Homeo{\operatorname{Homeo}}
\nc\MCG{\operatorname{MCG}}
\nc\Id{\mathrm{Id}}
\nc\MOO{\mathrm{MOO}}
\newcommand\begin{figure}[t]
\nc{\End}{\operatorname{End}}
\nc{\tr}{\operatorname{tr}}
\nc{\lk}{\operatorname{lk}}
\renewcommand{\Im}{{\operatorname{Im}}}
\nc{\U}{\operatorname{U}}
\nc{\TX}{\operatorname{TX}}
\nc{\TM}{\operatorname{TM}}
\nc{\SU}{\operatorname{SU}}
\nc{\card}{\operatorname{card}}
\nc{\fk}{\mathbf{k}}
\theoremstyle{remark}
\def\1{\mathbf{1}}
\newcommand{\Vect}{\operatorname{Vect}}
\newcommand{\I}{\operatorname{I}}
\newcommand{\fsl}{{\mathfrak{sl}}}
\def\R{\mathbb R}
\def\C{{\mathcal C}}
\def\v8{\vskip 8pt}
\def\a{\alpha}
\def\b{\beta}
\def\la{\langle}
\def\ra{\rangle}
\def\l{\lambda}
\def\n{\nu}
\def\m{\mu}
\def\d{\delta}
\def\k{\mathbf k}
\def\G{\Gamma}
\begin{document}
\date{\today}
\title{Spin modular categories}
\author[Anna Beliakova]{Anna Beliakova}
\author[Christian Blanchet]{Christian Blanchet}
\author[Eva Contreras]{Eva Contreras}

\maketitle
\begin{abstract}
Modular categories are a well-known source of
quantum 3--manifold invariants.  
In this paper we study structures on modular categories which allow to define 
refinements of quantum 3--manifold invariants involving cohomology classes or
generalized spin 
and complex spin structures.
 A crucial role in our
construction is played by objects which are
invertible under tensor product. All known examples of cohomological or spin type refinements
of the Witten-Reshetikhin-Turaev 3--manifold invariants are
 special cases of our construction. In addition,
we establish a splitting formula for the refined invariants, generalizing
the well-known product decomposition of quantum invariants into projective ones and those determined by the linking matrix.
 \end{abstract}

\section{Introduction}
In the late 80's, inspired by Witten's ideas \cite{Witten}, Reshetikhin and Turaev \cite{ReshetikhinTuraev}
came up with a construction of new 3--manifold invariants, known as WRT quantum invariants. 
Few years later, Turaev \cite{Tbook} formalized this construction by introducing the notion of
{\it modular category}. His main result is that any modular category gives rise to a quantum $3$--manifold invariant.

A modular category is a special kind of ribbon category which has a finite set of
simple objects $\G$, including the unit object $\1\in \G$, satisfying duality, domination and
non-degeneracy axioms.  A ribbon category is a monoidal category with braiding, twist and compatible duality.
 Ribbon categories are  universal receivers for  invariants of ribbon graphs \cite{Tbook}.
Examples of modular categories arise from  representation theory of  quantum groups, when the
quantum parameter $q$ is a root of unity, 
or  can be constructed  skein theoretically \cite{BHMV,L1,Blanchet,BB}.
Many authors  \cite{KM},
\cite{Tu1}, \cite{Bl1} observed independently that for some special values of $q$, the
$\fsl_2$ WRT invariants admit spin and cohomological refinements. Cohomological refinements give rise to
homotopy quantum field theories (HQFT's), constructed by Turaev in \cite{THQFT}. 
However, spin refinements do not fit in the framework of HQFT's.

The main aim of this paper is to provide  an algebraic setting for spin type  refinements of quantum 3--manifold invariants.
Before explaining our results let us recall few definitions. 


Given a group $G$,
 a $G$--category was defined in \cite[Section VI]{THQFT} as an additive monoidal category $\C$
with left duality and unit object $\1$ that splits 
as a disjoint union of full subcategories $\{\C_\a\}_{\a\in G}$ such that
\begin{itemize}
\item
$\Hom_\C(\C_\a,\C_\beta)=0$ if $\a\neq \beta$;
\item
for $U\in \C_\a$ and $V\in \C_\beta$, $U \otimes V \in \C_{\a\beta}$; 
\item
$\1\in \C_1$, and for $U\in \C_\a$, $U^*\in \C_{\a^{-1}}$.
\end{itemize}

We call an object $t$ of a modular category {\it invertible} if there exists an object $\l$, such that
$t\otimes \lambda \cong \1$. 
It is easy to see that isomorphism classes of invertible objects form a finite abelian group
under tensor multiplication. Let us denote  by $G=G_\mathcal{C}$
the group of isomorphism classes of invertible objects in the modular category $\mathcal{C}$. 
In Section 5 we  show that
the braiding (or monodromy)
coefficients of $\lambda \in \G$ with $t\in G$ define a map $\G\to \Hom(G, S^1)=\widehat G$.
This map induces  on $\C$ the structure of a  $\widehat G$--category.

Note 
 the braiding matrix defines a bilinear form on $G$, and the twist coefficients extend it to a quadratic form. 
A special role in our approach will play a subgroup $H$ of $G$, such that
 the bilinear form restricted to $H$
is trivial  while  the quadratic extension is not.

Let us state  our main definition.

\begin{dfn}\label{def1}
Let $\mathcal{C}$ be a modular category with a group $G$ of invertible objects. 
For any subgroup $H\subset G$, we call  $\mathcal{C}$ 
$H$--refinable if $H \subset \Ob(\C_1)$. Moreover, an $H$--refinable modular category $\C$ 
is   called {\it  $H$--spin} if the twist quadratic form restricted to
$H$ is non-trivial, or equivalently if $H$ has at least one element with twist coefficient $-1$. 
When $H$ is cyclic of order $d$, we will use shorthand 
$d$--spin and $d$--refinable.
\end{dfn}

For example, the $\fsl_2$  modular category at the $r$th root of unity $q$ is $2$--refinable
for $r=0\pmod 2$ and it is  $2$--spin
if $r= 0\pmod 4$. The group $G=H=\Z/2\Z$ is
generated by the $(r-1)$--dimensional representation.

We say that $t\in G$ has order $n$ if $n$ is the minimal integer such that $t^n\cong 1$. 
We will see that the order of an element with twist $-1$
has to be even.





$H$--refinable modular categories which are non spin give rise
 to invariants
of pairs $(M, h)$ for any  compact orientable 3--manifold $M$ and
a cohomology class $h\in H^1(M;\widehat{H})$. They also fit
in the setting of   modular  group-categories
 introduced in \cite{THQFT}. 
In the spin case, the formalism of \cite{THQFT} does not strictly apply  since the subcategory in trivial degree is not modular. 

Let us concentrate on the spin case.
If we have a $H$--spin modular category, then the twist coefficients define an order $2$ element $v\in \widehat H$, which 
we will call the spin character.  We will extend the definition of generalized spin 
structures with modulo $d$ coefficients given in \cite{Bl1} 
to this situation and define $(\widehat{H},v)$ generalized spin structures. One of the  results of this paper is the following.


\begin{theorem} \label{main}
Any $H$--spin  modular category $\C$ with associated spin character $v\in \mathrm{Tor}_2(\widehat{H})$ 
provides a topological invariant $\tau_\C(M,\sigma)$ of a pair
$(M,\sigma)$, where $\sigma$ is a $(\widehat{H},v)$ generalized spin structure on $M$. Moreover,
$$\tau_\C (M)=\sum_{\sigma} \tau_\C(M,\sigma).$$
\end{theorem}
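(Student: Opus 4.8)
The plan is to follow the classical Reshetikhin--Turaev surgery construction, but to keep track of the extra data carried by the subgroup $H$ and the spin character $v$. First I would recall the surgery presentation: any closed oriented 3--manifold $M$ is obtained by surgery on a framed link $L=L_1\cup\dots\cup L_n$ in $S^3$ with linking matrix $B_L$. A $(\widehat H,v)$ generalized spin structure on $M$ should be encoded, via the standard argument for mod--$d$ spin structures in \cite{Bl1}, as an $H$--valued (equivalently $\widehat H$--cohomology) labelling of the components of $L$ satisfying a characteristic-type condition relative to $v$; concretely, a solution in $\widehat H$ of the linear system determined by $B_L$ modulo the spin character. I would first verify that such solutions are in natural bijection with generalized spin structures on $M$, and that two surgery presentations related by handle slides and stabilizations induce compatible bijections --- this is where the definition of $(\widehat H,v)$ generalized spin structure has to be set up so that the bookkeeping matches Kirby moves.

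Next I would define the candidate invariant. Using the colourings of $\C$ by simple objects in $\Gamma$, and the fact that $\C$ is $H$--refinable so that $H\subset\Ob(\C_1)$, I would define
\[
\tau_\C(M,\sigma)=\Delta^{-\sigma_+}\,\Delta'^{-\sigma_-}\sum_{\text{colourings }c\text{ compatible with }\sigma}\Big(\prod_i \dim(c_i)\Big)\,F(L;c),
\]
where $F(L;c)$ is the Reshetikhin--Turaev bracket, $\sigma_\pm$ are the signature data of $B_L$, and ``compatible with $\sigma$'' means that the class in $\widehat H$ obtained from the monodromy pairing of the colour $c_i$ with $H$ (using the map $\Gamma\to\widehat H$ from Section~5) realizes the prescribed generalized spin structure. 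The key point making this well defined is that, because the monodromy pairing restricted to $H$ is trivial while the twist on $H$ is non-trivial, the $H$--decoration of $L$ behaves additively under handle slides, so the Kirby--II invariance reduces to the ordinary one summand by summand, while the spin character $v$ absorbs exactly the twist anomaly produced when a component's framing changes.

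I would then check the two Kirby moves. Invariance under stabilization (adding a $\pm1$--framed unknot) follows as in the unrefined case once one observes that the only generalized spin structure extending over the new unknot forces its $H$--colour contribution to be the relevant Gauss sum over $H$, which cancels the normalization factor $\Delta^{\mp1}$ --- here the non-triviality of the twist form on $H$ guarantees this Gauss sum over $H$ is nonzero and has the needed modulus. Invariance under handle slides is the main obstacle: one must show that sliding $L_j$ over $L_i$ both preserves the RT bracket summand-wise (standard, using the fusion/encircling argument) and transforms the $H$--labelling exactly by the affine change of variables corresponding to the same slide on the linear system defining the generalized spin structure. This compatibility is precisely where the hypothesis $H\subset\Ob(\C_1)$ and the vanishing of the bilinear form on $H$ are used, and where the definition of $(\widehat H,v)$ generalized spin structure must be arranged to be the ``right'' one; I expect verifying this matching to be the technical heart of the proof.

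Finally, the splitting formula $\tau_\C(M)=\sum_\sigma\tau_\C(M,\sigma)$ is essentially formal: summing the defining expression for $\tau_\C(M,\sigma)$ over all generalized spin structures $\sigma$ sums the colourings $c$ over all of them with no constraint, and since every colouring is compatible with exactly one $\sigma$ (the decoration map $\Gamma\to\widehat H$ assigns a well-defined class to each colour), the total is the unrefined Reshetikhin--Turaev sum, i.e. $\tau_\C(M)$. One small point to check here is that when $M$ admits no generalized spin structure of the prescribed type the empty sum is consistent, but for $v\in\mathrm{Tor}_2(\widehat H)$ a standard obstruction-theory argument shows such structures always exist, so the sum is genuinely over a nonempty set.
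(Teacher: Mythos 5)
Your overall strategy coincides with the paper's: the refined invariant is defined by colouring the $i$-th surgery component with the graded Kirby colour $\omega_{\sigma_i}=\sum_{\l\in\G_{\sigma_i}}\la\l\ra\,\l$ determined by the characteristic solution encoding $\sigma$, Kirby--II invariance comes from a graded sliding property, stabilization comes from the fact that only the grading $u=d/2$ (resp.\ $u=v$) survives in $F_\C(U_{\pm1}(\omega_u))$, and the general $H$ is handled by splitting into cyclic factors according to the twist of each generator. Two small inaccuracies in your outline are harmless: the non-vanishing of the normalizing factors comes from the non-degeneracy axiom of $\C$ (via $F_\C(U_{\pm1}(\omega))=F_\C(U_{\pm1}(\omega_{d/2}))\neq 0$), not from the non-triviality of the twist form on $H$; and the condition $H\subset\Ob(\C_1)$ is what makes the grading by $\widehat H$ well defined on all of $\G$, rather than something that enters the handle-slide bookkeeping separately.

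There is, however, a genuine gap in your argument for the splitting formula $\tau_\C(M)=\sum_\sigma\tau_\C(M,\sigma)$. You assert that ``every colouring is compatible with exactly one $\sigma$,'' but this is false: a colouring $c$ of $L$ determines a degree vector $(\deg c_1,\dots,\deg c_n)\in(\widehat H)^n$, and this vector corresponds to a generalized spin structure only when it satisfies the characteristic equation $\sum_j L_{ij}s_j=\tfrac{d}{2}L_{ii}$. Colourings whose degree vector is not characteristic are compatible with \emph{no} $\sigma$, so summing the refined invariants over $\sigma$ recovers only part of the unrefined sum $F_\C(L(\omega,\dots,\omega))=\sum_{u\in(\Z_d)^n}F_\C(L(\omega_{u_1},\dots,\omega_{u_n}))$. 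To close the gap one must prove the vanishing statement $F_\C(L(\omega_{u_1},\dots,\omega_{u_n}))=0$ whenever $u$ is not a characteristic solution. This is a substantive step in the paper: it is reduced, via Fenn--Rourke moves (first for $\pm1$-framed unknotted components, then for arbitrarily framed unknots by adding a $\pm1$-framed unknot and sliding, then for arbitrary components by unknotting with auxiliary unknots of linking number zero), to the computation $F_\C(U_{\pm1}(\omega_u))F_\C(U_{\mp1}(\omega_u))=d\,F_\C(H_{1,0}(\widetilde\omega_0,\widetilde\omega_u))\sum_{i=0}^{d-1}e_d^{ui}(-1)^i$, which vanishes unless $u=d/2$. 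Without this lemma the ``Moreover'' clause of the theorem does not follow from your set-up.
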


We expect that Theorem \ref{main} extends naturally to a spin type TQFT.

In Section 2 we  define $d$--{\it complex} spin structures. Let us denote the set of such structures on $M$
by $\Spin^c_d(M)$. 
Generalizing results of \cite{Massuyeau2},  we identify $\Spin^c_d(M)$
with the set of modulo $d$ Chern vectors which are further used for 
constructing extensions of WRT invariants.

\begin{theorem} \label{main_c} Suppose $d$ is an even positive integer.
For any $2d$--spin  modular category $\C$, there exists
 a topological invariant $\tau_\C(M,\sigma)$ of a pair
$(M,\sigma)$, where $\sigma\in \Spin^c_d(M)$ is a $d$--complex spin structure on $M$.
\end{theorem}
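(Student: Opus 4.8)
The plan is to reduce the complex-spin case to the (real) spin case already handled by Theorem \ref{main}, using the identification of $\Spin^c_d(M)$ with modulo-$d$ Chern vectors established (following \cite{Massuyeau2}) earlier in Section 2. First I would recall the surgery presentation: write $M$ as surgery on a framed link $L$ in $S^3$ with linking matrix $B$, so that a $d$-complex spin structure $\sigma$ corresponds to a Chern vector $c\in(\Z/d)^{|L|}$ satisfying the characteristic-type congruence $c\equiv \operatorname{diag}(B)\pmod 2$ together with the compatibility $Bc \equiv 0$ that makes it descend to $M$ (the precise form being the one fixed in Section 2). The quantity to be defined is a weighted Reshetikhin–Turaev-type state sum over colorings of $L$ by $\G$, where the weight attached to a component colored by $\lambda$ depends, via the $\widehat G$-grading of Section 5, on the residue of $c$ on that component; one then normalizes by the usual factors $\Delta_\pm$ and $(\Delta_+\Delta_-)^{-\text{something}}$ coming from the signature of $B$.

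The key steps, in order: (1) Given a $2d$-spin modular category, extract from Section 5 the homomorphism $\G\to\widehat G$ and restrict attention to the cyclic subgroup $H\subset G$ of order $2d$ with non-trivial twist quadratic form; the spin character is the order-$2$ element $v\in\widehat H$. (2) Observe that a $d$-complex spin structure is, by the Section 2 identification, the same data as a $\Z/2d$-valued refinement: the mod-$2$ reduction of the Chern vector records an ordinary spin structure, and the full mod-$d$ data records the extra $H^1(M;\Z/d)$-torsor part — i.e. $\Spin^c_d(M)$ fibers over $\Spin(M)$ with fiber an $H^1(M;\Z/d)$-torsor, matching the structure $(\widehat H,v)$ with $\widehat H\cong\Z/2d$. (3) Define $\tau_\C(M,\sigma)$ by the weighted state sum above and prove invariance under the two Kirby moves: for handle slides one uses the domination/non-degeneracy (the ``killing'' property of the Kirby color) together with the fact that the $\widehat G$-grading is multiplicative under $\otimes$, so that sliding a component over another shifts its Chern residue in exactly the way dictated by how $B$ transforms; for the blow-up/stabilization move one checks the $\pm1$-framed unknot colored compatibly contributes the expected Gauss sum $\Delta_\pm$ and that the parity constraint on $c$ forces the newly-created coordinate to the value making the signature correction exact. (4) Deduce well-definedness of $\tau_\C(M,\sigma)$ as a function of $(M,\sigma)$ alone.

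The main obstacle I expect is step (3), specifically the handle-slide invariance: one must verify that the extra Chern-vector weights are compatible with the combinatorics of the slide, which is where the hypothesis that $d$ is \emph{even} and that $\C$ is $2d$-spin (not merely $d$-spin) enters — the factor of $2$ is needed so that the quadratic refinement on $H\cong\Z/2d$ has the right parity to absorb the framing change $B_{ii}\mapsto B_{ii}+2B_{ij}+B_{jj}$ while the linear part absorbs $c_i\mapsto c_i+c_j$. A secondary technical point is checking that the state sum is insensitive to the choice of Chern vector representing a fixed $\sigma$, i.e. that changing $c$ by an element of $B\cdot(\Z/d)^{|L|}$ leaves the weighted sum unchanged; this again follows from the handle-slide-type computation combined with the vanishing $\Hom_\C(\C_\a,\C_\beta)=0$ for $\a\neq\beta$ in the $\widehat G$-grading. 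Once invariance is in place, the theorem follows; I would not expect a separate splitting-type argument to be needed here, though the construction is visibly parallel to — and could alternatively be derived as a corollary of — the machinery behind Theorem \ref{main}.
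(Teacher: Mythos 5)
Your step (3) contains several of the right ingredients, but the organizing idea of your proof --- reducing Theorem 2 to Theorem 1 by identifying $\Spin^c_d(M)$ with $(\widehat{H},v)$--spin structures for $\widehat{H}\cong\Z_{2d}$ --- does not work, and it is not what the paper does. The two sets are genuinely different: $\Spin^c_d(M)$ is affine over $H^2(M;\Z_d)$ (combinatorially, the quotient $\{\sigma\in(\Z_{2d})^n:\sigma_i\equiv L_{ii}\bmod 2\}/2\Im L$), whereas a $\Z_{2d}$--spin structure is affine over $H^1(M;\Z_{2d})$ (combinatorially, the solution set of $\sum_j L_{ij}s_j\equiv d\,L_{ii}\bmod 2d$). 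For $M=S^1\times S^2$ the first set has $d$ elements and the second $2d$, so no identification exists; in particular your claim that $\Spin^c_d(M)$ fibers over $\Spin(M)$ with an $H^1(M;\Z_d)$--torsor as fiber is false (the torsor is over $H^2$, not $H^1$). The correct relationship is Fourier duality, not equality: the paper defines
$$\tau_\C(M,\sigma)=(-d)^{-n}\sum_{(\epsilon_1,\dots,\epsilon_n)\in\sigma}\frac{F_\C(L(\omega^{\epsilon_1},\dots,\omega^{\epsilon_n}))}{F_\C(U_1(\omega))^{b_+}F_\C(U_{-1}(\omega))^{b_-}}$$
using the \emph{dual} Kirby colors $\omega^{v}=\sum_{\l}e_{2d}^{v\deg(\l)}\la\l\ra\,\l$ and summing over \emph{all} representatives of the Chern-vector class, whereas Theorem 1 colors components by the graded pieces $\omega_u$ indexed by characteristic solutions. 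One theorem is not a corollary of the other.

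Consequently the invariance checks must be done from scratch for the dual colors. Handle slides require the dual sliding property (which produces the extra phase $e_{2d}^{-v\deg(\nu)}$, absent in the graded case), and stabilization is not a matter of the new coordinate being ``forced to a value'': the new component carries $\omega^{1+x}$ summed over all even $x\in\Z_{2d}$, and one needs the Gauss-sum identity $\sum_{x\ \mathrm{even}}e_{2d}^{(1+x)i}\in\{0,-d\}$ combined with the vanishing lemma $F_\C(U_{\pm1}(\omega_u))=0$ for $u\neq d$ --- this is precisely where the $2d$--spin hypothesis and the evenness of $d$ enter, not in the handle slide as you suggest --- to produce the factor $-d\,F_\C(U_1(\omega))$ cancelling the $(-d)^{-n}$ normalization. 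Your ``secondary technical point'' (independence of the chosen Chern-vector representative) is dissolved in the paper by averaging over the whole class; proving it for a single representative again requires the same vanishing lemma and is not a formal consequence of $\Hom_\C(\C_\a,\C_\b)=0$. So the construction you gesture at in step (3) can be made to work, but only after abandoning the reduction in steps (1)--(2) and setting up the dual-color state sum independently.
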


%

We call a modular category $\C$ {\it reduced} if it is $G_\C$--refinable.

Assume  the group of
invertible objects $G=\la t\ra$ of our modular category $\C$ is cyclic, 
but $t\in \Ob(\C_\delta)$ with
$\delta\neq 1$. If
$|G|=d$, then there is a positive integer $m$, such that  $\delta m=d$.
Clearly, $\C$ is not reduced, but
$H$--refinable, where the subgroup $H$ of order $\delta$ is generated by $ t^m\in \C_1$. 

Given these data, one way to
 construct 
 refined invariants is by using the $\delta$--refinable structure on $\C$.
 In Section 7 we show  that there is a more efficient way to compute this invariant.
 Namely, there always exists a smaller
 reduced category $\tilde \C$, which leads to the same invariant up to a 
correction term fully determined by the linking matrix. 
If $\gcd(m,\delta)=1$,  $\tilde\C$ is particularly simple and coincides with $\C_1$ where $1\in \widehat H$.


\begin{theorem}\label{decomp}
 Let $\C$ be a  modular category with a cyclic group of invertible objects $G=\la t\ra$.
Assume $d=|G|$, $t\in \C_\delta$ and $\delta m=d$. 
Then, there exists a reduced $\delta$--refinable category   
 $\widetilde{\C}$, a positive integer $\alpha$ and a root of unity $\xi$  such that
for any closed oriented 3--manifold $M$ we have
 $$\tau_\C(M, \sigma)= \alpha^{-b_1(M)}
\tau_{\widetilde{\C}}(M, \sigma) \tau^{\rm MMO}_{\xi}(M, \sigma)$$
 where 
$\tau^{\rm MMO}_{\xi}(M,\sigma)$ is the refined
   Murakami--Ohtsuki--Okada invariant.
 We have   either  $\sigma\in H^1(M,\Z_\delta)$
or $\sigma\in \Spin_\delta(M)$, and $b_1(M)$ is the first Betti number.

In the particular case, when $gcd(\delta,m)=1$, we have $\alpha=1$, $\xi$ is a root of unity of order $m$ and $\tau^{\rm MMO}_{\xi}(M, \sigma)$ does not depend on $\sigma$ so that we have
$$\tau_\C(M)= 
\tau_{\tilde\C}(M) \tau^{\rm MMO}_{\xi}(M)\ . $$

\end{theorem}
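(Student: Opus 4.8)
The plan is to prove Theorem \ref{decomp} by reducing the cyclic group $G=\langle t\rangle$ of invertible objects to its subgroup $H=\langle t^m\rangle$ of order $\delta$, then analyzing the surgery formula that defines $\tau_\C$.

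\medskip

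First I would set up the surgery presentation. Write $M$ as surgery on a framed link $L$ in $S^3$ with linking matrix $B$, and recall that $\tau_\C(M,\sigma)$ is computed by a state sum over $H$-colourings of $L$ (equivalently over $H^1(M;\widehat H)$ or $\mathrm{Spin}_\delta(M)$), where to each colouring one assigns a product of ribbon-graph invariants in $\C$. The key structural input is the action of the invertible objects: colouring a component of $L$ by $t^k$ and sliding it produces monodromy and twist scalars governed by the bilinear and quadratic forms on $G$ described in the excerpt. So the first real step is to separate, inside the colourings by $\C$, the ``linking-matrix part'' carried by powers of $t$ from the ``essential part'' carried by the simple objects of $\C$ modulo the $G$-action.

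\medskip

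Second, I would construct $\widetilde\C$ explicitly as the de-equivariantization / orbit category of $\C$ by the subgroup $\langle t^\delta\rangle\cong\Z/m\Z$ (the part of $G$ that is \emph{not} used for refinement). One checks that $\widetilde\C$ is again modular, that its group of invertibles is $\langle \bar t\rangle\cong\Z/\delta\Z$ now sitting in trivial degree, so that $\widetilde\C$ is reduced and $\delta$-refinable, and that its $S$- and $T$-matrices are obtained from those of $\C$ by the corresponding quotient. Passing from a colouring of $L$ by $\C$ to a colouring by $\widetilde\C$ discards exactly the $\Z/m\Z$-worth of data on each component, and the Gauss-sum bookkeeping for that discarded data is precisely the Murakami--Ohtsuki--Okada (MMO) invariant $\tau^{\rm MMO}_\xi$ associated to the quadratic form $q(k)=\xi^{k^2}$ on $\Z/m\Z$, where $\xi$ is the twist coefficient of $t^\delta$; the normalization mismatch between the two state sums contributes the factor $\alpha^{-b_1(M)}$, with $\alpha$ the relevant ratio of total quantum dimensions / Gauss sums (a scalar independent of $M$ by the standard argument using $b_1$ and the signature defect). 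The compatibility of $\sigma$ on both sides is automatic because the refinement only ever involved $H$, which is untouched by the quotient.

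\medskip

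Finally, for the coprime case $\gcd(\delta,m)=1$: here $G\cong H\times\langle t^\delta\rangle$ splits, the bilinear form puts the two factors in duality with their respective characters, and the quotient $\widetilde\C=\C_1$ (the degree-$1$ piece for $1\in\widehat H$) is the obvious choice; one checks $\alpha=1$ because no quantum-dimension normalization is lost in a direct-factor splitting, that $\xi$ has order exactly $m$ since $t^\delta$ generates a nondegenerate order-$m$ subgroup, and that $\tau^{\rm MMO}_\xi$ becomes $\sigma$-independent because the $\Z/m\Z$-factor carries no spin information (its quadratic form takes value $1$ on the $2$-torsion, as $m$ is odd). The main obstacle is the second step: verifying that the orbit/de-equivariantization construction outputs a \emph{modular} category with the predicted $S$-matrix and that the resulting scalar discrepancy is exactly the MMO invariant and not merely ``some linking-matrix invariant'' — this requires carefully matching the Gauss sum over $\langle t^\delta\rangle$-colourings with the explicit MMO state sum, including the signature/framing corrections, and checking that the leftover normalization is a pure power of $b_1(M)$.
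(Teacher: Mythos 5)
Your overall shape of the answer is right --- strip an $m$'s worth of colour data from each surgery component, recognize the resulting Gauss sum as the Murakami--Ohtsuki--Okada invariant, and absorb the normalization into $\alpha^{-b_1(M)}$ --- and in the coprime case your sketch essentially reproduces the paper's Appendix argument, where $\widetilde{\C}$ is realized as the full subcategory on $\widetilde{\Gamma}=\{\lambda:\deg\lambda\equiv 0 \bmod m\}$ and the Kirby colour splits as $\omega=\sum_{i=0}^{m-1}\langle t\rangle^{i\delta}t^{i\delta}\widetilde{\omega}$, with the $t^{i\delta}$-colourings contributing exactly $\xi^{{}^t l L l}$.

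However, your second step --- constructing $\widetilde{\C}$ as the de-equivariantization (orbit category) of $\C$ by $\langle t^\delta\rangle\cong\Z_m$ --- has a genuine gap, and it is precisely the point where the paper does something different. De-equivariantization requires the subgroup one quotients by to be Tannakian: its objects must be transparent, with trivial twist and quantum dimension $1$. In $\C$ the objects $t^{k\delta}$ are in general neither transparent (their monodromy with $\lambda$ is $\kappa^{k\delta\deg\lambda}$) nor of trivial twist ($\theta_{t^{k\delta}}=\xi^{k^2}$), so the orbit construction is simply not available; this is not a technical verification you postponed but an obstruction. The paper's key idea, which your proposal is missing, is to first build an auxiliary premodular category $\C'$ with simple objects $\Gamma\times\Z_\alpha$ (a central-extension-type enlargement governed by a lift $f:\Gamma\to\Z_{d\alpha}$ of the degree map, with $\delta=\alpha\beta$, $\gcd(\beta,\alpha m)=1$, $\alpha\equiv m\bmod 2$) and to \emph{twist the braiding} by $\xi^{-f(V)f(W)}$; only after this twist does a cyclic subgroup of order $\alpha m$ become transparent with trivial twist, so that the modularization results of Brugui\`eres and Beliakova--Blanchet apply and produce $\widetilde{\C}$. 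The braiding twist is also exactly what peels off the factor $\xi^{{}^t\gamma L\gamma}$, i.e.\ the MOO invariant, and the extension by $\Z_\alpha$ is what produces $\omega'_c=\alpha\,\omega_c$ and hence the factor $\alpha^{-b_1(M)}$; your description of $\alpha$ as ``a ratio of Gauss sums'' misses that it is a specific divisor of $\delta$ determined by the arithmetic of $\delta$ and $m$. A smaller point: in the coprime case you assert $m$ is odd to conclude $\sigma$-independence of $\tau^{\rm MMO}_\xi$; $\gcd(\delta,m)=1$ does not force $m$ odd, and the paper's argument instead passes to an even surgery presentation and shows the sum $\sum_{\gamma\equiv c}\xi^{{}^t\gamma L\gamma}$ is independent of the coset $c$.
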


The Murakami--Ohtsuki--Okada invariant  defined in \cite{MOO} depends only on the 
homological information which can be obtained from 
 the linking matrix of the surgery link.

Theorem 3 generalizes the well-known decompositon results for quantum invariants 
stated  in \cite{Le, Bl1}. We expect that these decomposition results extend
to refined TQFTs.


\vspace*{2mm}
\textbf{Organization of the paper.} After recalling the basic definitions of $\Spin^c$--structures, we give
homotopy theoretical and also combinatorial descriptions of their reduction modulo $d$. 
Further we recall  the definitions of ribbon and modular categories and  of quantum 3--manifold
invariants.
Section \ref{graded} deals with modular categories containing invertible objects. The  refined invariants are studied
in Sections \ref{refined} and 7. The last section is devoted to the proof of Theorem \ref{decomp}.\\


\section{Complex spin structures mod \emph{d}}

In this section, given a compact orientable 3--manifold $M$, 
we define the set $\Spin^c_d(M)$.

\vspace*{2mm}

Throughout this paper
 all manifolds
are assumed to be compact  and oriented; all (co)homology groups 
are computed with integer coefficients,
unless otherwise is specified; $\Z_x$ denotes
the cyclic group of integers
modulo $x$.

 \subsection{$\Spin^c$--structures}
 \label{equiv}
 Complex spin structures are additional structures
 some manifolds can be endowed with and
 just like the more common spin structures, they
 can be seen as a generalization of orientations. 
 We recall basic facts and equivalent
 ways to define $\Spin^c$--structures following the 
 lines of \cite{Massuyeau2} and \cite{Ozbagci:2004}.
 
 Let $n\geq 1$ be an integer. The group $\Spin(n)$ is defined as
 the non-trivial double 
 cover of the special orthogonal
 group $\SO(n)$:
 $$1\longrightarrow \Z_2 \longrightarrow \Spin(n)\overset{\lambda}{\longrightarrow}
 \SO(n)\longrightarrow 1.$$  
\begin{exa} $\Spin(1)\cong\Z_2$, $\Spin(2)\cong S^1$ and
  $\Spin(3)\cong \SU(2).$
 \end{exa} 
 \noindent The complex spin group is defined
 as the quotient
 $$\Spin^c(n):=\frac{\Spin(n)\times S^1}{ \Z_2}$$ where
 $\Z_2$ is generated by  $(-1,-1)\in \Spin(n)\times S^1$.
 It follows that the map $\rho:\Spin^c(n)\to \SO(n)$, defined as
 $\rho([A,z])=\lambda(A)$ is
 a principal $S^1$--fibration. 
\begin{exa}
 $\Spin^c(3)\cong \U(2)$.
\end{exa}

Let $X$ be an $n$--dimensional Riemannian manifold and denote by $P_{\SO(n)}\to X$ the
principal bundle of oriented orthonormal frames of $X$. 
 \begin{dfn}[\cite{Ozbagci:2004}]
 \label{definition}
  A $\Spin^c$--structure on the manifold $X$ is a principal 
  $\Spin^c(n)$--bundle $P_{\Spin^c(n)}\overset{s}{\longrightarrow}X$, together 
  with a map $\pi:P_{\Spin^c(n)} \longrightarrow P_{\SO(n)}$ that restricted 
  to the fibers is $\rho$, i.e, makes the following diagram
  commute, i.e,
  \begin{equation*}
\xymatrix@R=6mm@C=3mm{
P_{\Spin^c(n)}\times \Spin^c(n)\ar[dd]^{(\pi, \rho)} \ar[rr]& & P_{\Spin^c(n)}  \ar[dd]^{\pi}\ar[dr]^{s} \\
& & & X\\
 P_{\SO(n)} \times \SO(n) \ar[rr] & &  P_{\SO(n)} \ar[ur]\\
}
\end{equation*}
 \end{dfn}

 Since $\rho:
 \Spin^c(n)\to \SO(n)$ is the 
 $S^1$--bundle of the unique non trivial 
 line 
 bundle over $\SO(n)$ (\cite{Milnor:1963})  and 
 isomorphism classes
 of principal $S^1$--bundles
 over $X$ are in
 one-to-one correspondence 
 with elements of
 $H^2(X;\Z)$, complex spin
 structures can be defined as cohomology classes:
 \begin{dfna}[\cite{Ozbagci:2004}]
 A $\Spin^c$--structure on the manifold $X$ is an element $\sigma\in H^2(P_{\SO(n)};\Z)$
 whose restriction to every fiber of $P_{\SO(n)}\longrightarrow X$ is the
 unique non trivial element of $H^2(\SO(n);\Z)\cong \Z_2$, i.e,
 $$\Spin^c(X)=\{\sigma\in H^2(P_{\SO(n)};\Z)| \sigma_{|{\text{fiber}}}\neq 0 \in H^2(\SO(n);\Z)\}.$$
\end{dfna}

Let $\BSO(n)$ denote  the Grassman 
manifold of oriented $n$-planes in
$\R^\infty$ and let 
$\gamma_{\SO(n)}$ be
the universal $n$-dimensional oriented vector 
bundle over $\BSO(n)$. Note that for 
discrete topological groups $G$, the classifying space $BG$ is Eilenberg-Maclane of type $(G,1)$.
Let $h$ be the unique (up to homotopy)
non-homotopically trivial map from $\BSO(n)$
to the Eilenberg--Maclane space $\K(\Z,3)$. We fix $h$ in its homotopy class
and we define the fibration
$\BSpin^c(n)\overset{\pi}{\rightarrow}\BSO(n)$ as the pull-back
under $h$
of the path-space fibration over $\K(\Z,3)$.  
If we set $\gamma_{\Spin^c(n)}:=\pi^*(\gamma_{\SO(n)})$, then:\\


\begin{dfna}
 A $\Spin^c$--structure on the
 manifold $X$ is a homotopy class of bundle maps
 between the (stable) tangent bundle $\TX$ of $X$
 and $\gamma_{\Spin^c(n)}$.
\end{dfna}

\subsubsection{The Chern map}
\label{Chern}
Let us denote by $\alpha:\Spin^c(n)\to S^1$
the homomorphism
 $\alpha([A,z])=z^2$. Then, to any $\Spin^c$--
structure $\sigma$ on an $n$--dimensional manifold $X$ (in the sense of Definition \ref{definition}),
one can associate a complex line bundle as follows:
using the map $\alpha$, the action of the group $\Spin^c(n)$ 
on the space $P_{\Spin^c(n)}$ extends to an action
on the product $P_{\Spin^c(n)}\times \mathbb{C}$ and we consider
its orbit space
$$\det(\sigma):
=(P_{\Spin^c(n)}\times \mathbb{C})/\Spin^c(n),$$ called the \emph{determinant line bundle} of $\sigma$.
The Chern map $$c:\Spin^c(X)\to H^2(X)$$ is defined as $c(\sigma):=c_1(\det(\sigma)),$
where $c_1$ is the first Chern class of the bundle $\det(\sigma)$, 
and it is affine over the doubling map
$H^2(X)\overset{\cdot 2}{\to} H^2(X).$
See \cite{Massuyeau2} for more details.

\subsection{$d$--complex spin structures}\label{dcomplex}
In this subsection we define
$d$--complex spin structures (short $\Spin^c_d$--structures)
on $n$--dimensional manifolds 
and we describe some
of their properties. Then,
we focus on dimension three and we present
a set of refined Kirby's moves for
$\Spin^c_d$--manifolds obtained by surgery 
along links in $S^3$.

Let $\beta:H^2(\BSO(n);\Z_2)\longrightarrow H^3(\BSO(n);\Z_d)$ be the 
Bockstein homomorphism associated to the exact sequence of groups:
$$0\longrightarrow\Z_d\overset{\cdot 2}{\longrightarrow}\Z_{2d}\longrightarrow \Z_2
\longrightarrow0.$$
The following lemma will help justify our construction of $\Spin^c_d$--structures.
 \begin{lem}
  The group $H^3(\BSO(n);\Z_d)$ is cyclic of order $2$
  generated by $\beta(w_2)$, where $w_2$ is the second Stiefel--Whitney class.
 \end{lem}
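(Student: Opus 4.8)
The plan is to compute $H^3(\BSO(n);\Z_d)$ via the integral cohomology of $\BSO(n)$ in low degrees together with the universal coefficient theorem, and then identify the generator with $\beta(w_2)$. First I would recall the well-known low-degree integral cohomology of $\BSO(n)$ (for $n\geq 3$; the small cases $n=1,2$ being handled separately, where $\BSO(1)$ is a point and $\BSO(2)=\mathbb{CP}^\infty$ is treated directly): one has $H^0=\Z$, $H^1=0$, $H^2=0$, and $H^3\cong \Z_2$, the last generated by the integral class $W_3:=\beta_{\Z}(w_2)$, the image of $w_2$ under the integral Bockstein $H^2(\BSO(n);\Z_2)\to H^3(\BSO(n);\Z)$ associated to $0\to\Z\xrightarrow{\cdot 2}\Z\to\Z_2\to 0$. (This is standard: $\pi_1\BSO(n)=\Z_2$, $\pi_2\BSO(n)=0$, $\pi_3\BSO(n)=\Z$ for $n\geq 5$, with the relevant stabilization already at $n=3,4$; alternatively one reads it off the Serre spectral sequence of $\BSpin(n)\to\BSO(n)\to K(\Z_2,2)$ or from the known ring structure modulo 2.)

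Next I would feed this into the universal coefficient theorem for the coefficient group $\Z_d$:
$$
0\longrightarrow H^3(\BSO(n);\Z)\otimes \Z_d \longrightarrow H^3(\BSO(n);\Z_d)\longrightarrow \Tor(H^4(\BSO(n);\Z),\Z_d)\longrightarrow 0.
$$
Since $H^3(\BSO(n);\Z)\cong\Z_2$, the left term is $\Z_2\otimes\Z_d\cong\Z_{\gcd(2,d)}$; and $H^4(\BSO(n);\Z)$ is torsion-free in the relevant range (it is $\Z$, generated by the Pontryagin class $p_1$, for $n\geq 3$; again the small cases are immediate), so the $\Tor$ term vanishes. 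Hence $H^3(\BSO(n);\Z_d)\cong\Z_{\gcd(2,d)}$. The statement as written implicitly assumes $d$ even (this is the standing hypothesis wherever $\Z_d$-coefficients with the exact sequence $0\to\Z_d\xrightarrow{\cdot 2}\Z_{2d}\to\Z_2\to 0$ appear), so $\gcd(2,d)=2$ and the group is cyclic of order $2$.

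Finally I would identify the generator. The Bockstein $\beta:H^2(\BSO(n);\Z_2)\to H^3(\BSO(n);\Z_d)$ of the problem's exact sequence fits into a commuting ladder with the integral Bockstein $\beta_\Z$ via the map of short exact sequences $0\to\Z\to\Z\to\Z_2\to 0$ mapping to $0\to\Z_d\to\Z_{2d}\to\Z_2\to 0$ (reduction mod $d$ on the first two terms, identity on $\Z_2$); naturality of Bocksteins gives $\beta(w_2)=r_d(W_3)$ where $r_d:H^3(\BSO(n);\Z)\to H^3(\BSO(n);\Z_d)$ is the reduction. Under the isomorphism $\Z_2\otimes\Z_d\cong\Z_2$ from UCT, $r_d$ sends the generator $W_3$ of $H^3(\BSO(n);\Z)\cong\Z_2$ to the generator of the left-hand $\Z_2$, which is all of $H^3(\BSO(n);\Z_d)$. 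Hence $\beta(w_2)$ generates, and it is nonzero precisely because $W_3\neq 0$, i.e.\ because $w_2$ is not the reduction of an integral class on $\BSO(n)$. The main obstacle is not any one step but marshalling the low-degree (co)homology of $\BSO(n)$ cleanly and uniformly in $n$ — in particular being careful at $n=1,2$ and making sure the stable values of $H^3,H^4$ already hold for all $n\geq 3$; everything after that is a mechanical application of UCT and Bockstein naturality.
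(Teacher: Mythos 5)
The paper states this lemma without proof, so there is nothing of the authors' to compare your argument against; judged on its own terms, your proposal is correct in the range where the lemma is actually used. The inputs — $H^3(\BSO(n);\Z)\cong\Z_2$ generated by $W_3=\beta_{\Z}(w_2)$, torsion-freeness of $H^4(\BSO(n);\Z)$, the universal coefficient sequence, and Bockstein naturality giving $\beta(w_2)=r_d(W_3)$ with $r_d$ the mod $d$ reduction — are all standard and correctly assembled, and the identification of $r_d(W_3)$ with the generator via the injective map $H^3(\BSO(n);\Z)\otimes\Z_d\hookrightarrow H^3(\BSO(n);\Z_d)$ is sound. Two caveats. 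First, you rightly observe that the statement needs $d$ even: for odd $d$ your own computation gives $H^3(\BSO(n);\Z_d)\cong\Z_2\otimes\Z_d=0$, so the lemma as printed is false; this hypothesis is implicit in the paper (Theorem \ref{main_c} assumes $d$ even), but it should be recorded as a hypothesis of the lemma rather than noted mid-proof. Second, your remark that $n=1,2$ are ``handled separately'' is misleading: for $\BSO(2)=\mathbb{CP}^\infty$ the class $w_2$ lifts to the integral Euler class, so $W_3=0$ and $H^3(\BSO(2);\Z_d)=0$, and the lemma fails there too; the correct scope is $n\geq 3$, which is all the paper needs. Finally, a harmless slip: $H^4(\BSO(4);\Z)\cong\Z^2$ (Pontryagin and Euler classes), not $\Z$; since only torsion-freeness enters the $\operatorname{Tor}$ term, this does not affect the argument.
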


Since the elements of the group  $H^3(\BSO(n);\Z_d)\cong \Z_2$ are in
one-to-one correspondence with
homotopy classes of maps in $[\BSO(n); \K(\Z_d,3)]$,
there is a unique (up to homotopy) non-homotopically trivial map
$g:\BSO(n)\longrightarrow \K(\Z_d,3).$ We fix $g$ in its homotopy class 
and define the fibration $\pi_d:\BSpin^c_d(n)\longrightarrow \BSO(n)$
as the pull-back of the path space fibration of $\K(\Z_d, 3)$ under the map $g$:
\begin{equation*}
\xymatrix@R=8mm@C=5mm{
 \BSpin^c_d(n)\ar[d]^{\pi_d} \ar[r] & \PK(\Z_d,3) \ar[d] \\
 \BSO(n)\ar[r]^{g} & \K(\Z_d,3)  \\
}
\end{equation*}
We set $\gamma_{\Spin^c_d(n)}=\pi_d^*(\gamma _{\SO(n)})$.
Note that another choice of the map $g$ in
its homotopy class yields a different, but homotopy
equivalent, space $\BSpin^c_d(n)$.

\noindent Let $X$ be an $n$--dimensional Riemannian manifold.
\begin{dfn}
\label{bundlemap}
 A $d$--complex spin structure on $X$ is 
 a homotopy class of a bundle map between 
 the (stable) tangent bundle $\TX$ of $X$
 and $\gamma_{\Spin^c_d(n)}$.
\end{dfn} 

\begin{dfna}
\label{lift}
 A $d$--complex spin structure on $X$ is a
 homotopy class of a lift $\overline{f}$ of $f$ to $\BSpin^c_d(n)$, where
 $f:X\longrightarrow \BSO(n)$ is a classifying map for the
 bundle $\TX$.
 \end{dfna}
    
\begin{equation*}
\xymatrix@R=8mm@C=5mm{
              & \BSpin^c_d(n) \ar[d]^{\pi_d} \\
 X \ar[r]^{f} \ar@{.>}[ru]^{\overline{f}}& \BSO(n)} \\
\end{equation*}

Since the fiber of $\pi_d$ is the Eilenberg-Maclane space $\K(\Z_d,2)$,
there is a unique
obstruction $w_X$ to the existence 
of lifts $\overline{f}$ and this obstruction
lies in the group $H^3(X;\Z_d)$.

Note that the universal obstruction $w\in H^3(\BSO(n);\Z_d)$ (obtained from 
$w_X$ by setting $X=\BSO(n)$ and $f=\id_{\BSO(n)}$) is non-zero, therefore 
not all manifolds can admit $\Spin^c_d$--structures.
To see this, assume the contrary. Then, the fibration 
 $\pi_d:\BSpin^c_d(n)\longrightarrow \BSO(n)$ has a section
 $s$ and the map
 $k\circ s$ lifts $g$ to the contractible 
space $\PK(\Z_d,3)$. Therefore $g$ must be null-homotopic which contradicts
the choice we made for $g$.
 \begin{equation*}
\xymatrix@R=8mm@C=6mm{
 \BSpin^c(\Z_d)\ar[r]^{k}\ar[d]^{\pi_d} \ar[r] & \PK(\Z_d,3) \ar[d]^{p} \\
 \BSO(n)\ar@/^/[u]^{s}\ar[r]^{g} & \K(\Z_d,3)}  \\
\end{equation*}
As a consequence, the universal obstruction $w$ 
is the generator $\beta(w_2)$ of $H^3(\BSO(n);\Z_d)$.

\begin{pro}
 The set $\Spin^c_d(X)$ of $d$--complex spin structures 
 on a manifold $X$ is non-empty, if and only if, 
 $\beta(w_2(X))=0\in H^3(X;\Z_d)$.
 If non-empty, the set $\Spin^c_d(X)$ is affine over $H^2(X;\Z_d)$
 ($w_2(X)$ is the second Stiefel--Whitney class of $X$).
\end{pro}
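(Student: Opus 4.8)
The plan is to work entirely with the lift formulation of Alternative Definition~\ref{lift}: a $d$--complex spin structure on $X$ is a homotopy class of a lift $\overline{f}$ of a classifying map $f:X\to\BSO(n)$ of $\TX$ along the fibration $\pi_d:\BSpin^c_d(n)\to\BSO(n)$. Since $X$ is a compact manifold we may fix a finite CW structure on it and build $\overline{f}$ cell by cell, using that the fiber $F$ of $\pi_d$ is the $1$--connected Eilenberg--MacLane space $\K(\Z_d,2)$, so that all relevant coefficient systems are untwisted. Because $\pi_i(F)=0$ for $i\neq 2$ and $\pi_2(F)=\Z_d$, the obstruction to extending a partial lift over the $k$--skeleton, which lies in $H^k(X;\pi_{k-1}(F))$, vanishes for every $k\neq 3$; thus the existence of $\overline{f}$ is governed by the single class $w_X\in H^3(X;\Z_d)$, which --- being the first potentially nonzero obstruction, with all preceding obstruction groups zero --- is well defined independently of the choices made over lower skeleta and is natural in the pair $(X,f)$.

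For the first assertion I would combine this naturality with the computation of the universal obstruction recorded above. Specializing to $X=\BSO(n)$, $f=\id_{\BSO(n)}$ produces the generator $w=\beta(w_2)\in H^3(\BSO(n);\Z_d)$, so for general $(X,f)$ one obtains $w_X=f^*(w)=f^*\beta(w_2)=\beta(f^*w_2)=\beta(w_2(X))$, using $w_2(X)=f^*w_2$ and that the Bockstein commutes with $f^*$. Hence $\Spin^c_d(X)\neq\emptyset$ if and only if $\beta(w_2(X))=0$ in $H^3(X;\Z_d)$.

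For the affine statement, assume $\Spin^c_d(X)\neq\emptyset$ and fix a base lift $\overline{f}_0$. The difference-cochain construction for the fibration $\pi_d$ attaches to any two lifts $\overline{f}_0,\overline{f}_1$ a class $\delta(\overline{f}_0,\overline{f}_1)\in H^2(X;\pi_2(F))=H^2(X;\Z_d)$ depending only on their homotopy classes of lifts, vanishing exactly when $\overline{f}_0$ and $\overline{f}_1$ are homotopic through lifts, and satisfying $\delta(\overline{f}_0,\overline{f}_1)+\delta(\overline{f}_1,\overline{f}_2)=\delta(\overline{f}_0,\overline{f}_2)$. Conversely, given $c\in H^2(X;\Z_d)$ one modifies $\overline{f}_0$ over the $2$--skeleton by a cocycle representing $c$ to obtain a lift $\overline{f}_1$ with $\delta(\overline{f}_0,\overline{f}_1)=c$; equivalently, $\pi_d$ is up to homotopy a principal $\K(\Z_d,2)$--fibration, so $[X,\K(\Z_d,2)]=H^2(X;\Z_d)$ acts on the set of homotopy classes of lifts. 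This exhibits $\Spin^c_d(X)$ as a torsor over $H^2(X;\Z_d)$, that is, affine over it.

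The only genuine work is the obstruction-theory bookkeeping: checking that $w_X$ is well defined and natural, so that the identification $w_X=\beta(w_2(X))$ is legitimate, and that $\delta$ defines a simply transitive action rather than merely a map. Both are classical once one uses that $F$ is an Eilenberg--MacLane space concentrated in a single degree; the only care required is to carry out the constructions relative to a fixed CW structure on $X$ and then to note that the resulting class $w_X$ and the torsor structure are independent of that choice. I do not anticipate any obstacle beyond this standard input.
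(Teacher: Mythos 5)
Your proposal is correct and follows essentially the same route as the paper: both identify $\Spin^c_d(X)$ with homotopy classes of lifts along $\pi_d$, use naturality of the primary obstruction for the fibration with fiber $\K(\Z_d,2)$ to pull back the universal class $\beta(w_2)$ and commute it with the Bockstein, and appeal to standard obstruction theory (the difference-cochain torsor structure) for the affine statement. You merely spell out the cell-by-cell bookkeeping that the paper leaves implicit.
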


\begin{proof}
 \begin{equation*}
\xymatrix@R=8mm@C=6mm{
              & \BSpin^c_d(n) \ar[d]^{\pi_d} \\
  X \ar[r]^{f} \ar@{.>}[ru]^{\overline{f}}& \BSO(n)} \\
\end{equation*}
Let $f$ be the classifying map of the (stable) tangent bundle of $X$. 
Then, the lifts $\overline{f}$ of
$f$ to $\BSpin^c(\Z_d)$ are in one-to-one correspondence with the sections 
of the bundle 
$f^*(\gamma_{\Spin^c_d(n)})$. 
The obstruction $w_X$ to the existence of such sections is a characteristic
cohomology class so by its naturality property we have
\begin{eqnarray*}
 w_X(f^*(\gamma_{\Spin^c_d(n)}))&=&f^*(w(\gamma_{\Spin^c_d(n)}))\\
                                  &=&f^*(\beta(w_2)).
\end{eqnarray*}
The above relation together with the naturality of Bockstein homomorphisms imply
the result. The second part of the theorem follows by standard arguments of 
obstruction theory.
\end{proof}\subsubsection{Restriction to the boundary}
Let us consider a manifold $X$ with boundary.
Then, any section of $TX|_{\partial X}$ 
transverse to
$\partial X$ and oriented outwards 
gives rise to a homotopy class 
of isomorphisms between the 
oriented vector bundles
 $\R\oplus \emph{T} \partial X$ and $TX|_{\partial X}$.
 Therefore, there is a well-defined restriction map
 $$\Spin^c_d(X)\to \Spin^c_d(\partial X)$$
 affine over the map 
 $H^2(X;\Z_d)\overset{i^*}{\to} H^2(\partial X;\Z_d)$
 induced by the inclusion $i:\partial X\to X$.
\subsubsection{From $\Spin^c$ to $\Spin^c_d$}
We have seen in Section \ref{equiv}
that the fibration $\BSpin^c(n)\overset{\pi}{\rightarrow}\BSO(n)$
is defined as the pull-back under $h$ of the path-space
fibration over $\K(\Z,3)$.
The modulo $d$ restriction morphism $\xi: \Z\to \Z_d$ induces
a map $\K(\Z,3)\overset{\xi_*}{\to} \K(\Z_d,3)$ on the level of Eilenberg-Maclane
spaces and further,
via $\xi_*$, a natural map
$\BSpin^c(n)\to \BSpin^c_d(n)$. As a result, there exists a well-defined natural map
$$\Spin^c(X)\to \Spin^c_d(X)$$ affine
over $H^2(X)\overset{\xi^*}{\to} H^2(X;\Z_d)$ induced by $\xi$.
 \begin{equation*}
\xymatrix@R=6mm@C=3mm{
& & & \BSpin^c(n)\ar[dr]\ar[rr]\ar[dd]^{\pi}&  & \PK(\Z,3)\ar @{->}[dd]|!{[d];[d]}\hole \ar[dr]& \\
 & & & & \BSpin^c_d(n)\ar[dd]^(.65){\pi_d}\ar[rr] &&\PK(\Z_d,3)\ar[dd]\\
 X\ar[rrr]^{f}& & & \BSO(n) \ar @{=}[dr] \ar @{->}[rr]^(.35){h}|!{[r];[r]}\hole& &\K(\Z,3)\ar @{->}[dr]^{\xi_*}&  \\
& & & & \BSO(n) \ar[rr]^{g} & & \K(\Z_d,3)}
\end{equation*}

\subsection{Combinatorial description of $\Spin^c_d$--structures}
\label{combinatorial}
Let $L=(L_1,\cdots, L_n)$ be an oriented framed link in $S^3$
with linking matrix $( L_{ij} )_{i,j=1,n}$ and
denote by $S^3(L)$ the $3$--manifold obtained by surgery. 
The manifold $S^3(L)$ is the boundary of
a $4$--manifold $W_L$,
constructed from a $4$--ball $D^4$ by
attaching $n$ $2$--handles $(D^2\times D^2)_i$ along embeddings 
of $-(S^1\times D^2)_i$ in concordance with the orientation
and framing of each component $L_i$. The $4$--manifold 
$W_L$ is sometimes called \emph{trace of surgery}.

\noindent Let us define the set
$$\mathcal{S}^c_d(L)=\frac{\{(\sigma_1,\dots,\sigma_n)\in 
(\Z_{2d})^n| \sigma_i=L_{ii}\pmod 2 \}}{ 2 \Im L}$$
whose elements will be called \emph{modulo} $d$ \emph{Chern vectors}.

\begin{thm}\label{ScdK}
 There is a canonical bijection
 $$\Spin^c_d(S^3(L))\overset{\phi_L}{\longrightarrow} \mathcal{S}^c_d(L).$$
\end{thm}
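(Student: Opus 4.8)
The plan is to build the bijection $\phi_L$ by using the trace-of-surgery $4$--manifold $W_L$ as a bridge, exactly as in the classical correspondence between $\Spin^c$--structures on $S^3(L)$ and Chern vectors. First I would recall that $W_L$ is simply connected with $H_2(W_L;\Z)\cong\Z^n$ generated by the cores of the $2$--handles (dual to the meridian disks), that the intersection form on $H_2(W_L)$ is the linking matrix $L$, and that $\partial W_L = S^3(L)$. Since $W_L$ is built from handles of index $\le 2$, the restriction map $H^2(W_L;\Z_d)\to H^2(S^3(L);\Z_d)$ fits into the long exact sequence of the pair $(W_L,\partial W_L)$, and by Poincaré--Lefschetz duality one gets the standard presentation $H^2(S^3(L);\Z_d)\cong (\Z_d)^n / (\text{reduction of the image of } L)$, which matches the denominator $2\,\Im L$ appearing in $\mathcal S^c_d(L)$ once one accounts for the $\cdot 2$ twist.

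Next I would set up the map itself. Given $\sigma\in\Spin^c_d(S^3(L))$, I would first show it extends to a $d$--complex spin structure $\tilde\sigma$ on $W_L$: the obstruction to lifting the classifying map of $TW_L$ across $\pi_d\colon\BSpin^c_d(n)\to\BSO(n)$ lies in $H^3(W_L;\Z_d)$, which vanishes since $W_L$ has the homotopy type of a wedge of $2$--spheres; and the obstruction to choosing the extension to agree with $\sigma$ on the boundary lies in $H^3(W_L,\partial W_L;\Z_d)\cong H_1(W_L;\Z_d)=0$, so an extension restricting to $\sigma$ exists. Then I would define $\phi_L(\sigma)$ to be the tuple $(\sigma_1,\dots,\sigma_n)$ recording, for each $2$--handle core class $[S_i]\in H_2(W_L;\Z)$, the pairing of $[S_i]$ with the "mod $2d$ Chern class" of $\tilde\sigma$ — i.e. the image under $H^2(W_L;\Z)\to H^2(W_L;\Z_{2d})$ of $c_1(\det\tilde\sigma)$ evaluated on $[S_i]$, using the Chern map from Section \ref{Chern} and the fact that it is affine over doubling. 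The characteristic-class congruence $c_1(\det\tilde\sigma)\equiv w_2(W_L)\pmod 2$ combined with $\langle w_2(W_L),[S_i]\rangle = L_{ii}\bmod 2$ (Wu formula / self-intersection) gives precisely the constraint $\sigma_i\equiv L_{ii}\pmod 2$, so $\phi_L$ lands in $\mathcal S^c_d(L)$.

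To see that $\phi_L$ is well-defined (independent of the chosen extension $\tilde\sigma$) and injective, I would observe that two extensions of $\sigma$ differ by an element of $\ker\big(H^2(W_L;\Z_d)\to H^2(\partial W_L;\Z_d)\big)$, which by the pair sequence equals the image of $H^2(W_L,\partial W_L;\Z_d)\cong H_2(W_L;\Z_d)$ under the map dual to the intersection form; on Chern vectors this image is exactly $2\,\Im L$ (the factor $2$ from the doubling map underlying the Chern map), so the class of the tuple in the quotient $\mathcal S^c_d(L)$ is unchanged, and conversely two structures $\sigma,\sigma'$ on $S^3(L)$ with $\phi_L(\sigma)=\phi_L(\sigma')$ extend to structures on $W_L$ with the same Chern vector, hence differ by something killed in the boundary, hence $\sigma=\sigma'$. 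Surjectivity is a dimension/counting argument: both $\Spin^c_d(S^3(L))$ (by the Proposition, once $\beta(w_2(S^3(L)))=0$, which holds since every oriented $3$--manifold is parallelizable so $w_2=0$) and $\mathcal S^c_d(L)$ are affine spaces over $H^2(S^3(L);\Z_d)$, and I would check $\phi_L$ is equivariant for these affine structures — the $H^2(\cdot;\Z_d)$--action on Chern vectors being the evident one via the presentation above — so a bijection of affine spaces over the same group that is injective is surjective.

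\textbf{Main obstacle.} The delicate point is the bookkeeping of the factors of $2$: the Chern map is affine over the \emph{doubling} map $H^2\to H^2$, the combinatorial side uses $\Z_{2d}$ with the constraint mod $2$ and quotient by $2\,\Im L$ rather than $\Z_d$ with quotient by $\Im L$, and one must check these match the mod-$d$ reduction of $\Spin^c$ consistently — in particular that the affine action of $H^2(S^3(L);\Z_d)$ on $\mathcal S^c_d(L)$ is genuinely via $\Z_d$ (not $\Z_{2d}$) and is compatible with $\phi_L$. I would handle this by running the argument first for honest $\Spin^c$--structures (recovering Massuyeau's correspondence with integral Chern vectors in $(2\Z+\text{diag})^n/2\Im L \subset \Z^n$) and then reducing mod $d$ via the natural map $\Spin^c(X)\to\Spin^c_d(X)$ established in the subsection "From $\Spin^c$ to $\Spin^c_d$", checking that this reduction is surjective (again an obstruction-theory argument, the obstruction living in $H^3(S^3(L);\Z)\cdot$torsion which need not vanish, so care is needed here — one may instead argue directly on $W_L$ as above, which avoids the integral lifting issue entirely). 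I expect the $W_L$-based argument to be the cleanest route and would present it as the primary proof.
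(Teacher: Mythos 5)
Your proposal is correct and follows essentially the same route as the paper: lift $\sigma$ through the trace of surgery $W_L$ to an honest integral $\Spin^c$--structure (possible because $H^2(W_L;\Z)\cong\Z^n$ is free, so both $\Spin^c(W_L)\to\Spin^c_d(W_L)$ and the restriction to the boundary are surjective), take the integral Chern vector, reduce mod $2d$, and check that the ambiguity in the lift lands in $2\,\Im L$. The one caveat is that in your main construction you briefly attach a class $c_1(\det\tilde\sigma)$ to a mod-$d$ structure $\tilde\sigma$ on $W_L$, which is not defined; you correctly resolve this in your final paragraph by working with honest $\Spin^c$--structures on $W_L$ (exactly what the paper does), and your affine-equivariance argument for surjectivity is just a more explicit version of the paper's equal-cardinality count.
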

\begin{proof}
We have seen in the previous section that $\Spin^c$--structures induce $d$--complex spin structures. In particular,
the map $r_d:\Spin^c(W_L)\longrightarrow \Spin^c_d(W_L)$ is surjective since
it is affine over the surjective map $H^2(W_L;\Z)\longrightarrow H^2(W_L;\Z_d)$ 
induced by restriction modulo $d$ of coefficients and
similarly, the restriction map 
$r:\Spin^c_d(W_L)\longrightarrow \Spin^c_d(S^3(L))$ is surjective 
since it is affine over the surjective map 
$H^2(W_L;\Z_d)\longrightarrow H^2(S^3(L);\Z_d)$ induced by inclusion. 

With the help of $r$ and $r_d$, we define the map $\phi_L:\Spin^c_d(S^3(L))\longrightarrow \mathcal{S}^c_d(L)$ as follows:
any $\sigma\in \Spin^c_d(S^3(L))$ is the image of an
element $\tilde{\sigma}\in \Spin^c(W_L)$ under the composition
$r\circ r_d$. The value $c(\tilde{\sigma})\in H^2(W_L;\Z)\cong \Z^n$ 
is characteristic for $L$ (see \cite{Massuyeau2}), therefore $c(\tilde{\sigma})=(c_1,\dots,c_n)\in \Z^n$
with $c_i=L_{ii} \pmod 2$. We set $\phi_L(\sigma)$ to be the image of
$c(\tilde{\sigma})$ $\pmod {2d} $ in $\mathcal{S}^c_d(L)$. 
\begin{itemize}
 \item $\phi_L(\sigma)$ is well-defined:
 let us assume that $(r\circ r_d)^{-1}(\sigma)$ contains
 two different elements $\tilde{\sigma_1}$ and $\tilde{\sigma_2}$.
 Then, they differ by $y\in H^2(W_L)\cong \Z^n$ whose modulo $d$
 reduction belongs to $\Im L$ in $(\Z_d)^n$.
 Since $2y$ $\pmod {2d}$ belongs to $2\Im L$ in $(\Z_{2d})^n$, we get that
 $[c(\tilde{\sigma_2})]=[c(\tilde{\sigma_1})+2y]=[c(\tilde{\sigma_1})]$
 (the first equality follows from the fact that the Chern map is affine over the doubling map).

 \item $\phi_L$ is injective: let us assume that $\phi_L(\sigma_1)=\phi_L(\sigma_2)$.
 Then, the preimages $\tilde{\sigma_1}$ and $\tilde{\sigma_2}$  (of $\sigma_1$ and
 $\sigma_2$, respectively) under
 $r\circ r_d$ differ by $y\in H^2(W_L)\cong \Z^n$ such
 that $2y$ $\pmod{2d}$ belongs to $2\Im L$ in $(\Z_{2d})^n$. This implies
 that $y$ $\pmod d$ belongs to $\Im L$ in $(\Z_d)^n$ and therefore 
 $\sigma_1=\sigma_2$.
\end{itemize}
The sets $\Spin^c_d(S^3(L))$ and $\mathcal{S}^c_d(L)$ have
the same cardinality, hence $\phi_L$ is bijective.
\end{proof}

\noindent From now on, we will reffer to the set $\mathcal{S}^c_d(L)$ as the combinatorial 
description of $d$--complex spin structures on the surgered manifold $S^3(L)$.

\subsection{$\Spin^c_d$ Kirby moves} 
A celebrated theorem of Kirby \cite{Kirby} states that two
(oriented) framed links in $S^3$ produce the same manifold
by surgery, if and only if, they are related 
by a finite sequence of local geometric transformations called 
\emph{Kirby moves}.
In what follows, we present a refined version of the original 
Kirby theorem for manifolds equipped with $\Spin^c_d$--structures.

\begin{figure}[htb]
\begin{center}
\includegraphics{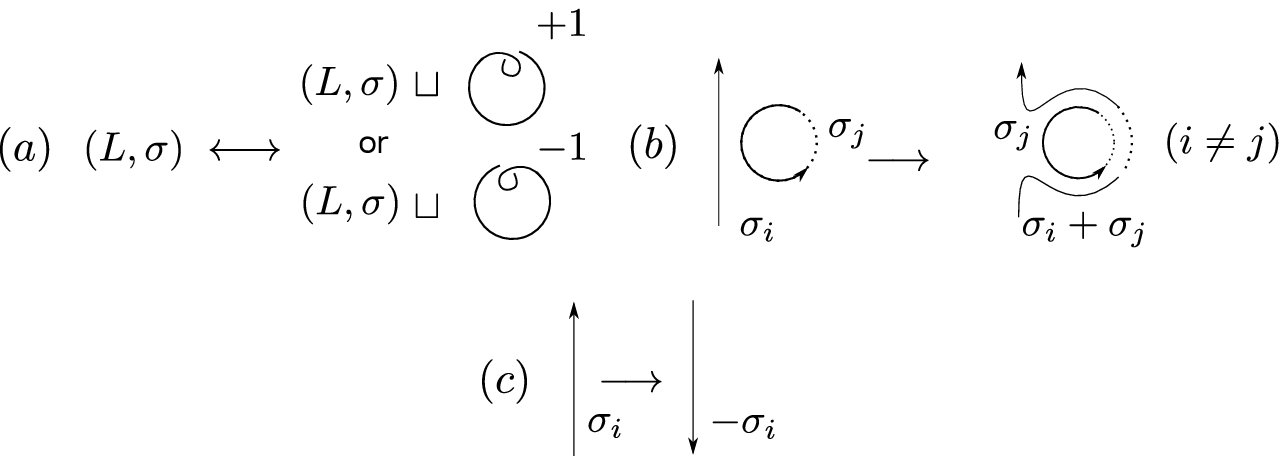}
\caption{Refined $\Spin^c_d$--Kirby moves (a) Stabilization (b) Handle slide (c) Orientation reversal. Note that 
we use the blackboard framing and the labels refer to modulo $d$ Chern vectors.}
\label{k1}
\end{center}
\end{figure}
\begin{thm}\label{Kirby}
 Let $(L,\sigma)$ and $(L',\sigma')$ be two oriented framed links
 with Chern vectors $\sigma$, $\sigma'$. 
 Then, the manifolds $(S^3(L),\sigma)$ and $(S^3(L'),\sigma')$ 
 are $\Spin^c_d$--homeomorphic, if and only if,
 the pairs $(L,\sigma)$ and $(L',\sigma')$ are related
 by a finite sequence of the moves in Figure \ref{k1} or their inverses.

\end{thm}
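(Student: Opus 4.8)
The plan is to reduce the $\Spin^c_d$--refined statement to the classical Kirby theorem by tracking $\Spin^c_d$--structures through the two Kirby moves. The overall strategy follows the standard pattern for refined Kirby calculus (as in the spin and cohomological cases): the ``if'' direction is a verification, and the ``only if'' direction is a bookkeeping argument built on top of the unrefined theorem.

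First I would treat the ``if'' direction. By Theorem \ref{ScdK}, a $\Spin^c_d$--structure on $S^3(L)$ is encoded by a modulo $d$ Chern vector, i.e.\ a class in $\mathcal{S}^c_d(L)$. For the stabilization move (a), adding a disjoint $\pm 1$--framed unknot changes $W_L$ by a connected sum with $\pm\mathbb{CP}^2$, whose $H^2$ gains a $\Z$ summand on which the intersection form is $(\pm 1)$; the characteristic vectors gain a coordinate equal to $\pm 1 \pmod 2$, and since the new basis vector is already in $\Im L$ up to sign, any odd value is identified, so the labels in Figure \ref{k1}(a) are forced. For the handle slide (b), $L' $ is obtained from $L$ by replacing $L_i$ with $L_i \pm L_j$ (band sum), which corresponds to a change of basis of $H^2(W_L)$; I would check that the prescribed transformation of the Chern vector in Figure \ref{k1}(b) is exactly the image of a fixed $\tilde\sigma \in \Spin^c(W_L)$ under this change of basis, reduced mod $2d$ -- this is the affine-over-the-doubling-map computation already used in the proof of Theorem \ref{ScdK}. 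For orientation reversal (c), reversing $L_i$ negates the $i$th row and column of $L$ and negates the $i$th Chern coordinate, again compatible with the labels. In each case the point is that the trace $4$--manifolds $W_L$ and $W_{L'}$ are related by a handle move and hence carry compatible $\Spin^c$--structures restricting to the same structure on the common boundary.

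For the ``only if'' direction I would argue as follows. Suppose $(S^3(L),\sigma)$ and $(S^3(L'),\sigma')$ are $\Spin^c_d$--homeomorphic. Forgetting the $\Spin^c_d$--structure, the classical Kirby theorem gives a finite sequence of (ordinary) Kirby moves from $L$ to $L'$. I would promote this to a sequence of \emph{refined} moves by the following device: fix $\tilde\sigma \in \Spin^c(W_L)$ lifting $\sigma$ (possible by surjectivity of $r\circ r_d$, as in Theorem \ref{ScdK}), perform the sequence of moves, and transport $\tilde\sigma$ along using the $\Spin^c$--Kirby calculus for $4$--manifolds; at the end of the sequence one obtains some $\tilde\sigma'' \in \Spin^c(W_{L'})$ whose mod $d$ Chern vector is $\phi_{L'}(\sigma')$ \emph{up to} an element of $2\Im L'$ -- because both $\tilde\sigma''$ and any chosen lift of $\sigma'$ restrict to $\Spin^c_d$--structures on $S^3(L')$ that differ by an element of $H^2(S^3(L');\Z_d) = (\Z_d)^n / \Im L'$, which lifts to $2\Im L'$ in $(\Z_{2d})^n$. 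Finally I would observe that moves of type (b) with appropriate labels realize exactly the addition of any element of $2\Im L'$ to the Chern vector while fixing $L'$ (sliding $L_i$ over $L_j$ and back, or sliding a component over a $0$--framed unknot), so the residual discrepancy can be absorbed by a few extra refined handle slides.

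The main obstacle I expect is the bookkeeping in the ``only if'' direction: one must be careful that the lift $\tilde\sigma$ of $\sigma$ chosen at the start, once dragged through the entire move sequence, does not over- or under-count, and in particular that the ambiguity is \emph{exactly} $2\Im L'$ and not something coarser. This requires knowing precisely how $\Spin^c$--structures on the trace $4$--manifold behave under handle slides and (de)stabilizations -- essentially Massuyeau's analysis in \cite{Massuyeau2} -- and then passing to the mod $d$ quotient compatibly. A secondary technical point is the stabilization move for even $d$ versus odd $d$: the parity constraint $\sigma_i \equiv L_{ii} \pmod 2$ interacts with the mod $2d$ reduction, so I would double-check that the label on the new unknot in Figure \ref{k1}(a) is the unique admissible one modulo the $2\Im L$ relation. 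Apart from these points, everything is a direct translation of classical Kirby calculus into the affine-over-$H^2(\,\cdot\,;\Z_d)$ language set up in Sections \ref{combinatorial} and the proof of Theorem \ref{ScdK}.
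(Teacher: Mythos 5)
Your ``if'' direction is essentially the paper's entire proof: the authors also reduce everything to checking that under each of the three moves the Chern vectors transform as in Figure \ref{k1}, using the surjectivity of $r\circ r_d$, the commutative square relating $\Spin^c(W_L)$ to $\Spin^c_d(S^3(L))$, and Massuyeau's known description of how integral characteristic vectors change under Kirby moves. That part of your proposal is sound and matches the paper.

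The gap is in your ``only if'' direction, at the step where you claim the transported structure $\tilde\sigma''$ has mod $d$ Chern vector equal to $\phi_{L'}(\sigma')$ \emph{up to} an element of $2\Im L'$. Your justification --- that the two restrictions to $S^3(L')$ ``differ by an element of $H^2(S^3(L');\Z_d)$, which lifts to $2\Im L'$'' --- is not correct: \emph{any} two $\Spin^c_d$--structures differ by an element of $H^2(S^3(L');\Z_d)\cong(\Z_d)^n/\Im L'$, and by Theorem \ref{ScdK} the map $\phi_{L'}$ is a \emph{bijection}, so distinct structures have Chern vectors lying in distinct classes of $\mathcal{S}^c_d(L')$. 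The elements that can be absorbed by handle slides are exactly those of $\Im L'\subset(\Z_d)^n$ (equivalently $2\Im L'$ at the level of vectors mod $2d$), i.e.\ precisely the ambiguity that acts \emph{trivially} on structures --- not all of $H^2$. What you actually need is that the homeomorphism $g:S^3(L)\to S^3(L')$ induced by the chosen sequence of unrefined moves satisfies $g_*\sigma=\sigma'$; otherwise $g_*\sigma$ is some other structure and no amount of sliding will fix it. This requires the strong form of Kirby's theorem (any orientation-preserving homeomorphism between the surgered manifolds is, up to isotopy, induced by a sequence of moves), so that the move sequence can be chosen to realize the \emph{given} $\Spin^c_d$--homeomorphism; with that in hand the residual discrepancy is zero and your absorption step becomes unnecessary. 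As written, the argument is circular: it implicitly assumes that two structures on the same $L'$ related by a self-homeomorphism are related by refined moves, which is an instance of the statement being proved.
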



\begin{proof}
 We must
 check that for any of the Kirby moves (stabilization, handle slide and
 orientation reversal) $L\overset{K}{\rightarrow} L'$,
 the Chern vectors change under the homeomorphism 
 $S^3(L)\rightarrow S^3(L')$ in the way described by Figure \ref{k1}.
 To do this, note that the map $r\circ r_d:\Spin^c(W_L)\to 
 \Spin^c_d(S^3(L))$ defined in the proof of Theorem \ref{ScdK} is surjective and the 
 following diagram commutes:
\begin{equation*}
\xymatrix@R=8mm@C=5mm{
 \Spin^c(W_L)\ar[d]^{r\circ r_d} \ar[rr]^{K}& & \Spin^c(W_{L'}) \ar[d]^ {r'\circ r'_d}\\
 \Spin^c_d(S^3(L))\ar[rr]^{K} & & \Spin^c_d(S^3(L')). \\
}
\end{equation*}
\noindent Since $\Spin^c$--structures on $W_L$ are combinatorially described as 
 the elements of the set $$\{(c_1,\cdots, c_n)\in \Z^n\, |\,  c_i=L_{ii} \pmod 2\}$$ and their change
 under Kirby moves is known (\cite{Massuyeau2}),
 the conclusion follows.
\end{proof}

\section{Generalized spin structures}

In \cite{Bl1} the second author introduced spin structures 
modulo an even integer 
$d$ (short $\Spin_d$--structures). In dimension three,
he gave a combinatorial description of such structures
as well as a refined set of Kirby moves. In this section
we recall his results using the notations of Section \ref{combinatorial}
and extend the definition to a possibly non cyclic group of coefficient $K$
with distinguished order $2$ element $v\in K$.

\subsection{Combinatorial description of $\Spin_d$--structures}
Consider the set
 $$\mathcal{S}_d(L)=\{(s_1,\dots, s_n)\in (\Z_d)^n \;|\;
 \sum^n_{j=1}L_{ij} s_j=\frac{d}{2} L_{ii} \pmod d \}.$$
 The elements of $\mathcal{S}_d(L)$ are called \emph{modulo} $d$
\emph{characteristic solutions of} $L$. 

 \begin{lem}
  There is a canonical bijection $$\Spin_d(S^3(L))\overset{\psi_L}{\longrightarrow} \mathcal{S}_d(L).$$
 \end{lem}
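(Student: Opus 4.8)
The plan is to mimic, in the modulo-$d$ spin setting, the argument just given for the $\Spin^c_d$ case (Theorem \ref{ScdK}), replacing the Chern vector of a $\Spin^c$-structure on $W_L$ by the ordinary characteristic covector of a spin structure on $W_L$. First I would recall from \cite{Bl1} that a $\Spin_d$-structure on $S^3(L)$ is obtained by reduction from a genuine spin structure on the trace $W_L$ of the surgery, and that the relevant restriction map $\Spin(W_L)\to \Spin_d(S^3(L))$ is onto. This surjectivity comes, as before, from two affine surjections: the reduction $\Spin(W_L)\to\Spin_d(W_L)$ over the surjection $H^1(W_L;\Z_2)\to H^1(W_L;\Z_d)$ (more precisely over the appropriate coefficient map governing $\Spin_d$), and the restriction $\Spin_d(W_L)\to\Spin_d(S^3(L))$ over $H^1(W_L;\Z_d)\to H^1(S^3(L);\Z_d)$, which is surjective because $W_L$ is simply connected so that $H^1$ terms vanish and the sequence degenerates appropriately — in fact the cleanest route is to use $H^2$-valued obstructions, exactly parallel to the diagram chase in the proof of Theorem \ref{ScdK}.

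Next I would define $\psi_L$ explicitly. A spin structure on $W_L$ corresponds to a characteristic covector $(s_1,\dots,s_n)\in\Z^n$, i.e. one with $s_i\equiv L_{ii}\pmod 2$; this is classical (Kaplan, or \cite{Bl1}). Given $\sigma\in\Spin_d(S^3(L))$ I lift it to a spin structure on $W_L$, read off its characteristic covector $(s_1,\dots,s_n)$, and reduce modulo $d$; I set $\psi_L(\sigma)$ to be the class of this vector. One checks it lands in $\mathcal S_d(L)$: the characteristic condition $s_i\equiv L_{ii}\pmod 2$ together with the relation coming from the fact that $\sigma$ extends over $W_L$ forces $\sum_j L_{ij}s_j\equiv \tfrac d2 L_{ii}\pmod d$ — this is the algebraic shadow of the boundary restriction being well-defined, and it is the point where the factor $\tfrac d2$ enters, via the even integer $d$ and the relation $d = 2\cdot\tfrac d2$. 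Well-definedness of $\psi_L$ (independence of the choice of lift) and injectivity then follow from tracking the ambiguity: two lifts differ by an element $y\in H^1(W_L;\Z)\cong\Z^n$ whose image lies in the image of $L$ modulo $d$, and such a change does not alter the class in $\mathcal S_d(L)$; conversely equal images force the difference to lie in $\Im L$ mod $d$. Finally, a cardinality count — $\Spin_d(S^3(L))$ is affine over $H^1(S^3(L);\Z_d)$ and $\mathcal S_d(L)$ has the matching number of elements, both being governed by the cokernel and kernel of $L$ mod $d$ — upgrades the surjection (or the injection) to a bijection.

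The main obstacle I anticipate is bookkeeping the coefficient sequences correctly: a $\Spin_d$-structure is not literally a reduction mod $d$ of a $\Z_2$-structure but is controlled by the Bockstein-type map attached to $0\to\Z_d\to\Z_{2d}\to\Z_2\to 0$ (just as $\Spin^c_d$ was), so one must be careful that "lift to a spin structure on $W_L$ and reduce" is the correct geometric operation and that it is compatible with the affine structures over $H^1(-;\Z_2)$ and $H^1(-;\Z_d)$. Once that compatibility is pinned down, the rest is the same two-bullet diagram chase plus counting argument as in Theorem \ref{ScdK}, and in fact the lemma is essentially a restatement of the corresponding result in \cite{Bl1}, so I would also simply cite \cite{Bl1} for the construction of $\psi_L$ and limit the new content to verifying it is phrased consistently with the conventions of Section \ref{combinatorial}.
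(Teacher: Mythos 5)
Your plan transplants the proof of Theorem \ref{ScdK} by lifting $\sigma$ to a structure on the trace of surgery $W_L$, but this is exactly where the analogy with the $\Spin^c_d$ case breaks down, and the gap is fatal. The manifold $W_L$ is simply connected, so $H^1(W_L;\Z_2)=H^1(W_L;\Z_d)=0$: it carries at most one spin structure (and carries one only when the linking matrix is even, i.e.\ all $L_{ii}\equiv 0 \pmod 2$), and likewise at most one $\Spin_d$--structure. Hence the map $\Spin(W_L)\to\Spin_d(S^3(L))$ you want to use is not surjective --- its source is generically empty, and even when nonempty it is a single point, so ``lift and read off a characteristic covector'' cannot distinguish the various $\Spin_d$--structures on the boundary. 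The reason the lifting strategy works for $\Spin^c_d$ is that $\Spin^c$--structures are affine over $H^2$, and $H^2(W_L;\Z)\cong\Z^n$ is large; $\Spin$-- and $\Spin_d$--structures are affine over $H^1$, which vanishes for $W_L$. Your proposed ``affine surjection over $H^1(W_L;\Z_2)\to H^1(W_L;\Z_d)$'' is a map between one-point sets at best.

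The paper's proof goes the opposite way: instead of lifting $\sigma$ into $W_L$, it takes the \emph{relative obstruction} $w(W_L;\sigma)\in H^2(W_L,S^3(L);\Z_d)$ to extending $\sigma$ over $W_L$. This class always exists, the group $H^2(W_L,S^3(L);\Z_d)\cong(\Z_d)^n$ is free of rank $n$, and its coefficients define $\psi_L$. The characteristic equation $\sum_j L_{ij}s_j=\tfrac d2 L_{ii}\pmod d$ then falls out of the identity $\langle w(W_L;\sigma),[x]_d\rangle=\tfrac d2\, x\cdot x \pmod d$, which is the image of the defining property of $w_2(W_L)$ under the inclusion $\eta:\Z_2\hookrightarrow\Z_d$, $1\mapsto d/2$; injectivity and the identification of the image with $\mathcal{S}_d(L)$ come from the exact sequence of the pair (the map $H^2(W_L,S^3(L);\Z_d)\to H^2(W_L;\Z_d)$ is the linking matrix, so the boundary restriction identifies $\Spin_d(S^3(L))$, affine over $\ker L\bmod d$, with the solution set of the inhomogeneous system). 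If you want to salvage your write-up, replace the lifting step by this relative-obstruction construction; the well-definedness and counting arguments you sketch at the end are then essentially unnecessary, since $\psi_L$ is defined directly on $\sigma$ with no choices made.
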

\begin{proof}
 Given a $\Spin_d$--structure $\sigma$ on
 $S^3(L)$ it can be extended to $W_L$ if and
 only if a certain cohomology class 
 in $H^2(W_L,S^3(L);\Z_d)$ vanishes.
 We denote this class by $w(W_L;\sigma)$ and we 
 call it relative
 obstruction. 
 To any $\sigma\in \Spin_d(S^3(L))$ there
 is associated a relative obstruction 
 $w(W_L;\sigma)$ in $H^2(W_L, S^3(L);\Z_d)$.
 Since the group $H^2(W_L, S^3(L);\Z_d)$ is free of
 rank $n$, taking the coefficients of
 the relative obstruction we obtain a map
 $\psi_L:\Spin_d(S^3(L))\longrightarrow (\Z_d)^n$.
 We will show that this map is injective and its image 
 coincides with $\mathcal{S}_d(L)$. Let us consider 
 the embedding $\eta:\Z_2\hookrightarrow \Z_d$. Then,
 the relative obstruction $w(W_L;\sigma)=\eta_*(w_2(W_L))$,
 where $\eta_*$ is the induced map on the level of cohomology.
 Given an integral $2$--cycle $x$, we denote by $x\cdot x$ its
 self-intersection number and by $[x]_m$ its modulo $m$ restriction.
 The second Stiefel--Whitney class $w_2(W_L)$ is defined by the 
 following equation:
 $$\la w_2(W_L), [x]_2\ra=x\cdot x \pmod 2, \forall x$$
 therefore, the relative obstruction is defined
 by $$\la w(W_L;\sigma), [x]_d\ra=\frac{d}{2} x\cdot x \pmod d, \forall x.$$
 Using functoriality and writing $w(W_L;\sigma)$ in the preferred basis of $H^2(W_L;\Z_d)$,
the result follows.
\end{proof}
\noindent The set $\mathcal{S}_d(L)$ 
will be referred to as a combinatorial 
description of $\Spin_d$--structures on
the surgered manifold $S^3(L)$.

 \subsection{ $(K,v)$-spin structures}\label{kv}
 Let $K$ be a finite abelian group and $v$ a non trivial element in the $2$-torsion subgroup $\mathrm{Tor}_2(K)$, then we can define $(K,v)$ generalized spin structures.
\begin{dfn}
For $n\geq 3$, an  $(K,v)$ generalized spin structure on an $SO(n)$ principal bundle $SO(n)\hookrightarrow P\rightarrow M$ is a cohomology
class $\sigma\in H^1(P,K)$ whose restriction to the fiber is equal to $v$ in $H^1(SO(n),K)\cong \mathrm{Tor}_2(K)$.
 A $(K,v)$ generalized spin structure on an oriented manifold 
 of  dimension $\geq 3$ is a $(K,v)$
 spin structure on its oriented framed bundle.
\end{dfn} 

As usual this definition can be extended to dimensions less than 3 by using stabilization.

If $K$ is decomposed as a product of cyclic groups, $
K=\Z_{d_1}\times \dots \times \Z_{d_k}$, then a  $(K,v)$ generalized spin structure is a sequence $(\sigma_1,\dots, \sigma_k)$ where $\sigma_j$ is either a $d_j$--spin 
structure or a mod $d_j$ cohomology class,
depending on the twist coefficient of the corresponding generator.

\subsection{$\Spin_d$ Kirby moves}\label{spinKirby}
 \begin{figure}[h!]
\begin{center}
\includegraphics{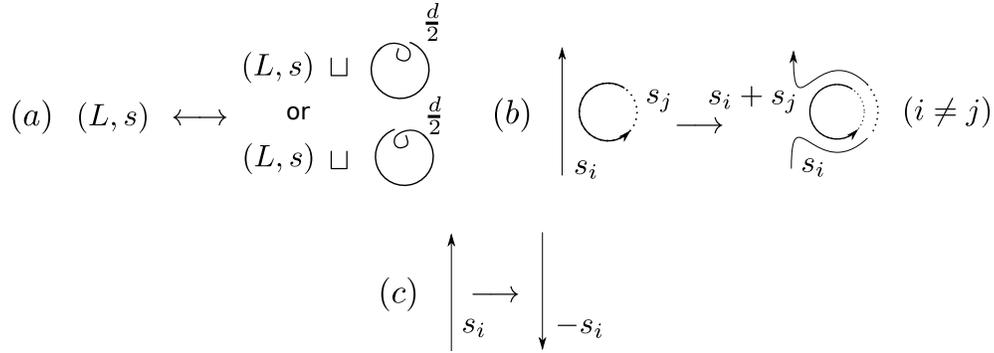}
\end{center}
\caption{\label{k2}Refined $\Spin_d$--Kirby moves (a) Stabilization (b) Handle slide (c) Orientation reversal.
Note that we use the blackboard framing and the labels
refer to modulo $d$ characteristic solutions of $L$.}
\end{figure}
The second author proved the following result:
\begin{thm} \cite{Bl1}
 Let $(L,s)$ and $(L',s')$ be two oriented framed
 links with characteristic solutions $s,s'$. Then, the
 manifolds $(S^3(L), s)$ and $(S^3(L'), s')$ are
 $\Spin_d$--homeomorphic, if and only if, the pairs
 $(L,s)$ and $(L',s')$ are related by a finite sequence
 of the moves in Figure \ref{k2} or their inverses.
 \end{thm}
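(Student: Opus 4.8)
The plan is to promote Kirby's theorem \cite{Kirby} to the decorated setting by tracking a $\Spin_d$--structure through each elementary move, using the bijection $\psi_L\colon \Spin_d(S^3(L))\to \mathcal S_d(L)$ of the previous lemma; the argument runs parallel to the proof of Theorem \ref{Kirby}. For the ``if'' direction, each move on the underlying links already produces a canonical homeomorphism $S^3(L)\to S^3(L')$, and what must be checked is that the induced bijection $\Spin_d(S^3(L))\to \Spin_d(S^3(L'))$, read through $\psi_L$ and $\psi_{L'}$, is the relabelling of Figure \ref{k2}. For a stabilization one adjoins a split $\pm1$--framed unknot; the linking matrix becomes $L\oplus(\pm 1)$ and the defining congruence $\sum_j L_{ij}s_j\equiv \tfrac{d}{2} L_{ii}\pmod d$ forces the new coordinate to equal $\tfrac{d}{2}$, while $S^3(L\sqcup U)\cong S^3(L)$ leaves the rest of the characteristic solution untouched. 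For a handle slide $L_i\mapsto L_i\pm L_j$ the linking matrix changes by a congruence $L\mapsto PLP^{t}$ with $P$ elementary, and $W_L\cong W_{L'}$; naturality of the relative obstruction $w(W_L;\sigma)$ under this diffeomorphism, together with the linear algebra already implicit in $\psi_L$, gives $s_k\mapsto s_k$ for $k\neq j$ and $s_j\mapsto s_j\mp s_i$. For an orientation reversal of $L_i$ the matrix is conjugated by $\diag(1,\dots,-1,\dots,1)$ and, again by naturality, $s_i\mapsto -s_i$. These are the routine computations.

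For the ``only if'' direction, suppose $\phi\colon (S^3(L),s)\to(S^3(L'),s')$ is a $\Spin_d$--homeomorphism. Kirby's theorem connects $L$ to $L'$ by a sequence of moves, and transporting $s$ along this sequence by the rules above produces a characteristic solution $s''\in \mathcal S_d(L')$ with $(S^3(L'),s'')$ $\Spin_d$--homeomorphic to $(S^3(L),s)$, hence to $(S^3(L'),s')$. It remains to identify $s''$ and $s'$: they differ by an element of $H^1(S^3(L');\Z_d)\cong\operatorname{coker}(L'\colon \Z_d^{\,n}\to\Z_d^{\,n})$, and one must insert further moves that absorb this difference. The key point is that sliding a component over another and then back, or over an auxiliary $\pm1$--framed unknot created by a stabilization that is afterwards destabilized, returns the link to $L'$ up to isotopy but shifts the characteristic solution by a prescribed element of $\Im L'$; running over all such loops realizes all of $\Im L'$, i.e. all of $H^1(S^3(L');\Z_d)$. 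A cardinality count --- $|\mathcal S_d(L)|$ equals $|H^1(S^3(L);\Z_d)|$ whenever $\mathcal S_d(L)\neq\emptyset$, since $\psi_L$ is a bijection and $\Spin_d(S^3(L))$ is affine over $H^1(S^3(L);\Z_d)$ --- then shows these shifts act transitively on $\mathcal S_d(L')$ modulo self-homeomorphisms, forcing $s''=s'$.

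The main obstacle is exactly this last transitivity step. A single handle slide genuinely changes the link, so one cannot simply ``move the decoration'' in place; one has to exhibit explicit closed loops in the Kirby graph whose net effect on the link is trivial while their net effect on the characteristic solution is an arbitrary element of $\Im L'$, and check that these are compatible with the orientation-reversal move and with the stabilization/destabilization used to generate them. Organizing this bookkeeping so that it is manifestly complete --- rather than merely producing some sublattice of $\Im L'$ --- is the delicate part; the remaining ingredients are naturality of $w_2$ and its Bockstein-type refinement and the linear algebra already underlying the bijection $\psi_L$.
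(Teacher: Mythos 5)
The paper does not actually prove this statement; it is imported verbatim from \cite{Bl1}, and the only template available in the text is the proof of the $\Spin^c_d$ analogue (Theorem \ref{Kirby}), which works entirely upstairs on the trace of surgery: one lifts to $\Spin^c(W_L)$ via the surjection $r\circ r_d$, where the effect of Kirby moves is already known from \cite{Massuyeau2}, and pushes back down. Your ``if'' direction is consistent with that template and is essentially correct: the stabilization congruence forces the new coordinate to be $d/2$, and the handle-slide and orientation-reversal formulas follow from naturality of the relative obstruction $w(W_L;\sigma)$ under the corresponding handle moves on $W_L$.

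The ``only if'' direction, however, contains a genuine gap, and the mechanism you propose cannot close it. First, the bookkeeping is off: $\mathcal S_d(L')$ is a torsor over $\ker(L'\colon\Z_d^{\,n}\to\Z_d^{\,n})\cong H^1(S^3(L');\Z_d)$, not over $\operatorname{coker}(L')$, and translating a characteristic solution by an element of $\Im L'$ generally leaves the set $\mathcal S_d(L')$ altogether, since it changes $L's$ by $L'y\neq 0$; so ``realizing all of $\Im L'$'' is neither possible inside $\mathcal S_d(L')$ nor the group you need to realize. (You appear to be conflating this model with the $\Spin^c_d$ one, where $\mathcal S^c_d(L)$ really is a quotient by $2\Im L$.) Second, and more seriously, if loops in the Kirby graph acted transitively on $\mathcal S_d(L')$, then combined with your ``if'' direction any two $\Spin_d$--structures on a fixed manifold would be $\Spin_d$--homeomorphic, which is false in general; so the loop action must realize exactly the orbit of the self-homeomorphism group, and producing that by explicit loops is circular --- it amounts to the theorem itself applied to $L'=L'$. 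The standard resolution, and the one underlying \cite{Bl1}, avoids transitivity entirely: the strong form of Kirby's theorem, obtained from the closed $4$--manifold $W_L\cup_h(-W_{L'})$ and a handle decomposition of a bounding $5$--manifold, yields a sequence of moves whose induced homeomorphism is isotopic to the \emph{given} $h$, so that transporting $s$ through the moves lands exactly on $h_*(s)=s'$ and there is no residual discrepancy to absorb. You should replace the transitivity step by an appeal to this refinement of Kirby's theorem.
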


\section{From ribbon to modular categories}
\subsection{Ribbon categories}\label{ribcat}
We introduce basic notions about ribbon categories following 
\cite{Tbook}. Without loss of generality, we work
with monoidal categories that are strict (according to Mac Lane's coherence theorem \cite{Maclane}, every
monoidal category is equivalent to a strict one), i.e, categories with tensor
product and unit object $\1$ satisfying
\begin{eqnarray*}
 V\otimes \1=\1\otimes V=V\\
 (U\otimes V)\otimes W=U\otimes (V\otimes W)\\
 f\otimes \id_\1=\id_\1\otimes f=f\\
 (f\otimes g)\otimes h=f\otimes (g\otimes h)
\end{eqnarray*}
for any objects $U,V,W$ and morphisms $f,g,h$ of the category.

 A \emph{braiding} in a monoidal category $\C$
 is a family of natural isomorphisms 
 $b=\{b_{V,W}:V\otimes W\to W\otimes V\}$
 where $V,W$ run over all the objects of $\C$,
 such that $b_{U,V\otimes W}=(\id_V\otimes b_{U,W})(b_{U,V}\otimes \id_W)$
 and $b_{U\otimes V,W}=(b_{U,W}\otimes \id_V)(\id_U\otimes b_{V,W})$.

Let us consider the category $(\Vect_\k,\otimes)$ of finite dimensional
vector spaces over 
a field $\k$. The family of
maps $b_{V,W}:V\otimes W\to W\otimes V, \ b_{V,W}(v\otimes w)=w\otimes v$ for all $v\in V, w\in W$
defines a braiding.
 
A \emph{twist} in a monoidal category $\C$ with braiding $b$ is a family of
 natural isomorphisms $\theta=\{\theta_V:V\to V\}$ where $V$ runs over all the objects 
 of $\C$, such that for any two objects $V,W$ of $\C$, we have
 $\theta_{V\otimes W}=b_{W,V}b_{V,W}(\theta_V\otimes \theta_W).$

In the category $(\Vect_\k,\otimes,b)$,
the family $\{\theta_V\}_V$ of maps $\theta_V:V\to V, \theta_V=\id_V$
defines a twist.

Let $\C$ be a monoidal category. Assume that to each 
 object $V$ of $\C$ there are associated an object 
 $V^*$ and two morphisms $c_V:\1\to V\otimes V^*,\  d_V:V^*\otimes V\to \1$.
 The rule $V\mapsto (V^*,c_V,d_V)$ is called \emph{duality} in $\C$ if $(\id_V\otimes d_V)(c_V\otimes \id_V)=\id_V$
 and $(d_V\otimes \id_{V^*})(\id_{V^*}\otimes c_V)=\id_{V^*}$ for all objects $V$ of $\C$.
 The duality is called \emph{compatible} with the braiding $b$ and the twist $\theta$
 if in addition, for any object $V$ of $\C$ we have
 $(\theta_V\otimes \id_{V^*})c_V=(\id_V\otimes \theta_{V^*})c_V.$

For example, in $(\Vect_\k,\otimes)$,  if we set $V^*:=\Hom_\k(V,\k)$, fix a basis $\{e_i(V)\}_{i=1,\dots,n}$ of $V$ 
with $n= \dim(V)$, 
and  denote by $\{e_i(V)^*\}_{i=1,\dots,n}$ its dual,
we can define the map $c_V:\k\to V\otimes V^*$ by  $c_V(1)=\sum_{i=1}^{n}e_i(V)\otimes e_i(V)^*$ and
$d_V:V^*\otimes V\to \k$ by $d_V(f,v)=f(v)$ then, the rule $V\mapsto (V^*,c_V,d_V)$
defines a duality in $\Vect_\k$, compatible
with the braiding $b$ and twist $\theta$ defined above. 
\begin{dfn}
A monoidal category $\C$ equipped 
 with braiding $b$, twist $\theta$ and a compatible duality
 $(*,c,d)$ is called \emph{ribbon}.
 \end{dfn}
 \begin{exa}
 \begin{enumerate}
 \item The category $(\Vect_\k,\otimes, b, \theta, (*,c,d))$ defined above
 is ribbon;
 \item Let ${\bf U}_q(\mathfrak g)$ be the quantum group corresponding to a 
semi-simple Lie algebra $\mathfrak g$, then the category of its finite 
dimensional representations
$\Rep({\bf U}_q(\mathfrak g))$ is ribbon \cite{Tbook}. 
\end{enumerate}
\end{exa}
\noindent In ribbon categories we can
define traces of morphisms and dimensions of objects as follows.
\begin{dfn}\label{qtrace}
 \begin{enumerate}[(1)]
  \item The \emph{quantum trace} of an endomorphism $f:V\to V$
  is the composition $\tr(f):=d_Vb_{V,V^*}((\theta_Vf)\otimes \id_{V^*})c_V:\1\to \1;$
  \item The \emph{quantum dimension} of an object $V$ is 
  $\la V \ra:=\tr(\id_V).$
 \end{enumerate}
\end{dfn}

\begin{pro}{\rm In any ribbon category $\C$
\begin{enumerate}[(a)]
\item $\tr(fg)=\tr(gf)$
for any morphisms $f\in \Hom(V,W)$ and $g\in \Hom(W,V)$;

\item $\tr(f\otimes g)=\tr(f)\tr(g)$ for any
morphisms $f\in \Hom(V,W)$ and $g\in \Hom(V',W')$;

\item $\langle V\otimes W \rangle=\langle V\rangle\langle W\rangle$ 
for any two objects $V,W$;
\item $\la V\ra =\la V^*\ra$ for any object $V$.
\end{enumerate} }
\end{pro}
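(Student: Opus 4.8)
The plan is to establish all four statements via the Reshetikhin--Turaev graphical calculus for ribbon categories (\cite{Tbook}), which is available because ribbon categories are universal receivers for invariants of ribbon graphs. Recall the dictionary: a morphism $V_1\otimes\cdots\otimes V_k\to W_1\otimes\cdots\otimes W_\ell$ is drawn as a ribbon diagram in a box, read from bottom to top, with vertical stacking being composition and horizontal juxtaposition being $\otimes$; the structural morphisms $c_V,d_V$ become an oriented cup and cap, $b_{V,W}$ a positive crossing, and $\theta_V$ a positive curl, and two diagrams isotopic rel boundary represent the same morphism. Unwinding Definition \ref{qtrace}, for $f:V\to V$ the element $\tr(f):\1\to\1$ is exactly the invariant of the ribbon graph consisting of a single framed circle carrying the coupon $f$: $c_V$ opens a $(V,V^*)$--cup, the factor $\theta_V f$ decorates the $V$--strand, $b_{V,V^*}$ routes the $V$--strand over the $V^*$--strand, and $d_V$ closes them up, so that the $V$--strand is an unknotted, untwisted closed loop bearing $f$. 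I will use this description for all four parts.

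For (a), with $f\in\Hom(V,W)$ and $g\in\Hom(W,V)$ the compositions $f\circ g$ and $g\circ f$ are endomorphisms, and the ribbon graph for $\tr(f\circ g)$ is a framed circle carrying the coupons $g$ then $f$ in succession, with the two arcs between them labelled $W$ and $V$. Sliding the coupon $f$ once around the circle is an isotopy of ribbon graphs, and it produces the circle carrying $f$ then $g$, which is the graph for $\tr(g\circ f)$; hence $\tr(f\circ g)=\tr(g\circ f)$. Algebraically, this is the (di)naturality of the cup and cap, i.e. $(h\otimes\id_{V^*})c_V=(\id_W\otimes h^*)c_W$ for $h:V\to W$ together with its dual statement for the cap.

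For (b) and (c), take endomorphisms $f$ of $V$ and $g$ of $W$. The relevant inputs are the identification $(V\otimes W)^*=W^*\otimes V^*$ realised by the nested cup $c_{V\otimes W}=(\id_V\otimes c_W\otimes\id_{V^*})c_V$ and the corresponding nested cap for $d_{V\otimes W}$, together with $\theta_{V\otimes W}=b_{W,V}b_{V,W}(\theta_V\otimes\theta_W)$ from the definition of a twist. Drawing the ribbon graph for $\tr(f\otimes g)$ with these substitutions, the $V$--circle and the $W$--circle are initially linked through the nesting, but the double braiding $b_{W,V}b_{V,W}$ coming from $\theta_{V\otimes W}$ is precisely what unlinks them; after that isotopy the graph is the disjoint union of the $V$--circle carrying $f$ and the $W$--circle carrying $g$. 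Since the invariant of a disjoint union is the product of the invariants, $\tr(f\otimes g)=\tr(f)\tr(g)$, and (c) is the special case $f=\id_V,\ g=\id_W$. For (d), $\la V^*\ra$ is the invariant of the framed circle labelled $V^*$; rotating the plane of the diagram by $\pi$ reverses the circle's orientation, interchanges cup and cap, and turns the label $V$ into $V^*$ while carrying a positive curl to a positive curl, so it is an honest isotopy taking the $V$--circle to the $V^*$--circle, whence $\la V\ra=\la V^*\ra$.

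The steps needing genuine care are the two isotopies just invoked: that the double braiding in $\theta_{V\otimes W}$ exactly undoes the linking created by the nested duality morphisms in (b), and that the $\pi$--rotation in (d) is compatible with the orientation and framing conventions. Both are instances of the graphical coherence theorem of \cite{Tbook}; if one prefers to remain algebraic, each can be checked by a direct diagram chase using naturality of $b$, the zig--zag identities $(\id_V\otimes d_V)(c_V\otimes\id_V)=\id_V$ and $(d_V\otimes\id_{V^*})(\id_{V^*}\otimes c_V)=\id_{V^*}$, the identity $\theta_{V^*}=(\theta_V)^*$ coming from compatibility of the duality with the twist, and the compatibility of the duality with the braiding.
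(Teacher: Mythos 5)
Your proof is correct. The paper states this proposition without any proof, deferring implicitly to \cite{Tbook}, and your graphical argument (closing up the morphism into an annulus, sliding the coupon around the loop for cyclicity, letting the double braiding in $\theta_{V\otimes W}$ undo the linking created by the nested duality morphisms, and invoking orientation reversal of an unknotted annulus for $\la V\ra=\la V^*\ra$) is exactly the standard argument from that source; note in particular that part (d) is an instance of the paper's own Corollary on orientation reversal applied to the $0$--framed unknot.
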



\subsubsection{The category of colored ribbon graphs}

A ribbon graph is a compact, oriented surface in $\R^2\times [0,1]$
decomposed into bands, annuli and 
coupons. Bands are homeomorphic images of the square $[0,1]\times [0,1]$,
annuli are homeomorphic images of the cylinder
$S^1\times [0,1]$ and coupons are bands
with a distinguished base, called bottom (the 
oposite base is called top).
Isotopy of ribbon graphs means isotopy in the strip
$\R^2\times [0,1]$, constant on the boundary intervals
and preserving the decomposition
into annuli, bands and coupons as well
as preserving orientations.

Given a ribbon category $\C$, a ribbon graph is 
$\C$--\emph{colored} if each band and each 
annulus is equipped with
an object of $\C$, called \emph{color},
and each coupon 
is colored by a morphism
in $\C$.
Isotopy of colored ribbon graphs
means color-preserving isotopy.
Starting with a ribbon category $\C$, one can 
construct the category $\Rib_\C$ of $\C$--colored ribbon 
graphs as follows: the objects are
finite sequences of end points of 
ribbon graphs colored with objects of $\C$ or
their duals, according to the orientation of strands, and
the morphisms of $\Rib_\C$ are isotopy classes of 
$\C$--colored ribbon graphs (see Figure \ref{mf}).
The category $\Rib_\C$ can 
be given a tensor product, a natural braiding, twist
and compatible duality and it becomes in this way ribbon.
\begin{figure}[htbp]
\begin{center}
\includegraphics{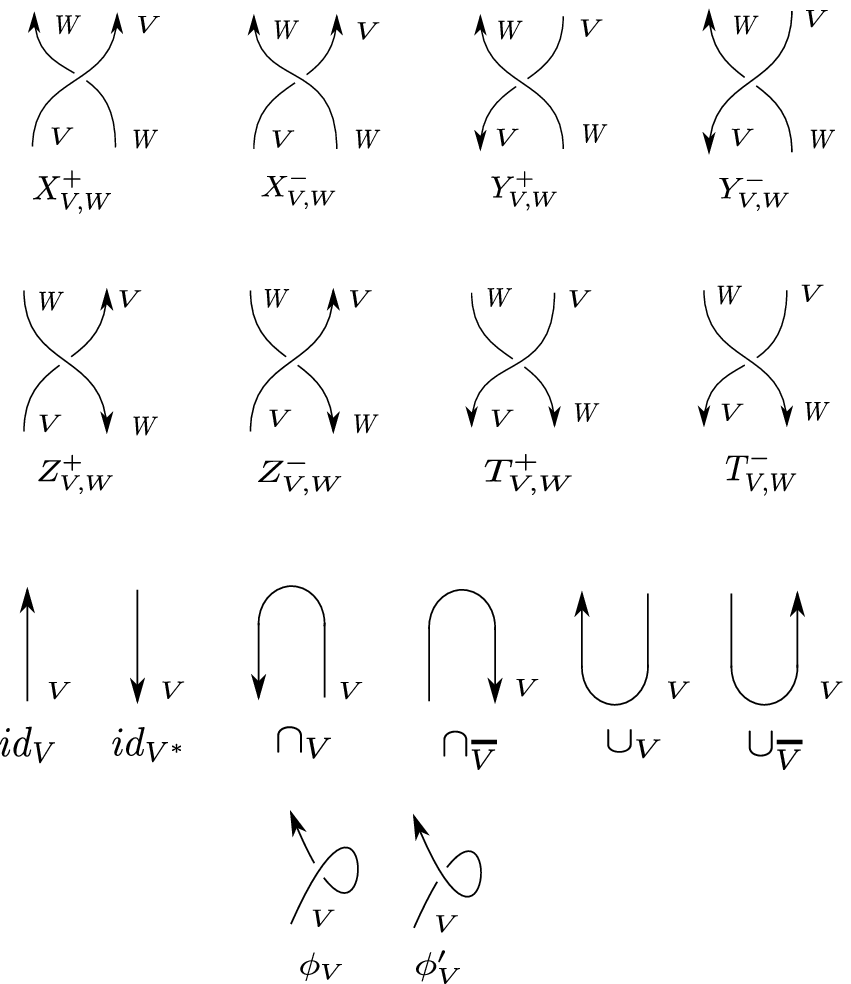}
\end{center}
\caption{Morphisms in $\Rib_\C$ \label{mf}}
\end{figure}

Ribbon categories turn out to play an important 
role in the theory of link invariants. The following 
theorem, due to Turaev, describes 
a way to associate to any ribbon category, 
invariants of colored ribbon graphs and
in particular, of colored ribbon tangles (i.e. graphs
without coupons).

\begin{thm}[\cite{Tbook}]
\label{functor}
 Let $\C$ be a ribbon category with a braiding
 $b$, a twist $\theta$ and a compatible duality 
 $(*,c,d)$. Then there
 exists a unique covariant functor $F_\C:\Rib_\C\to \C$ 
 preserving the tensor product such that (see Figure \ref{mf}):
 \begin{enumerate}
  \item  $F_\C(V,+1)=V$, $F_\C(V,-1)=V^*$;
  \item for any objects $V,W$ of $\C$, we have
  $F_\C(X^+_{V,W})=b_{V,W};F_\C(\phi_V)=\theta_V;F_\C(\cup_V)=c_V; F_\C(\cap_V)=d_V;$
  \item for any elementary $\C$--colored ribbon graph (ribbon graph with one coupon) $\Gamma$, we have 
  $F_\C(\Gamma)=f$ where $f$ is the color of the only coupon of $\Gamma$.
 \end{enumerate}
The functor $F_\C$ has the following properties:
$$ F_\C(X^-_{V,W})=(b_{W,V})^{-1}, F_\C(Y^+_{V,W})=(b_{W,V^*})^{-1}, F_\C(Y^-_{V,W})=b_{V^*,W}$$
$$ F_\C(Z^+_{V,W})=(b_{W^*,V})^{-1}, F_\C(Z^-_{V,W})=b_{V,W^*},$$
$$ F_\C(T^+_{V,W})=b_{V^*,W^*}, F_\C(T^-_{V,W})=(b_{W^*,V^*})^{-1}, F_\C(\phi'_V)=\theta_V^{-1}.$$
\end{thm}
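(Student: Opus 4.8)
The plan is to follow Turaev's strategy (\cite{Tbook}, Chapter~I): build $F_\C$ on generators and check it respects relations. First I would recall that $\Rib_\C$ is generated, under tensor product and composition, by the elementary colored ribbon graphs $X^\pm_{V,W}$, $Y^\pm_{V,W}$, $Z^\pm_{V,W}$, $T^\pm_{V,W}$, the twist graphs $\varphi_V,\varphi'_V$, the cup and cap graphs $\cup_V,\cap_V$, and the one-coupon graphs. So I would \emph{define} $F_\C$ on objects by $F_\C(V,+1)=V$, $F_\C(V,-1)=V^*$ and on these generators by the formulas in items (1)--(3) and the displayed list; since $F_\C$ is required to be tensor-preserving and covariant, this determines $F_\C$ on all of $\Rib_\C$ once we know it is well defined. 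Uniqueness is then immediate: any tensor-preserving functor satisfying (1)--(3) agrees with $F_\C$ on generators, hence everywhere.

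The real content is well-definedness, i.e.\ independence of the isotopy class representative and of the chosen factorization into elementary graphs. The key step is to invoke a normal-form / presentation theorem for $\Rib_\C$: isotopy classes of colored ribbon graphs are generated by the elementary graphs subject to a finite list of local relations --- the planar isotopy (pivotal/duality) moves, the Reidemeister-type moves expressing naturality of the braiding, the compatibility of twist with braiding $\theta_{V\otimes W}=b_{W,V}b_{V,W}(\theta_V\otimes\theta_W)$, the zig-zag (rigidity) identities $(\id_V\otimes d_V)(c_V\otimes\id_V)=\id_V$ and its mate, the hexagon axioms $b_{U,V\otimes W}=(\id_V\otimes b_{U,W})(b_{U,V}\otimes\id_W)$ and $b_{U\otimes V,W}=(b_{U,W}\otimes\id_V)(\id_U\otimes b_{V,W})$, and the twist-duality compatibility $(\theta_V\otimes\id_{V^*})c_V=(\id_V\otimes\theta_{V^*})c_V$. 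For each such move one checks that the two sides have the same image under the assignment above; but this is exactly what the axioms of a ribbon category guarantee. For instance, the move relating an over-crossing of two strands colored $V,W$ with a coupon colored $f\colon V\to V'$ slid across it becomes the naturality square $b_{V',W}(f\otimes\id_W)=(\id_W\otimes f)b_{V,W}$, which holds since $b$ is a natural transformation; the Whitney-trick move that turns a curl into a twist becomes $F_\C(\varphi_V)=\theta_V$ together with the first Reidemeister move, which is consistent precisely because of the twist--duality compatibility.

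Once well-definedness is established, the stated extra properties follow by short computations with the generators: $F_\C(X^-_{V,W})=(b_{W,V})^{-1}$ because $X^-_{V,W}$ is the inverse morphism of $X^+_{W,V}$ in $\Rib_\C$ and a functor preserves inverses; the formulas for $Y^\pm,Z^\pm,T^\pm$ follow by bending strands using $\cup$'s and $\cap$'s and applying the zig-zag identities, e.g.\ $Y^+_{V,W}$ is obtained from $X^+$ by turning the $V$-strand around, so $F_\C(Y^+_{V,W})=(b_{W,V^*})^{-1}$ after simplifying with the rigidity axioms; and $F_\C(\varphi'_V)=\theta_V^{-1}$ since $\varphi'_V$ is the inverse graph of $\varphi_V$. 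The main obstacle is the normal-form theorem for $\Rib_\C$ --- namely, that every ribbon graph decomposes into the listed elementary pieces and that any two decompositions of isotopic graphs are connected by the finite list of local moves. This is a topological fact about ribbon graphs in $\R^2\times[0,1]$ (a ``Markov-type'' theorem), and I would cite it from \cite{Tbook} rather than reprove it; granting it, the verification is the routine axiom-by-axiom check sketched above.
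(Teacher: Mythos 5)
The paper does not prove this theorem: it is stated with the citation \cite{Tbook} and used as a black box throughout. Your outline faithfully reproduces the strategy of the proof in the cited source --- define $F_\C$ on the elementary generators of $\Rib_\C$, check that each local isotopy relation maps to an identity guaranteed by the ribbon axioms, and deduce uniqueness from generation --- and the one ingredient you defer, the presentation of $\Rib_\C$ by generators and relations, is indeed the technical heart of Turaev's argument and is reasonable to cite rather than reprove.
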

\begin{figure}
\begin{center}
 
\end{center}
\end{figure}

In what follows we will use pictures to illustrate identities
in a ribbon category.
When drawing a pictorial identity, we always mean 
the corresponding morphisms in the ribbon category (see Figure \ref{formula} for an example).
\begin{figure}[htbp]
\begin{center}
\psfrag{a}{$V$}
\psfrag{b}{$=$}
\psfrag{o}{$f$}
\psfrag{e}{$g$}
\includegraphics{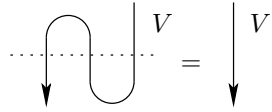}
\end{center}
\caption{$(d_V\otimes \id_{V^*})(\id_{V^*}\otimes c_V)=\id_{V^*}$\label{formula}}
\end{figure}
\begin{cor}[\cite{Tbook}]
\label{or}
 Let $\Omega$ be a $\C$-colored ribbon graph that contains an annulus
 $a$. If $\Omega'$ is the ribbon graph obtained from
 $\Omega$ by reversing the orientation of $a$ and replacing
 the color of $a$ with its dual object,
 then $F_\C(\Omega')=F_\C(\Omega).$
\end{cor}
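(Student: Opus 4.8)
The plan is to prove Corollary \ref{or} directly from the functoriality of $F_\C$ (Theorem \ref{functor}) combined with the compatibility of the duality with braiding and twist. The geometric content is simple: reversing the orientation of an annulus colored by $U$ and replacing $U$ by $U^*$ is realized, inside the category of ribbon graphs, by an isotopy together with the cancellation of a pair of ``turnback'' moves. So the strategy is to exhibit $\Omega'$ as equal to $\Omega$ in $\Rib_\C$ after inserting and then contracting such turnbacks, and then simply apply the functor.

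First I would reduce to a local statement. Since $F_\C$ is a tensor-preserving functor and isotopy classes of colored ribbon graphs form the morphisms of $\Rib_\C$, it suffices to check the claim when $\Omega$ differs from $\Omega'$ only inside a small ball meeting the annulus $a$ in a single (untwisted) arc. In that ball, reversing orientation and dualizing the color of $a$ amounts to the elementary identity in $\Rib_\C$ that a $U$-colored strand oriented one way equals the same strand oriented the other way with color $U^*$, \emph{provided} one accounts for the cup/cap that appear at the two points where the reversal meets the rest of $a$. Concretely, one writes the annulus $a$ (color $U$) as the closure of a $U$-colored band; reversing the orientation turns this into the closure of a $U^*$-colored band composed with $c_U$ at the top and $d_U$ at the bottom; the duality relation $(\id_U\otimes d_U)(c_U\otimes\id_U)=\id_U$ and its mirror let one slide these cup and cap around the annulus and cancel them, yielding exactly $\Omega'$. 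This shows $\Omega=\Omega'$ as morphisms in $\Rib_\C$, hence $F_\C(\Omega)=F_\C(\Omega')$.

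The one subtlety, and the step I expect to be the main obstacle, is the twist: an annulus carries a framing, and when one slides the cup/cap around the annulus to cancel them, the band may pick up a full twist, contributing $\theta_U$ (or $\theta_{U^*}$). This is precisely where the \emph{compatibility} of the duality with the twist, $(\theta_U\otimes\id_{U^*})c_U=(\id_U\otimes\theta_{U^*})c_U$, is needed: it guarantees that the twist produced on the $U$-side of the cup equals the twist on the $U^*$-side, so the two contributions match up and the net effect is trivial. Keeping careful track of these framing contributions — i.e. checking that no stray factor of $\theta_U\theta_{U^*}^{-1}$ survives — is the delicate bookkeeping in the argument; everything else is a formal consequence of Theorem \ref{functor}. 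Once the local identity is established with the framings handled correctly, the general case follows by naturality and the tensor-compatibility of $F_\C$, completing the proof.
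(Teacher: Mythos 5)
First, note that the paper does not prove this statement at all: it is quoted verbatim from \cite{Tbook} as a corollary of Theorem \ref{functor}, so there is no in-paper argument to compare against. Your strategy --- cut the annulus open into a band, trade the reversed-orientation $U$-strand for a $U^*$-strand at the cost of a cup and a cap, and cancel these using the duality axioms and the twist compatibility --- is essentially Turaev's own proof, and the place you flag as delicate (the framing contribution, handled by $(\theta_U\otimes\id_{U^*})c_U=(\id_U\otimes\theta_{U^*})c_U$) is indeed the right place to worry.

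There is, however, one assertion in your write-up that is genuinely false as stated: ``This shows $\Omega=\Omega'$ as morphisms in $\Rib_\C$.'' It does not, and cannot: morphisms of $\Rib_\C$ are isotopy classes of \emph{colored, oriented} ribbon graphs, and isotopies preserve both the colors and the orientations, so $\Omega$ and $\Omega'$ are distinct morphisms of $\Rib_\C$. Likewise a downward-oriented $U$-strand is the object $(U,-1)$ of $\Rib_\C$, which is a different object from $(U^*,+1)$; the two merely have the same image $U^*$ under $F_\C$. So the ``elementary identity'' you invoke is an identity of operator invariants, not of ribbon graphs, and the whole argument must be run after applying $F_\C$ --- which is also why the listed evaluations $F_\C(Y^{\pm}_{V,W})$, $F_\C(Z^{\pm}_{V,W})$, $F_\C(T^{\pm}_{V,W})$ in Theorem \ref{functor} exist in the first place: they are exactly what you need each time your cup/cap slides past a crossing of $a$ with another strand or with itself, a step your sketch compresses into ``slide these around the annulus.'' The argument is repairable with these two corrections (work at the level of $F_\C$-images throughout, and justify each slide past a crossing or extremum by the corresponding formula of Theorem \ref{functor}, by naturality of braiding and twist, and by the duality axioms), but as written the central claim is pitched at the wrong categorical level.
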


\subsection{Modular categories}\label{modcat}
Let $\k$ be a field of zero characteristic. A ribbon category
is called $\k$--linear or pre-additive if the Hom 
sets are $\k$--vector 
spaces, composition and tensor product are bilinear
and $\End(\1)=\k$. An object $\l$ of the category is called
\emph{simple} if the map $u\mapsto u\cdot \id_{\l}$ from $\k=\End(\1)$ to 
$\End(\l)$ is an isomorphism.

We will denote by $\C^\oplus$ the additive closure of a
pre-additive ribbon category $\C$, which admits direct sums of
objects and  compositions of morphisms are modelled on the
matrix multiplication (\cite{Maclane}).

\begin{dfn} A modular category over the field $\k$ is
a $\k$--linear ribbon category $\C$ with a 
finite set of simple objects $\G$
that satifies the following axioms:
\begin{enumerate}[(1)]
\item \emph{Normalization}: the trivial object $\1$ is in $\G$;
\item \emph{Duality}: for any object $\l\in \G$, its dual $\l^*$ is isomorphic
to an object in $\G$;
\item \emph{Domination}: for any object $V$ of the category, there exists a finite 
decomposition $$\id_V=\sum_i f_i\id_{\l_i}g_i$$
with $\l_i\in \G, \forall i$;
\item \emph{Non-degeneracy}: the matrix
$S=(S_{\l\m})_{\l,\m\in \G}$ is
invertible over $\k$,
where $S_{\l\m}\in \k$ is the endomorphism of the trivial 
object associated with the $(\l,\m)$--colored, $0$--framed 
Hopf link with linking $+1$.
\end{enumerate}
\begin{figure}[htbp]
\begin{center}
 \psfrag{a}{$\l$}
 \psfrag{b}{$\mu$}
 \includegraphics{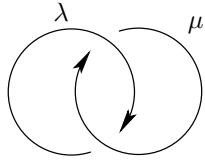}
 \caption{The Hopf link with linking number $+1$ and colors $\lambda$ and $\mu$}
\end{center}
\end{figure}
\end{dfn}
\noindent The category is called pre-modular if we remove the 
last axiom. Replacing $\C$ with its additive closure, we can reformulate the
domination axiom as follows: for any $\l,\nu, \mu \in\G$ there exist
positive integers $C^\mu_{\l\nu}$ called \emph{structure constants},
such that $\l\otimes\nu\simeq\oplus_{\mu\in \G}\;  C^\mu_{\l\nu}\;\mu$.
The domination axiom says that any object decomposes into a direct sum
of simple ones.
\begin{exa}
Let $\mathfrak g={\mathfrak{sl}}_N$ and $q$ be an $(N+K)$th root of unity.
Then, there exists an associated modular category
$\C(N,K)$ ($N,K\geq 2$) with simple objects given by partitions $\l$ from
$$\G=\{\l=(\l_1,...,\l_s)\;| \;\l_1\leq K, s<N   \}\, . $$
See e.g. \cite{Blanchet} for more details.
\label{mainex1}
\end{exa}
 \subsection{Properties of pre-modular categories}\label{premod}
In what follows, we describe basic properties 
of pre-modular categories that will be relevant to the rest of
the paper. We follow the lines of \cite{BB}.
Unless otherwise stated, $\C$ is a 
pre-modular category over a field $\k$ with zero characteristic and $\Gamma$ is the set 
of representatives of simple objects.

\begin{dfn}
\begin{enumerate}[(a)]
 \item An object $\l$ of $\C$ is called \emph{transparent} if 
for any object $\nu$ the following
morphisms in $\C$ are equal
\begin{center}
\psfrag{a}{$\lambda$}
\psfrag{b}{$\nu$}
\psfrag{c}{$=$}
\includegraphics{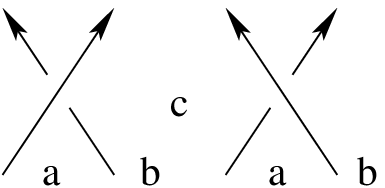}
\end{center}
\item A morphism $f\in \Hom(\l,\nu)$ is called \emph{negligible}
if $\tr(fg)=0$ for any $g\in \Hom(\nu,\l)$.
\item
 The \emph{braiding coefficients} between two objects $\l,\n \in \G$ are defined as 
 a collection $\{b_{\l, \nu}^{\mu}\}$ for all $\mu\subset \l\otimes \nu$ such that
 \begin{center}
 \includegraphics{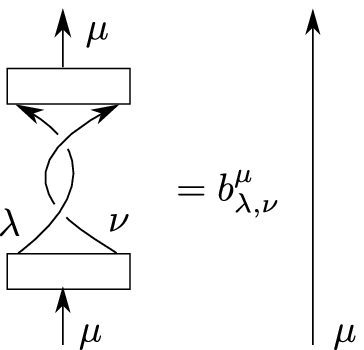}
 \end{center}
 \item For any object $\l\in \G$ the \emph{twist coefficient} $\theta_{\lambda}$ is defined
 by the equality:
 \begin{figure}[htbp]
 \label{twist}
  \begin{center}
  \psfrag{a}{$\lambda$}
  \psfrag{b}{$=\theta_{\lambda}$}
   \includegraphics{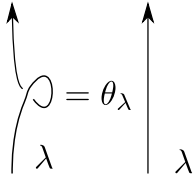}
  \end{center}
  \caption{Twist coefficient}
\end{figure}

\end{enumerate}
\end{dfn}
We discuss in detail 
a formula that will be used extensively throughout the paper.
Let us start by fixing some notations.
Given objects $\alpha_1,\cdots, \alpha_n, \beta_1,\cdots,\beta_m\in \Gamma$,
we denote by $H_{\alpha_1,\cdots,\alpha_n}^{\beta_1,\cdots,\beta_m}$ the
$\k$--module of morphisms $\Hom_\C(\alpha_1\otimes\cdots \otimes \alpha_n, \beta_1\otimes \cdots \otimes\beta_m).$

The modules $H_{\mu}^{\l\nu}$, $H_{\mu\nu^*}^{\l}$, $H_{\nu^*}^{\mu^*\l}$, $H_{\nu^*\l^*}^{\mu^*}$,
$H_{\l^*}^{\nu\mu^*}$ and $H_{\l^*\mu}^{\nu}$ 
are mutually isomorphic, as well as the modules $H_{\mu\nu^*\l^*}$, $H^{\l\nu\mu^*}$
and all obtained from them by a cyclic permutation of colors. 
Identifying these modules along the isomorphisms, we get a 
symmetrized multiplicity module $\tilde{H}^{\l\nu\mu^*}$ for
which only the cyclic order of colors is important. 
The elements of $\tilde{H}^{\l\nu\mu^*}$ are represented by
a round coupon with one incoming line (colored with $\mu$) and
two outgoing ones (colored by $\l$ and $\nu$).

The pairing $\langle\ ,\ \rangle:\tilde{H}^{\l\nu\mu^*}\times \tilde{H}^{\mu\l^*\nu^*}\to \k$
defined as $\langle f,g\rangle=\tr(fg)$
is non-degenerate since the category 
$\C$ can be assumed without negligible morphisms (if any, they can be quotiened
out). The symmetrized modules $\tilde{H}^{\l\nu\mu^*}$ and
$\tilde{H}^{\mu\l^*\nu^*}$ are dual to each other,
therefore we can 
choose bases $(a_i)_{i\in I_{\l\nu}^{\mu}}$
for $\tilde{H}^{\mu\l^*\nu^*}$ and $(b_i)_{i\in I_{\l\nu}^{\mu}}$ for 
the module $\tilde{H}^{\l\nu\mu^*}$ that are dual with respect to $\langle \ ,\ \rangle.$
Note that the composition $b_j\circ a_i$ is an endomorphism of the simple object $\mu$, so it 
is of the form $c_{ij}\cdot \id_{\mu}$, $c_{ij}\in \k$. 
Comparing the traces gives
$\delta_{ij}=c_{ij}\langle \mu \rangle $ so $b_j\circ a_i=\delta_{ij}\langle \mu \rangle^{-1}\id_{\mu}$.
For any simple objects $\l, \nu\in \G$, the domination axiom applied to
$\id_{\l\otimes \nu}$ yields the following relation, known as the \emph{fusion formula}:
$$\psdiag{14}{30}{test1} =\sum_{\mu\in \Gamma}\sum_{i\in I^{\mu}_{\lambda \nu}}\langle \mu \rangle \psdiag{14}{30}{test2}.$$

For any pre-modular category $\C$, let us define
the \emph{Kirby color}
$\omega= \sum_{\l\in\G}\la \l\ra \l$
as an element of $K_0(\C^\oplus)\otimes \k$,
where $K_0(\C^{\oplus})$ denotes the Grothendieck ring of the category $\C^{\oplus}$. 
\begin{pro}[Sliding property, \cite{BB}]
\label{slide}
In any pre-modular category the following handle sliding
identity holds
$$\psdiag{13}{28}{slide1}=\psdiag{13}{28}{slide42}.$$
\end{pro}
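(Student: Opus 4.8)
The plan is to apply the Reshetikhin--Turaev functor $F_\C$ of Theorem \ref{functor} and show that the two ribbon graphs in the statement of Proposition \ref{slide} have equal image in $\C$. The two diagrams coincide outside a small ball $B$ inside which one band is slid across the $\omega$--colored annulus, so by naturality of $F_\C$ it suffices to prove that the two morphisms obtained by restricting to $B$ agree, the strands meeting $\partial B$ serving as source and target objects. Inside $B$ I would localize further using the fusion formula displayed above: expanding every strand that meets the $\omega$--colored circle via fusion, the identity $\id_{V_1\otimes\cdots\otimes V_k}=\sum_{\mu\in\G}\sum_i\,b_i\circ(\mu\text{--strand})\circ a_i$ rewrites the bundle threading the circle as a $\k$--linear combination of single simple strands $\mu$ sitting between two trivalent coupons taken from the relevant multiplicity modules. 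Substituting this on both sides, using bilinearity of $F_\C$, and observing that the coupons $a_i,b_i$ lie outside the region touched by the slide, the problem collapses to the case in which a single simple strand $\mu$ passes through the $\omega$--colored circle (which is then a meridian of that strand).

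For a fixed simple $\mu$ the part of each diagram lying inside the $\omega$--circle is an endomorphism of $\mu$, hence a scalar multiple of $\id_\mu$ since $\End(\mu)=\k$; it remains to identify the two scalars. Taking the quantum trace (Definition \ref{qtrace}) of the first of these endomorphisms produces the $0$--framed $(\omega,\mu)$--colored Hopf link, which by linearity of $\omega=\sum_{\lambda\in\G}\la\lambda\ra\,\lambda$ evaluates to $\sum_{\lambda\in\G}\la\lambda\ra\,S_{\lambda\mu}$; hence the first scalar is $\la\mu\ra^{-1}\sum_{\lambda\in\G}\la\lambda\ra\,S_{\lambda\mu}$. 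For the second endomorphism I would apply Corollary \ref{or} to reverse the orientation of the $\omega$--annulus while dualizing its colour: since $\la\lambda\ra=\la\lambda^*\ra$ for every $\lambda$ and $\lambda\mapsto\lambda^*$ permutes $\G$, the Kirby colour $\omega$ is fixed by this operation, and after straightening the remaining crossings with the braiding axioms and using the symmetry $S_{\lambda\mu}=S_{\mu\lambda}$ (itself a further instance of Corollary \ref{or}) one is left with the same scalar. This gives the asserted identity.

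The step I expect to be the main obstacle is making the fusion localization rigorous: one must check that inserting the fusion expansion genuinely confines the discrepancy between the two diagrams to the inside of the $\omega$--circle, i.e. that the trivalent coupons, together with any twist factors $\theta_\mu^{\pm 1}$ produced when isotoping the simple strand across $B$, are untouched by the slide; and then the two scalar computations must be carried out with careful attention to precisely those twist factors and to the normalisation of the $S$--matrix. Once the move has been reduced to a single simple strand, the essential content is just that an $\omega$--colored meridian acts as a two--sided scalar, which follows from simplicity of $\mu$ together with the self--duality of the Kirby colour.
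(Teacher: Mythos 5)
Your reduction step is where the argument breaks. Fusing the strands that pass through the $\omega$--coloured circle does let you write the \emph{left-hand} diagram as a sum over simple $\mu$ of (exterior)$\circ$($\omega$--meridian of $\mu$), and that meridian is indeed the scalar $\la\mu\ra^{-1}\sum_\lambda\la\lambda\ra S_{\lambda\mu}$ times $\id_\mu$. But the \emph{right-hand} diagram is not of this form with the same exterior: the slid band runs all the way around the annulus, parallel to its core, so it leaves the region between your fusion coupons and re-enters it; inside the ball one does not see ``a single simple strand through a meridian'' at all, and the two diagrams do not agree outside any ball in which both reduce to endomorphisms of the same $\mu$. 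Even granting the reduction, the step ``after straightening the remaining crossings with the braiding axioms one is left with the same scalar'' is precisely the assertion to be proved: crossings between a strand and the $\omega$--circle cannot be removed by the braiding axioms alone (that would force the category to be symmetric), and reversing the orientation of the annulus together with $S_{\lambda\mu}=S_{\mu\lambda}$ does not convert the slid tangle into the unslid one -- the slid component even acquires a different self-framing, so its closure is not the $0$--framed Hopf link and picks up twist factors you have not accounted for. Note also that the meridian and killing-type facts you lean on sit \emph{downstream} of the sliding property in this paper's development, so the logical order is backwards.

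The proof the paper intends (written out explicitly for the graded version, and stated to be identical in the ungraded case) runs the other way: expand $\omega=\sum_{\lambda\in\G}\la\lambda\ra\,\lambda$, bring the $\nu$--band parallel to the $\lambda$--strand of the annulus and apply the fusion formula $\id_{\lambda\otimes\nu}=\sum_{\mu}\sum_{i\in I^{\mu}_{\lambda\nu}}\la\mu\ra\, b_i\,\id_\mu\, a_i$, isotope the resulting coupon pair once around the annulus (this is the step in which the band is dragged over the circle), apply the fusion formula in reverse, and observe that the weights $\la\lambda\ra\la\mu\ra$ together with the duality of the multiplicity modules resum to $\sum_\mu\la\mu\ra\,\mu=\omega$ on the slid configuration. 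No $S$--matrix evaluation and no passage to scalars is needed, which is exactly why the identity holds in an arbitrary pre-modular category. If you want to salvage your plan, you would have to prove the single-strand scalar identity directly; but that identity is equivalent to the proposition and still requires the fusion--isotopy--defusion argument.
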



\begin{lem}[Killing property, \cite{BB}] Let $\C$ be a pre-modular category with 
$\langle\omega\rangle\neq 0$ and $\l\in \Gamma$. Then, the morphism 
\begin{center}
\includegraphics{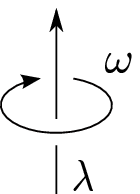}
\end{center}
is non-trivial if and only if $\l$ is transparent.
\end{lem}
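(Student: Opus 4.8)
The plan is to analyze the endomorphism
$$
\psdiag{14}{30}{KP1}
$$
which assigns to $\lambda\in\Gamma$ the morphism obtained by encircling a $\lambda$-colored strand with an $\omega$-colored annulus; call this morphism $e_\lambda\in\End(\lambda)$. Since $\lambda$ is simple, $e_\lambda=f_\lambda\,\id_\lambda$ for some scalar $f_\lambda\in\k$, and the claim is that $f_\lambda\neq 0$ (equivalently $e_\lambda$ is non-trivial) if and only if $\lambda$ is transparent. First I would compute $f_\lambda$ by taking the quantum trace: $\tr(e_\lambda)=f_\lambda\la\lambda\ra$, and on the other hand $\tr(e_\lambda)=\sum_{\mu\in\Gamma}\la\mu\ra\, S'_{\lambda\mu}$ where $S'_{\lambda\mu}$ is the invariant of the $0$-framed Hopf link with one component colored by $\lambda$ and the other by $\mu$ (with appropriate orientation). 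This is a standard manipulation: the annulus carries the Kirby color $\omega=\sum_{\mu}\la\mu\ra\mu$ by linearity.

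Next, the key ingredient is the \textbf{sliding property} (Proposition \ref{slide}): sliding the $\lambda$-strand over the $\omega$-annulus. The point is that $e_\lambda\otimes\id_\lambda$ composed with the appropriate cap/cup, or better, the relation
$$
\psdiag{13}{28}{slide1}=\psdiag{13}{28}{slide42}
$$
applied to the configuration where one strand is colored $\lambda$ and the encircling loop is colored $\omega$, shows that $e_\lambda$ interacts with the double braiding of $\lambda$ past any $\mu$ in a controlled way. Concretely, I would show: the morphism $e_\lambda$, when composed with the full twist / double braiding of $\lambda$ with any object $\nu$, is unchanged — i.e. $e_\lambda$ "absorbs" monodromy. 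This forces the following dichotomy. Consider the morphism $g_\nu\in\Hom(\lambda,\lambda)$ measuring the failure of transparency of $\lambda$ against $\nu$ (the difference of the two sides in the definition of transparent, read as an endomorphism of $\lambda$ after closing up appropriately). If $\lambda$ is \emph{not} transparent, then some such $g_\nu\neq 0$; but $g_\nu$ is then a non-zero endomorphism of the simple object $\lambda$, hence an isomorphism, and the sliding property shows $e_\lambda\circ g_\nu = e_\lambda$ — wait, more carefully: sliding shows $e_\lambda$ composed with a full monodromy loop equals $e_\lambda$, and summing the fusion formula decomposition of that monodromy over $\Gamma$ together with the sliding identity yields that $e_\lambda$ kills every non-transparent component. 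This is where one uses $\la\omega\ra\neq 0$: it guarantees that on the transparent part the scalar $f_\lambda$ is genuinely nonzero rather than accidentally vanishing.

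In the forward direction, if $\lambda$ is transparent, the $\omega$-annulus encircling $\lambda$ can be freely unlinked from $\lambda$ (transparency means the $\lambda$-strand passes through the annulus trivially), so $e_\lambda$ becomes the $\lambda$-strand tensored with the closed $\omega$-loop, whose value is $\la\omega\ra\,\id_\lambda$; since $\la\omega\ra\neq 0$ by hypothesis, $e_\lambda=\la\omega\ra\,\id_\lambda\neq 0$. For the converse, suppose $\lambda$ is not transparent, so there is $\nu\in\Gamma$ and a morphism witnessing non-transparency; I would expand $\id_{\lambda\otimes\nu}$ via the fusion formula, apply the sliding property to move the $\omega$-loop, and conclude that $e_\lambda$ factors through the non-identity isotypic components in a way that forces $f_\lambda=0$ — the non-degeneracy of the trace pairing on the symmetrized multiplicity modules $\tilde H$ is what converts "$e_\lambda$ composed with a non-transparent monodromy equals $e_\lambda$" into "$f_\lambda=0$". \textbf{The main obstacle} I anticipate is the converse direction: correctly setting up the diagrammatic computation so that non-transparency against a single $\nu$ produces the scalar identity $f_\lambda\la\nu\ra = f_\lambda\cdot(\text{monodromy scalar})$ with the monodromy scalar $\neq\la\nu\ra$, thereby forcing $f_\lambda=0$; this requires carefully tracking orientations and the precise form of the sliding identity, and choosing the right closure to land in $\End(\lambda)=\k$. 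I would handle this by writing both sides as elements of a multiplicity module and invoking the non-degeneracy of $\langle\,,\,\rangle$ established just above the fusion formula.
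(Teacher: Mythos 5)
Your proposal is correct and follows essentially the same route as the paper: the forward direction unlinks the $\omega$--annulus from the transparent $\lambda$ to get $\langle\omega\rangle\id_\lambda\neq 0$, and the converse rests on the sliding property forcing the monodromy of $\lambda$ with every $\nu$ to be trivial whenever the encircling morphism is a nonzero scalar. You phrase the converse as a contrapositive and invoke some unnecessary machinery (the quantum-trace/$S$--matrix computation and the non-degeneracy of the pairing on $\tilde H$ play no role — the identity $c\,(m_{\lambda\nu}-\id)=0$ in the $\k$--vector space $\End(\lambda\otimes\nu)$ already gives $c=0$), but the core argument is the paper's.
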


\begin{proof}
If $\l$ is transparent, then the map
\begin{equation*}
 \psdiag{10}{20}{KP1} =\langle \omega \rangle \psdiag{10}{20}{KP2}\neq 0.
\end{equation*}
Conversely, if we assume that
for some $c\in \k$, $c\neq 0$ we have
\begin{equation*}
 \psdiag{10}{20}{KP1} =c \psdiag{10}{20}{KP2}
\end{equation*}
then, by sliding any $\nu$--colored strand along the
$\omega$--colored one, we obtain the following equalities of morphisms
in $\C$
\begin{equation*}
 \psdiag{10}{20}{KP3} =c^{-1} \psdiag{10}{20}{KP4} =\psdiag{10}{20}{KP6}
\end{equation*}
and therefore, $\lambda$ is transparent.
\end{proof}

\begin{pro}[\cite{BB}]
\label{twisted}
A pre-modular category with $\langle\omega\rangle\neq 0$
and with no non-trivial transparent objects is modular.
\end{pro}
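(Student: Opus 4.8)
**Proof plan for Proposition (A pre-modular category with $\langle\omega\rangle\neq 0$ and no non-trivial transparent objects is modular).**

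The plan is to verify the only missing axiom, non-degeneracy of the $S$-matrix, since a modular category is by definition a pre-modular category for which $S=(S_{\lambda\mu})_{\lambda,\mu\in\Gamma}$ is invertible over $\fk$. First I would interpret $S_{\lambda\mu}$ through the functor $F_\C$ of Theorem \ref{functor}: $S_{\lambda\mu}$ is $F_\C$ of the $0$-framed Hopf link with components colored $\lambda$ and $\mu$, which equals the scalar obtained by encircling the $\lambda$-strand with a $\mu$-colored loop (times $\langle\lambda\rangle$, depending on normalization conventions). The key computational tool is the Killing property lemma together with the sliding property (Proposition \ref{slide}): encircling a simple object $\lambda$ with the Kirby color $\omega$ kills it unless $\lambda$ is transparent, in which case the result is $\langle\omega\rangle\,\mathrm{id}_\lambda$.

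The main step is to show that the matrix $\tilde S$ with entries $\tilde S_{\lambda\mu}=\sum_{\nu}\langle\nu\rangle S_{\lambda\nu}S_{\nu\mu}$ — equivalently $S^2$ up to the diagonal dimension factors — is, up to a nonzero scalar, a permutation-type matrix, so that $S$ itself must be invertible. Concretely, I would compute the morphism obtained by taking a $\lambda$-colored strand, encircling it by an $\omega$-colored loop, and then encircling that configuration again; by the sliding property the double $\omega$-encirclement can be reorganized, and the Killing property forces the only surviving contributions to come from transparent simple objects appearing inside $\lambda\otimes\mu^*$ (or the relevant fusion product). By hypothesis the only transparent simple object is $\1$, so $\1$ appears in $\lambda\otimes\mu^*$ precisely when $\mu\cong\lambda^*$... wait — more carefully: $\1\subset \lambda\otimes\nu$ iff $\nu\cong\lambda^*$, which pins down the nonzero entries of $S^2$ to the anti-diagonal $\mu=\lambda^*$, each equal to $\langle\omega\rangle$ times a nonzero dimension factor. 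Since $\langle\omega\rangle\neq 0$ and simple objects have nonzero quantum dimension in a pre-modular category without negligible morphisms, $S^2$ is invertible, hence so is $S$.

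The main obstacle I anticipate is bookkeeping the normalizations: keeping careful track of the factors $\langle\lambda\rangle$, $\langle\mu\rangle$, and the global factor $\langle\omega\rangle$ that appear when passing between $S_{\lambda\mu}$, the encircling endomorphism, and the entries of $S^2$, and making sure the resulting diagonal (resp. anti-diagonal) matrix genuinely has all nonzero entries. A secondary point is justifying that quantum dimensions of simple objects are nonzero: this follows because in a pre-modular category we may assume no negligible morphisms (as noted in the discussion preceding Proposition \ref{slide}), so $\langle\lambda\rangle=\tr(\mathrm{id}_\lambda)\neq 0$ for every simple $\lambda$ — otherwise $\mathrm{id}_\lambda$ would be negligible. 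Once these normalization constants are fixed, the invertibility of $S$ is immediate from the factorization through $S^2$.
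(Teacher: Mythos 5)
Your proposal is correct and follows essentially the same route as the paper: both multiply $S$ by a second Hopf-link matrix, fuse the two strands passing through the $\omega$-annulus via the fusion formula, and invoke the killing property together with the absence of non-trivial transparent objects to leave a single surviving term. The only difference is that the paper pairs $S$ with the mirror matrix $\overline{S}$ (opposite-crossing Hopf link), so the surviving condition $\1\subset\lambda\otimes\nu^*$ gives $S\cdot\overline{S}=\langle\omega\rangle\,\I$ directly, whereas you square $S$ and land on the charge-conjugation permutation $\mu=\lambda^*$ --- an equally valid way to conclude invertibility, modulo the dimension-factor bookkeeping you already flag.
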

\begin{proof}
Let $\overline{S}=(\overline{S}_{\l\nu})_{\l\nu}$ be a matrix with
entries given by the relation
\begin{equation*}
 \psdiag{10}{20}{smatrix1}=\frac{\overline{S}_{\lambda\nu}}{\langle \lambda \rangle}\psdiag{10}{20}{smatrix2}. 
\end{equation*}
\noindent
We want to prove that the product $S\cdot \overline{S}=\langle\omega\rangle \I$, 
where $\I$ is the identity matrix of size $|\Gamma|$.
We have
\begin{align*}
 (S\cdot \overline{S})_{\lambda \nu}&=\psdiag{10}{25}{Smatrix2}=\psdiag{10}{22}{Smatrix3}=\sum_{\mu \in \Gamma} \sum_{i\in I_{\lambda \nu^*}^{\mu}}\langle \mu \rangle \psdiag{10}{22}{Smatrix41}\\
 &=\langle \omega \rangle \sum_{i\in I_{\lambda \nu^*}^1} \psdiag{10}{24}{Smatrix42}=\langle \omega \rangle \cdot \delta_{\lambda \nu},
\end{align*}
\noindent
where $\delta_{\l\nu}$ is the Kronecker index.
The second equality holds by isotopy, the third 
equality holds by the \emph{fusion formula}, while the fourth
equality is a consequence of the \emph{killing property}. 
The last equality can be proved
using the structure of the modules $\Hom(\1,\l\otimes \nu^*)$ and
$\Hom(\l\otimes \nu^*,\1)$.
\end{proof}
\begin{note}
 If $\C$ is a modular category, then $\langle \omega \rangle$ is invertible
 in $\k$, and hence $\langle \omega \rangle \neq 0$.
\end{note}

\subsection{WRT invariants}\label{WRT}
Given a modular category $\C$ and a closed $3$--manifold $M=S^3(L)$ obtained by surgery on
$S^3$ along a framed link $L$, whose linking matrix has
$b_+$ positive and $b_-$ negative eigenvalues, we define
\begin{equation}\label{WRT}
\tau_\C(M) =\frac{F_\C(L (\omega, \dots, \omega) )}{F_\C(U_{1}(\omega))^{b_+} F_\C(U_{-1}(\omega))^{b_-}}
\end{equation}
where $U_{\pm 1}$ denotes the $\pm 1$--framed unknot and
  $F_\C:\Rib_\C \to\C$ is the  natural  ribbon functor.

\begin{thm}[\cite{Tbook}]
\label{T}
For any modular category $\C$,
$\tau_\C(M)$ defines a topological invariant of $M$, independent
on the choice of the link $L$.
\end{thm}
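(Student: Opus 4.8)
The plan is to verify invariance of the right-hand side of \eqref{WRT} under the three Kirby moves (stabilization, handle slide, orientation reversal), since by Kirby's theorem these generate the equivalence relation on framed links yielding homeomorphic manifolds. The normalization by $F_\C(U_{1}(\omega))^{b_+} F_\C(U_{-1}(\omega))^{b_-}$ is precisely designed to absorb the change coming from stabilization. First I would treat \textbf{orientation reversal}: by Corollary \ref{or}, reversing the orientation of an annulus colored by $\omega$ (equivalently, reversing a link component) leaves $F_\C$ unchanged, because $\omega=\sum_{\l\in\G}\la\l\ra\,\l$ is sent to $\sum_{\l\in\G}\la\l^*\ra\,\l^*=\sum_{\l\in\G}\la\l\ra\,\l$ using part (d) of the Proposition on traces together with the duality axiom permuting $\G$; neither $b_\pm$ nor the linking matrix eigenvalue count changes, so this case is immediate.

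Next, for \textbf{stabilization}, adding a disjoint $\pm1$--framed unknot colored by $\omega$ multiplies $F_\C(L(\omega,\dots,\omega))$ by $F_\C(U_{\pm1}(\omega))$ (since $F_\C$ is tensor-preserving and the new component is split from the rest), while $b_+$ or $b_-$ increases by one; these two effects cancel in the quotient. This uses that $F_\C(U_{\pm1}(\omega))\neq 0$, which holds in a modular category because $F_\C(U_{+1}(\omega))F_\C(U_{-1}(\omega))=\la\omega\ra$ (a Gauss-sum identity) and $\la\omega\ra$ is invertible by the Note following Proposition \ref{twisted}; one should record this non-vanishing so the quotient is well-defined in the first place.

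The main obstacle is \textbf{handle slide invariance}: one must show that sliding one link component over another, both colored by $\omega$, does not change $F_\C(L(\omega,\dots,\omega))$. This is where Proposition \ref{slide} (the sliding property) does the essential work: locally, a strand colored by any object passing through the $\omega$-colored component can be slid across without changing the morphism, and by the fusion/domination axiom every component of $L$ decomposes into simple summands, so the sliding property applies termwise. Here one invokes that the linking matrix's signature $(b_+,b_-)$ is a homeomorphism invariant of $W_L$ — equivalently, unchanged by handle slides — so the denominator is untouched; thus only the numerator's invariance, supplied by Proposition \ref{slide}, needs checking. I would assemble these three cases into the conclusion that $\tau_\C(M)$ depends only on $M=S^3(L)$, noting that the value on $M=S^3$ (empty link) is $\la\omega\ra^{-1}$ or $1$ depending on normalization conventions, which pins down the invariant completely.
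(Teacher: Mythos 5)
Your proposal is correct and follows essentially the same route as the paper: check invariance under the three Kirby moves, with stabilization absorbed by the normalization, handle slides handled by Proposition \ref{slide}, and orientation reversal by Corollary \ref{or} together with the invariance of the signature $(b_+,b_-)$. Your justification of the non-vanishing of $F_\C(U_{\pm 1}(\omega))$ via the Gauss-sum identity and the invertibility of $\langle\omega\rangle$ is a slightly more explicit version of the paper's appeal to non-degeneracy, but not a different argument.
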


\begin{proof} We need to show that
$\tau_\C(M)$ is well-defined and does not change under Kirby moves. 

The fact that $F_\C(U_{\pm 1}(\omega))$ are non-zero follows from the properties of the ribbon functor
$F_\C$ and the non--degeneracy axiom for $\C$. 

The first Kirby move is easy 
to establish. 
The invariance under
the second Kirby move
is provided by Proposition \ref{slide}. 
In order to prove invariance under orientation reversal,
let us consider the link $L'$ obtained from $L$ by reversing the 
orientation of a component $L_k$. Without  loss of generality,
we may assume $k=1$. Thanks to Corollary \ref{or} we have
\begin{align*}
 F_\C(L'(\omega,\cdots, \omega))&=\sum_{\l\in \Gamma}\langle \l \rangle F_\C(L'(\l,\omega,\cdots, \omega))=\sum_{\l\in \Gamma}\langle \l^* \rangle F_\C(L(\l^*,\omega,\cdots, \omega))\\
         &=\sum_{\mu \in \Gamma}\langle \mu \rangle F_\C(L(\mu,\omega,\cdots, \omega))=F_\C(L(\omega,\cdots, \omega)).
\end{align*}
The linking matrix $(L'_{ij})={^tS}(L_{ij})S$ with
$S=\diag(-1,1,\cdots,1)$ hence the matrices 
$(L'_{ij})$ and $(L_{ij})$ have the same eigenvalues. In particular,
$b'_+=b_+, b'_-=b_-$ so
$\tau_\C(M,L')=\tau_C(M,L)$. This concludes the proof.
\end{proof}

\section{Group--categories from invertible objects}
\label{graded}
In this section, we show that 
for any modular category $\C$,   its group of invertible  objects $G$ 
defines the structure of $\widehat{G}$--category on $\C$.
We  identify an object with its isomorphism class and hence equality between objects
means an isomorphism.

\subsection{Invertible objects}
An object $\l$ in $\G$ is called {\em invertible} if there exists
another object $\nu\in \G$ such that $\l \otimes \nu= 1$.
Isomorphism classes of invertible simple objects form a finite abelian group 
under tensor multiplication.  Let us denote this group 
by $G\subset \Gamma$ and let
$\widehat{ G}=\Hom(G, \mathbb C^*)$ be the group of its characters.
We say that the \emph{tensor order}
of $\l$ is $d$ if $\l^{\otimes d}=1$ with $d$ minimal.

\begin{lem}
For any $\l\in \G$ and any invertible $g\in G$,
$\l\otimes g \in \G$. 
\end{lem}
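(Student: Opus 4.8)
The claim is that tensoring by an invertible object $g \in G$ preserves the set $\Gamma$ of (representatives of) simple objects, i.e.\ $\lambda \otimes g$ is again simple whenever $\lambda$ is. The plan is to argue directly that $\lambda \otimes g$ is simple, and then invoke the Duality axiom (axiom (2) in the definition of a modular category) to conclude that it is isomorphic to an object of $\Gamma$. So the real content is: \emph{the tensor product of a simple object with an invertible object is simple}.

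First I would recall that in a $\fk$--linear ribbon category an object $V$ is simple precisely when $\End(V) = \fk$. Since $g$ is invertible, there is $\nu \in \Gamma$ with $g \otimes \nu = \1$, and tensoring with $\nu$ (respectively with $g$) gives mutually inverse $\fk$--linear maps
$$
\End(\lambda \otimes g) \;\xrightarrow{\;-\,\otimes\,\id_\nu\;}\; \End(\lambda \otimes g \otimes \nu) = \End(\lambda),
\qquad
\End(\lambda) \;\xrightarrow{\;-\,\otimes\,\id_g\;}\; \End(\lambda \otimes g).
$$
More precisely, $(f \otimes \id_\nu) \otimes \id_g$ is identified with $f$ under the coherence isomorphism $\lambda \otimes g \otimes \nu \otimes g \cong \lambda \otimes g$ obtained from $g \otimes \nu = \1$, and conversely; so these two operations are inverse bijections (one should check they are $\fk$--linear, which is immediate since tensor product of morphisms is bilinear). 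Hence $\End(\lambda \otimes g) \cong \End(\lambda) = \fk$ as $\fk$--vector spaces, which forces $\dim_\fk \End(\lambda \otimes g) = 1$; and since $\id_{\lambda \otimes g}$ is a nonzero element of it, $\End(\lambda \otimes g) = \fk \cdot \id_{\lambda \otimes g}$, i.e.\ $\lambda \otimes g$ is simple.

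Finally, by the Domination axiom, $\lambda \otimes g$ decomposes as a direct sum of objects of $\Gamma$; being simple, exactly one summand occurs with multiplicity one, so $\lambda \otimes g$ is isomorphic to that object of $\Gamma$. Under the standing convention (stated at the beginning of Section~\ref{graded}) that objects are identified with their isomorphism classes, this means $\lambda \otimes g \in \Gamma$, as claimed.

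The one point requiring a little care — the ``main obstacle'', though it is minor — is to set up cleanly the identification $\lambda \otimes g \otimes \nu \cong \lambda$ and check that conjugating $\End(\lambda \otimes g)$ through the two tensoring operations really lands back in $\End(\lambda)$ with no loss, i.e.\ that the composite $\End(\lambda \otimes g) \to \End(\lambda) \to \End(\lambda \otimes g)$ is the identity rather than merely an isomorphism. This follows from functoriality of $\otimes$ together with the coherence isomorphism $g \otimes \nu = \1$, but it is the step where one must be careful not to wave hands; everything else is formal.
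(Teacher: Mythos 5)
Your proof is correct and rests on the same key idea as the paper's: tensoring with $g^{-1}$ undoes tensoring with $g$, so $\l\otimes g$ must be simple, and domination then identifies it with an element of $\G$. The paper packages this slightly differently --- it first decomposes $\l\otimes g=\oplus_i\,\mu_i$ by domination and then tensors with $g^{-1}$ to conclude that $\l$ would otherwise decompose nontrivially --- whereas you verify $\End(\l\otimes g)\cong\End(\l)=\k$ directly; both routes are sound.
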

\begin{proof}
By the domination axiom we have 
 $\l \otimes g=\oplus_i \;\mu_i$. Multiplying this identity by the inverse of $g$, we get
$\l=\oplus_i\;  (\mu_i \otimes g^{-1})$. Thus, the right hand is simple, and so
 $\l \otimes g$ has to be simple too.
\end{proof}
\begin{exa}
In Example \ref{mainex1}
 the object $K$ (the longest row)
is invertible of order $N$.
\label{mainex2}
\end{exa}

\subsection{Group--category}
Assume $\C$ is a modular category with $G$ the group
of invertible simple objects. We can define a $\widehat{G}$--structure on $\C$ as follows:
Given $\l\in \G$, the braiding coefficient of $\l$ with elements of $G$ defines a map
$\Gamma \to \widehat G$ which
associates to each $\l$ a character $\chi_\l
\in \widehat{G}$ defined by the equality:
$$\psdiag{10}{21}{grading1}=\chi_{\lambda}(g) \psdiag{10}{20}{grading2}.$$

Indeed, the previous lemma implies that
the braiding operator acts on
 $\l$ and $g$ as a multiplication by
 $S_{\l g}/\la\l\ra\la g\ra$, i.e. only one braiding coefficient is non-zero. 
Using  the fact that $g$ is of finite order, we deduce
that this coefficient is a root of unity of that order.
Observe that $\chi_{\lambda\otimes\mu}=\chi_\l\chi_\mu$  defines a group multiplication on $\widehat G$.
By taking a logarithm of the character, we can identify $G$ with its dual $\widehat G$ and write the
group operation on $G$ additively.


Clearly,
 $\C$ splits into a disjoint union of subcategories $\C_\alpha$ for $\a \in \widehat G\cong G$.
Moreover,   for any $\l,\mu\in\G$,
$\Hom(\l,\mu)$ is either zero if
$\l\neq \mu$ or $\k$ otherwise  (by the assumption 
these objects are simple). Hence, 
we just proved the following.

\begin{pro}\label{Gcategory}
 A modular category $\C$ with a group $G$ of invertible objects  splits
as a disjoint union of subcategories $\{\C_\a\}_{\a\in  G}$ such that
\begin{itemize}
\item
each $\C_\a$ is a full subcategory of $\C$;
\item
each object of $\C$ belongs to $\C_\a$ for a unique $\a$;
\item
$\Hom_\C(\C_\a,\C_\beta)=0$ if $\a\neq \beta$;
\item
for $U\in \C_\a$ and $V\in \C_\beta$, $U \otimes V \in \C_{\a+\beta}$; 
\item
$1\in \C_0$, and for $U\in \C_\a$, $U^*\in \C_{-\a}$.
\end{itemize}
\end{pro}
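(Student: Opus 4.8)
The plan is to verify each of the five bulleted properties in turn, using the character map $\lambda\mapsto\chi_\lambda$ constructed just above the Proposition and the elementary facts about simple objects in a modular category. First I would record the key structural input: for each simple $\lambda\in\G$ the braiding operator $b_{g,\lambda}b_{\lambda,g}$ (equivalently, the monodromy of $\lambda$ with an invertible $g$) acts on the simple object $\lambda\otimes g$ as multiplication by a single scalar, namely the braiding coefficient $\chi_\lambda(g)$, which is a root of unity whose order divides the tensor order of $g$. This is exactly the content of the paragraph preceding the statement and of the preceding Lemma (that $\lambda\otimes g$ is again simple). From $\chi_{\lambda\otimes\mu}=\chi_\lambda\chi_\mu$ and the identification $G\cong\widehat G$ via the discrete logarithm, define $\C_\alpha$, for $\alpha\in G$, to be the full subcategory whose objects are direct sums of simple objects $\lambda$ with $\chi_\lambda=\alpha$ (under the identification). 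Fullness and the second bullet (every object lies in a unique $\C_\alpha$, after passing to the additive closure and decomposing into simples via domination) are then immediate by construction.

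Next I would handle the vanishing of Hom's between different degrees: if $U\in\C_\alpha$ and $V\in\C_\beta$ with $\alpha\neq\beta$, decompose both into simples; any morphism between simples $\lambda,\mu$ with $\chi_\lambda\neq\chi_\mu$ must be zero, since a nonzero morphism between simple objects in a $\k$-linear category with $\End(\1)=\k$ is an isomorphism, forcing $\lambda\cong\mu$ and hence $\chi_\lambda=\chi_\mu$. For the tensor-product bullet it suffices, by bilinearity of $\otimes$ and the domination/fusion decomposition, to check it on simple objects: if $\lambda\in\C_\alpha$ and $\mu\in\C_\beta$, then for every simple summand $\nu$ of $\lambda\otimes\mu$ I claim $\chi_\nu=\chi_\lambda\chi_\mu=\alpha\beta$. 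To see this, compute the monodromy of $\lambda\otimes\mu$ with an invertible $g$ two ways: naturality of the braiding gives that this monodromy acts on $\lambda\otimes\mu$ as $\chi_\lambda(g)\chi_\mu(g)$ times the identity, while restricting to the summand $\nu$ it acts as $\chi_\nu(g)$; hence $\chi_\nu(g)=\chi_\lambda(g)\chi_\mu(g)$ for all $g$, i.e. $\chi_\nu=\chi_\lambda\chi_\mu$.

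For the last bullet, $\1$ is transparent, so its monodromy with any $g$ is trivial, giving $\chi_\1=1$ and $\1\in\C_0$. For duality, use that $\chi_\lambda(g)\,\chi_{\lambda^*}(g)$ computes the monodromy of $\lambda\otimes\lambda^*$ with $g$; since $\lambda\otimes\lambda^*$ contains $\1$ as a summand and $\1$ is transparent, this scalar must be $1$, whence $\chi_{\lambda^*}=\chi_\lambda^{-1}$, i.e. $U\in\C_\alpha$ implies $U^*\in\C_{-\alpha}$. Assembling these verifications gives the Proposition. The main obstacle, such as it is, is purely bookkeeping: making sure the reduction to simple objects is justified (domination, plus the fact that $\lambda\otimes g$ stays simple so that ``one braiding coefficient is nonzero'' really holds) and that the multiplicativity $\chi_\nu=\chi_\lambda\chi_\mu$ on fusion summands is extracted cleanly from the monodromy computation; there is no deep difficulty, only the need to be careful that every step uses only simplicity, domination, and naturality of the braiding, all of which are available from the earlier material.
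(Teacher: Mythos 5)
Your proof is correct and follows essentially the same route as the paper: the grading is defined via the character $\chi_\lambda$ coming from the monodromy of $\lambda$ with invertible objects, multiplicativity $\chi_{\lambda\otimes\mu}=\chi_\lambda\chi_\mu$ gives the tensor compatibility, and simplicity gives the vanishing of Hom's between distinct degrees. You merely spell out details the paper leaves implicit (the naturality argument showing every fusion summand of $\lambda\otimes\mu$ has character $\chi_\lambda\chi_\mu$, and the deduction $\chi_{\lambda^*}=\chi_\lambda^{-1}$ from transparency of $\1$), which is consistent with the paper's one-paragraph argument preceding the Proposition.
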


Note that this is a special case of  group-categories defined by Turaev in
  \cite{THQFT}.

If $\lambda \in \C_\a$, then we will call $\a$ {\it degree} of $\l$ and denote by
by $\deg(\lambda)$.

If $G$ is cyclic of order $d$,  
we  call $\C$ a 
{\it modular $d$--category}.
Fixing a generator $t$ of $G$ and a primitive $d$th root of unity $e_d$,
 we have the decomposition
$$ \Gamma= \Gamma_0 \amalg \Gamma_1 \amalg \dots \amalg \Gamma_{d-1},$$
where $\Gamma_i=\{\l\in \Gamma| \chi_{\l}(t)=e_d^i\}$. 


\begin{exa} Let us assume that $N=(N,K)$ in Examples \ref{mainex1} and \ref{mainex2}. Then $G=\Z_N$ 
is generated by $K$.
 Hence,
the category $\C(N,K)$ is  
a modular $N$-category with
$\deg(\l)= \sum_i \l_i \pmod N$. 
\end{exa}

We will need the following lemma.
\begin{lem}\label{lem21}
Let $\C$ be a 
modular $d$--category such that the group $G=\langle t \rangle \cong \Z_d$.
Then  $\langle t\rangle=1$ if $d$ is odd, and $\langle t\rangle=\pm 1$ if $d$ is even. 
\end{lem}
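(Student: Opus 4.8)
The plan is to compute the quantum dimension $\la t\ra$ directly from the structure of an invertible object. First I would observe that since $t$ is invertible with $t^{\otimes d}=1$, multiplicativity of quantum dimension (Proposition, part (c)) gives $\la t\ra^d = \la 1\ra = 1$, so $\la t\ra$ is a $d$-th root of unity. This already cuts the problem down, but we need the sharper statement that $\la t\ra=\pm1$ always, and $\la t\ra=1$ when $d$ is odd.

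The key extra input is the relation between $\la t\ra$, the braiding character $\chi_t$, and the twist coefficient $\theta_t$. Since $t$ is invertible, $t\otimes t = t^{\otimes 2}$ is again simple (by Lemma~\ref{lem21}'s preceding lemma on $\l\otimes g$ being simple), and the twist compatibility $\theta_{t\otimes t} = b_{t,t}b_{t,t}(\theta_t\otimes\theta_t)$ restricted to the simple object $t^{\otimes 2}$ reads, in scalars, $\theta_{t^{\otimes 2}} = \chi_t(t)^2\,\theta_t^2$, where $\chi_t(t)$ is the single nonzero braiding coefficient of $t$ with itself. More generally, iterating, $\theta_{t^{\otimes k}} = \chi_t(t)^{k(k-1)}\theta_t^k$ — i.e. the twist defines a quadratic form on $G$ refining the bilinear form $\chi$. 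Setting $k=d$ and using $t^{\otimes d}=1$, $\theta_1 = 1$, we get $1 = \chi_t(t)^{d(d-1)}\theta_t^d$. Separately, I would use the self-linking computation: the $S$-matrix entry $S_{tt} = \la t\ra^2\,\chi_t(t)^{-1}$ (the Hopf link with both components colored $t$ evaluates to $\chi_t(t)^{-1}\la t\ra\la t\ra$ by pulling one strand through, using that braiding acts by the scalar $\chi_t(t)$), and more usefully, the standard fact that for a simple object the twist and the "ribbon" relation tie $\la t\ra$ to $\la t^*\ra$; since $\la t\ra=\la t^*\ra$ always (Proposition part (d)) and $t^* = t^{\otimes(d-1)}$, this gives $\la t\ra = \la t\ra^{d-1}$, hence $\la t\ra^{d-2}=1$.

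Combining $\la t\ra^d = 1$ with $\la t\ra^{d-2}=1$ yields $\la t\ra^{\gcd(d,d-2)}=1$. Now $\gcd(d,d-2)$ equals $1$ if $d$ is odd and $2$ if $d$ is even, which gives exactly the claim: $\la t\ra=1$ for $d$ odd, and $\la t\ra^2=1$, i.e. $\la t\ra=\pm1$, for $d$ even. So the skeleton of the proof is: (i) $\la t\ra$ is a $d$-th root of unity by multiplicativity; (ii) $\la t\ra=\la t^*\ra$ forces $\la t\ra$ to be a $(d-2)$-th root of unity; (iii) take gcd.

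The step I expect to be the main obstacle is making step (ii) fully rigorous — one must be careful that "$\la t\ra = \la t^*\ra$" genuinely produces $\la t\ra = \la t\ra^{d-1}$, which requires knowing $\la t^{\otimes(d-1)}\ra = \la t\ra^{d-1}$ (fine by multiplicativity) \emph{and} that $t^* \cong t^{\otimes(d-1)}$ as objects of $\G$ (true since $t\otimes t^{\otimes(d-1)} = t^{\otimes d} = 1$, and inverses in the group $G$ are unique). A cleaner alternative that avoids any delicacy, which I would fall back on if needed, is to argue purely via the twist: quantum dimensions of invertible objects in a modular (hence non-degenerate, and in particular the Gauss sums are nonzero) category are known to be roots of unity of absolute value $1$, and in fact $\pm 1$ — but since the problem only asks for the elementary statement, the gcd argument above should suffice without invoking positivity or unitarity.
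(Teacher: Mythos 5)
Your proof is correct, but it takes a genuinely different route from the paper's. You use only two trace identities from the unnumbered Proposition in Section 4.1 --- multiplicativity $\la V\otimes W\ra=\la V\ra\la W\ra$ and duality--invariance $\la V\ra=\la V^*\ra$ --- to obtain $\la t\ra^{d}=\la \1\ra=1$ and, via $t^*\cong t^{\otimes(d-1)}$, $\la t\ra^{d-2}=1$, and then conclude from $\gcd(d,d-2)=\gcd(d,2)$. This is clean and makes no use whatsoever of the braiding or the twist, so it works verbatim in any pre-modular (indeed any semisimple ribbon) setting. The paper instead argues graphically: it establishes $\theta_t=b_{t,t}\la t\ra$ by straightening a curl, combines this with the duality relation $\theta_t b_{t,t^*}=1$ and $t^*=t^{\otimes(d-1)}$ to get $\la t\ra (b_{t,t})^{d}=1$, and then uses $(b_{t,t})^{2d}=1$ (the monodromy coefficient is a $d$th root of unity) to deduce $\la t\ra^{2}=1$, the odd case following from $\la t\ra^{d}=1$ exactly as in your step (i). What the paper's longer computation buys is the identity $\theta_t=b_{t,t}\la t\ra$ itself, which is reused later (in Lemma \ref{cancel} and in the twist--coefficient computations of Section 7), whereas your argument isolates just the dimension statement. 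One small caveat on your exploratory middle paragraph: the iterated twist formula should read $\theta_{t^{\otimes k}}=\chi_t(t)^{k(k-1)/2}\theta_t^{k}$ (the exponent is $\binom{k}{2}$, not $k(k-1)$); since that formula plays no role in your final skeleton, this slip does not affect the proof.
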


\begin{proof} Let $b_{t,t}$ be the braiding coefficient, such that
\begin{center}
\psfrag{a}{$t$}
\psfrag{b}{$t$}
\psfrag{c}{$=\; b_{t,t}$}
\includegraphics{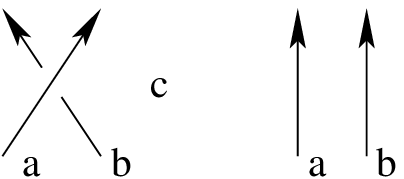}
\end{center}
Using
\begin{center}
\psfrag{a}{$t$}
\psfrag{b}{$=\; b_{t,t}$}
\psfrag{c}{$t$}
\psfrag{d}{$=\; b_{t,t}\langle t\rangle$}
\psfrag{e}{$=\theta_t$}
\includegraphics{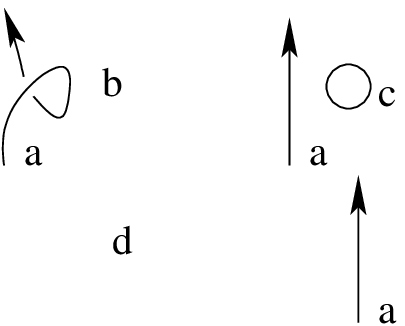}
\end{center}
we have  $\theta_t=b_{t,t}\la t\ra$. On the other hand, 
from Definition \ref{qtrace} we get
$\theta_t b_{t, t^*}=1$, where $t^*=t^{d-1}$, so $\langle t\rangle (b_{t,t})^d=1$. Since $(b_{t,t})^{2d}=1$, it follows that $\langle t\rangle=\pm 1$.
If $d$ id odd, then $\langle t^{\otimes d}\rangle=\la 1\ra=1=\langle t\rangle$.
\end{proof}
Let us recall the following definition which
will play a central role in our exposition.


\begin{dfn}\label{Hspin} Let $H\subset G$ be a subgroup.
A modular category  with a group $G$ of invertible objects 
is called {\it  $H$--refinable} if
 $H\subset \C_0$.
If, in addition, $H$ has at least one element with twist coefficient $-1$, 
we call the category {\it $H$--spin}.
 \end{dfn}

In the case $H$ is cyclic of order $d$, we will use the shorthand
$d$--refinable and $d$--spin.

\begin{exa} Given an even integer $d=(N,K)=\a\b$, such that $(\a, N')=(\b,K')=1$ for
$N'=\frac{N}{d}$ and $K'=\frac{K}{d}$,
The second author constructed in \cite{Blanchet} a category $\dot\C(N,K)$ with
$$\dot\Gamma=\{ (1^N)^{\otimes i} \otimes \l\; | \; 0\leq i< \a, \l_1\leq K, 
\l^\vee_1<N
\}$$
and $G=\Z_d$ generated by $1^N\otimes K$. Here $\l^\vee$ denotes the 
transpose partition.
The second author proved that the category $\dot\C(N,K)$ is $d$--spin modular.

\end{exa}

\subsection{Sliding identities in  pre-modular group-categories}
Assume $\C$ is a 
pre-modular $d$--category.
For any $0\leq u<d$, let us define the refined Kirby colors
$\omega_u=\sum_{\l\in \G_u} \;\la\l\ra \l$
as objects in the additive closure of $\C$.


\begin{lem}[Graded sliding property]
 In any 
 pre-modular $d$--category $\C$ we have the following 
equality of morphisms:
 $$\psdiag{13}{28}{slide11}=\psdiag{13}{28}{slide4}$$
 with $|\nu|=\deg(\nu)$.
\end{lem}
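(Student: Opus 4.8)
The plan is to reduce the graded sliding identity to the ordinary sliding property of Proposition \ref{slide} by exploiting the $\widehat G$--grading. Recall that the Kirby color $\omega = \sum_{\l\in\G}\la\l\ra\l$ decomposes as $\omega = \sum_{u=0}^{d-1}\omega_u$ along the degree decomposition $\G = \coprod_u \G_u$. The statement to be proved says that sliding a $\nu$--colored strand (with $\deg(\nu)=|\nu|$) over an $\omega_u$--colored annulus produces an $\omega_{u+|\nu|}$--colored annulus (with the indices read modulo $d$), rather than simply reproducing $\omega_u$ as in the ungraded case. So the key point is to understand how the degree of the sliding strand interacts with the grading of the Kirby color.

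**First I would** apply Proposition \ref{slide} formally to the full Kirby color $\omega$: sliding the $\nu$--strand over an $\omega$--colored annulus reproduces the $\omega$--colored annulus. Then I would decompose both sides along the grading. On the side where $\nu$ passes over $\omega_u$, the result is a sum over $\l\in\G_u$, and by Lemma \ref{lem21}'s method (i.e. the braiding coefficient of $\l\in\G_u$ with an invertible object, together with the definition of $\chi_\l$), sliding over the strand colored by a degree-$u$ object introduces precisely the scalar coming from the monodromy of $\nu$ with the relevant invertible object. Concretely, the handle-slide move replaces a strand's color $\l$ by the result of encircling it; in the graded setting the fusion channels that contribute after sliding $\nu$ over $\l\in\G_u$ all lie in $\G_{u+|\nu|}$, because $\deg$ is additive under tensor product (Proposition \ref{Gcategory}) and the fusion rules respect the grading: if $\mu\subset\l\otimes\nu$ then $\deg\mu = \deg\l+\deg\nu$. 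This is what shifts $\omega_u$ to $\omega_{u+|\nu|}$.

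**The cleanest route** is probably: write the left-hand side, insert the fusion formula (the one displayed just before Proposition \ref{slide}) to resolve the $\nu$--strand against the $\omega_u$--strand, observe that for $\l\in\G_u$ every simple $\mu$ appearing in $\l\otimes\nu$ has $\deg\mu = u+|\nu|$, and then recognize the resulting sum $\sum_{\mu\in\G_{u+|\nu|}}\la\mu\ra(\cdots)$ as exactly the expansion of the $\omega_{u+|\nu|}$--colored annulus obtained from the ordinary sliding property restricted to degree $u+|\nu|$. Matching the multiplicity modules and the duality pairing between $\tilde H^{\l\nu\mu^*}$ and $\tilde H^{\mu\l^*\nu^*}$ (as set up before Proposition \ref{slide}) shows the coefficients agree. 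One should check that the quantum dimensions bookkeep correctly: $\la\l\ra$ for $\l\in\G_u$ on the left must reassemble into $\la\mu\ra$ for $\mu\in\G_{u+|\nu|}$ on the right, which is forced by the same computation that proves the ungraded Proposition \ref{slide}, now applied grading-component by grading-component.

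**The main obstacle** I anticipate is purely diagrammatic bookkeeping: making sure that the orientation conventions for the $\nu$--strand and the annulus are handled consistently (so that the shift is by $+|\nu|$ rather than $-|\nu|$), and verifying that no extra twist or framing scalar sneaks in when the $\nu$--strand passes over the annulus. This is exactly the kind of subtlety that Corollary \ref{or} (orientation reversal of annuli) and the compatibility of twist with duality are designed to control, so I would lean on those. Once the grading additivity $\deg(\l\otimes\nu)=\deg\l+\deg\nu$ is in hand, the identity is essentially the degree-graded refinement of Proposition \ref{slide}, and the proof is a short diagrammatic argument rather than a new computation.
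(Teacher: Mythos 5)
Your proposal is correct and follows essentially the same route as the paper: expand $\omega_u$ over $\lambda\in\G_u$, apply the fusion formula, observe that $\Hom(\lambda\otimes\nu,\mu)=0$ unless $\deg(\mu)=\deg(\lambda)+\deg(\nu)$ so the sum over $\mu$ restricts to $\G_{u+|\nu|}$, and reassemble into $\omega_{u+|\nu|}$ exactly as in the ungraded sliding property. The only superfluous element is the first paragraph's mention of a monodromy scalar, which plays no role; your ``cleanest route'' paragraph is the paper's proof.
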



\begin{proof}
The proof is the same as in the non-graded case, using
the fact that $\Hom(\l\otimes\nu, \mu)$ is zero
unless $\deg(\mu)=\deg(\l)+\deg(\nu)$. Hence, the sum over $\mu$ can be 
restricted to $\G_{u+|\nu|}$.
\begin{align*}
 \psdiag{13}{28}{slide11}&=\sum_{\lambda \in \Gamma_u} \langle \lambda \rangle \psdiag{13}{28}{slide111}
 =\sum_{\lambda \in \Gamma_u}\sum_{\mu \in \Gamma_{u+l}}\sum_{i\in I_{\lambda \nu}^{\mu}}\langle \lambda \rangle \langle \mu \rangle \psdiag{13}{28}{slide2}\\
 &=\sum_{\lambda\in \Gamma_u}\sum_{\mu\in \Gamma_{u+l}}\sum_{i\in I_{\lambda }^{\mu\nu^*}}\langle \lambda \rangle \langle \mu \rangle \psdiag{13}{28}{slide3}
 =\sum_{\mu\in \Gamma_{u+l}}\langle \mu \rangle \psdiag{13}{28}{slide41}\\
 &=\psdiag{13}{28}{slide4}.
\end{align*} 

In the second and the fourth equalities we use the \emph{fusion formula}, the third equality holds by isotopy.

\end{proof}

 For any $0\leq v<d$, and a primitive $d$th root of unity $e_{d}$,
we define
$$\omega^v=\sum_{\l\in \G} e^{v\deg(\l)}_{d} \la\l\ra \, \l\, $$
the dual refined Kirby color. Note that $\omega^0=\omega$, however
other dual Kirby colors depend on the choice of $e_{d}$.
 We use the graded sliding identity to prove the following lemma.

\begin{pro} [Dual sliding property]
In any 
pre-modular $d$--category $\C$ we have the following equality of morphisms:
\begin{enumerate}[(a)]
 \item \centering $\psdiag{13}{28}{slide1d}=e_{d}^{-v\deg(\nu)}\psdiag{13}{28}{slide42d}$
 \item  \centering $\psdiag{13}{28}{slide1d1}=\psdiag{13}{28}{slide42d1}.$
\end{enumerate}
\end{pro}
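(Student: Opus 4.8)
The plan is to derive the Dual sliding property directly from the Graded sliding property by decomposing the dual Kirby colors into their graded pieces. Recall that by definition $\omega^v=\sum_{u=0}^{d-1} e_d^{vu}\,\omega_u$, where $\omega_u=\sum_{\l\in\G_u}\la\l\ra\l$ is the refined Kirby color in degree $u$. So both sides of the claimed identities are sums over $u$ of the corresponding graded pieces, each weighted by $e_d^{vu}$, and it suffices to understand how a single $\omega_u$-colored annulus slides across a $\nu$-colored strand.

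For part (a), I would write the left-hand side as $\sum_{u=0}^{d-1} e_d^{vu}$ times the morphism obtained by sliding the $\nu$-strand along an $\omega_u$-colored circle. By the Graded sliding property this equals $\sum_{u=0}^{d-1} e_d^{vu}$ times the same picture but with the circle now colored by $\omega_{u+|\nu|}$, where $|\nu|=\deg(\nu)$. Re-indexing the sum via $u'=u+|\nu|\pmod d$ (legitimate since all colors and the weights $e_d^{vu}$ are periodic mod $d$, $G$ being cyclic of order $d$), this becomes $\sum_{u'} e_d^{v(u'-|\nu|)}\,(\text{picture with }\omega_{u'}) = e_d^{-v\deg(\nu)}\sum_{u'} e_d^{vu'}\,(\text{picture with }\omega_{u'}) = e_d^{-v\deg(\nu)}$ times the right-hand side, which is exactly the $\omega^v$-colored picture. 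This gives (a).

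For part (b), the situation differs only in the orientation/configuration of the $\nu$-strand relative to the dual Kirby annulus (the strand passing so that no net degree shift occurs, or equivalently a configuration in which the two crossings of $\nu$ with the annulus contribute opposite degree shifts and cancel). Concretely, one applies the Graded sliding property in a form where the degree of the sliding color is unchanged — for instance sliding $\nu$ and then $\nu^*$ across, or using the version of the identity in which the $\nu$-strand forms a cap/cup so that its contribution to the degree bookkeeping is $\deg(\nu)+\deg(\nu^*)=0$. Then in the re-indexed sum the factor $e_d^{v\cdot 0}=1$ appears instead of $e_d^{-v\deg(\nu)}$, and the identity holds with no correction term. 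I would present this by reducing (b) to (a) applied twice (once for $\nu$, once for $\nu^*$), the two factors $e_d^{-v\deg(\nu)}$ and $e_d^{-v\deg(\nu^*)}=e_d^{v\deg(\nu)}$ multiplying to $1$.

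The main obstacle is purely bookkeeping: being careful that the re-indexing of the sum over degrees is valid modulo $d$ and that the exponent of $e_d$ picked up is exactly $\deg(\nu)$ and not, say, $-\deg(\nu)$ or a multiple thereof — this requires fixing once and for all the convention for which crossing in the picture corresponds to multiplication by $\chi_\l(t)$ versus its inverse, consistent with the definition of $\omega^v$ and with the Graded sliding property as stated. Once the sign and indexing conventions are pinned down, both identities follow by linearity from the single graded case with no further categorical input.
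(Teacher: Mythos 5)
Your argument is correct and essentially the paper's own: for (a) the paper decomposes $\omega^v$ into simple objects and re-runs the fusion-formula computation, shifting the exponent via $\deg(\mu)=\deg(\lambda)+\deg(\nu)$, which is exactly your decomposition into graded pieces $\omega_u$ followed by the Graded sliding property and a re-indexing $u\mapsto u+\deg(\nu)$ modulo $d$. For (b) the paper likewise reduces to (a) by reversing the orientation of the relevant component and dualizing its color (Corollary on orientation reversal), so that the factor $e_d^{-v\deg(\nu)}$ is cancelled by $e_d^{-v\deg(\nu^*)}=e_d^{v\deg(\nu)}$ --- precisely the cancellation mechanism you describe.
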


\begin{proof}
(a) We have the following equalities:

\begin{align*}
 \psdiag{13}{28}{slide1d}&=\sum_{\lambda\in \Gamma} e_{d}^{v\deg(\lambda)} \langle\lambda\rangle \psdiag{13}{28}{slide111}
 =\sum_{\lambda \in \Gamma}\sum_{\mu\in \Gamma_{\deg(\lambda)+\deg(\nu)}}\sum_{i\in I_{\lambda\nu}^{\mu}}e_{d}^{v\deg(\lambda)} \langle\lambda\rangle \langle\mu\rangle\psdiag{13}{28}{slide2}\\
 &=e_{d}^{-v\deg(\nu)}\sum_{\mu\in \Gamma}\sum_{\lambda\in \Gamma_{\deg(\mu)-\deg(\nu)}}e_{d}^{v\deg(\mu)}\langle\lambda\rangle\langle\mu\rangle
 \psdiag{13}{28}{slide3}\\
 &=e_{d}^{-v\deg(\nu)}\sum_{\mu\in \Gamma}e_{d}^{v\deg(\mu)}\langle\mu\rangle \psdiag{13}{28}{slide41}
 =e_{d}^{-v\deg(\nu)}\psdiag{13}{28}{slide42d}.
\end{align*}
\noindent
The second and fourth equalities follow from the \emph{fusion formula}, the third equality holds by isotopy.\\
(b) According to Corollary \ref{or}, given a $\C$--colored ribbon graph $\Omega$ with an annulus component
$a$ colored with the dual Kirby color $\omega^s$, if we consider the ribbon
graph $\Omega'$ obtained from $\Omega$ by reversing the orientation of
$a$ and changing its color to $\omega^{-s}$, then $F_\C(\Omega)= F_\C(\Omega')$.
In particular, we have
\begin{equation*}
 \psdiag{13}{28}{slide1d1}=\psdiag{13}{28}{slide1d11}=\psdiag{13}{28}{slide42d11}=\psdiag{13}{28}{slide42d1},
\end{equation*}

\noindent where the second equality is a straightforward application of (a).
\end{proof}


\section{Spin modular categories}
\label{refined}
This section is devoted to the proof of Theorem 1. 
Motivated by the known examples we  will first give the
proof  in the cyclic case.
For $d$--spin modular categories 
we obtain invariants of $3$--manifolds equipped with $\Spin_d$ structure.
In cohomological case we get refined invariants for $3$--manifolds with 
modulo $d$ $1$--dimensional cohomology classes.

\noindent For the rest of this section 
$M=S^3(L)$ is a closed $3$--manifold obtained
by surgery on $S^3$ along a framed link $L=(L_1,\cdots, L_n)$ whose linking matrix $(L_{ij})$
has $b_{+}$ positive and $b_{-}$ negative eigenvalues.

\subsection{Spin refinements, cyclic case}
Let $d$ be an even, positive integer and $\C$ be a $d$--spin modular category.
For any $s=(s_1,\dots, s_n)\in \Spin_d(M)$ let us define
\begin{equation}\label{WRT-spin}
\tau_\C(M, s) =\frac{F_\C(L (\omega_{s_1}, \dots, \omega_{s_n}) )}{F_\C(U_{1}(\omega))^{b_+} F_\C(U_{-1}(\omega))^{b_-}}.
\end{equation}

\begin{thm}\label{main-spin}
For any $d$--spin modular category $\C$,
$\tau_\C(M, s)$ is a topological invariant of the pair $(M, s)$.
Moreover,
$$\tau_\C(M)=\sum_{s \in \Spin_d(M)} \tau_\C(M,s)\, .$$
\end{thm}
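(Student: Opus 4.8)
The plan is to prove Theorem~\ref{main-spin} in two parts: first, invariance of $\tau_\C(M,s)$ under the $\Spin_d$--Kirby moves of Figure~\ref{k2}; and second, the summation formula $\tau_\C(M) = \sum_s \tau_\C(M,s)$.

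For the invariance, I would follow the template of the proof of Theorem~\ref{T}, replacing each ordinary Kirby move with its refined $\Spin_d$ counterpart. The denominator $F_\C(U_1(\omega))^{b_+}F_\C(U_{-1}(\omega))^{b_-}$ is handled exactly as before once we know the eigenvalue count of the linking matrix is unchanged. For the \emph{stabilization} move, adding a $\pm1$--framed unknot colored by $\omega_{s_{n+1}}$: the characteristic solution condition forces a specific value of $s_{n+1}$ (for an isolated unknot the relevant equation reads $\pm s_{n+1} = \frac{d}{2}(\pm 1)$, i.e. $s_{n+1}=\frac{d}{2}$ or $0$ depending on the framing's parity), and one must check that $F_\C(U_{\pm1}(\omega_{s_{n+1}})) = F_\C(U_{\pm1}(\omega))$ up to the correction by $F_\C(U_{\pm1}(\omega))$. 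The essential input here is that $\omega_u$ for the appropriate $u$ has the same ``Gauss sum'' value on a $\pm1$--framed unknot as the full $\omega$, which is where $d$--spin-ness (existence of a twist $-1$ element in $H$) enters to kill the cross terms. For the \emph{handle slide}, I would invoke the Graded sliding property (Lemma, graded sliding) and its proof, checking that sliding $\omega_{s_i}$ over $\omega_{s_j}$ transforms the labels by $s_i \mapsto s_i + s_j$ (matching Figure~\ref{k2}(b)) and preserves the characteristic solution set $\mathcal{S}_d(L)$ under the corresponding change of linking matrix $L \mapsto {}^tS L S$. For \emph{orientation reversal} of $L_k$, I would use Corollary~\ref{or}: reversing an annulus colored by $\omega_{s_k}$ and dualizing changes the color to $\omega_{-s_k}$ (since $\langle\l^*\rangle = \langle\l\rangle$ and $\deg(\l^*) = -\deg(\l)$), which is precisely the label change in Figure~\ref{k2}(c); the linking matrix changes by conjugation with $\diag(-1,1,\dots,1)$ so eigenvalues are preserved. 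I would need to confirm that $\phi_L$ (here $\psi_L$) intertwines these geometric moves with the combinatorial ones, which is where Theorem~\cite{Bl1} on $\Spin_d$--Kirby moves is the backbone.

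For the summation formula, the key observation is that $\omega = \sum_{u=0}^{d-1} \omega_u$ as an element of $K_0(\C^\oplus)\otimes\k$, since $\G = \coprod_u \G_u$. Substituting into \eqref{WRT} and using multilinearity of $F_\C$ in the link colors gives
$$F_\C(L(\omega,\dots,\omega)) = \sum_{(s_1,\dots,s_n)\in(\Z_d)^n} F_\C(L(\omega_{s_1},\dots,\omega_{s_n})).$$
The content is then that the terms indexed by tuples \emph{not} lying in $\mathcal{S}_d(L) = \Spin_d(S^3(L))$ vanish. This is the heart of the matter: for a tuple $s$ failing the characteristic equation $\sum_j L_{ij}s_j \equiv \frac{d}{2}L_{ii} \pmod d$ for some $i$, one shows $F_\C(L(\omega_{s_1},\dots,\omega_{s_n})) = 0$ by the standard argument of encircling the $i$-th component with an $\omega$-colored loop (or examining the degree constraint): pushing degrees around a closed component forces a root-of-unity factor $e_d^{(\text{something})}$ to equal $1$, and when it doesn't, a geometric-series / orthogonality-of-characters sum collapses to zero. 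Concretely, I expect to use that the twist element of order $2$ in $H$, when slid along $L_i$, multiplies the evaluation by $(-1)^{L_{ii}} e_d^{\sum_j L_{ij} s_j \cdot (\text{spin character})}$ or similar, and the vanishing is exactly the mismatch. Once the non-characteristic terms are dropped, the surviving sum over $\mathcal{S}_d(L)$ divided by the (common, $s$-independent) denominator gives $\sum_{s\in\Spin_d(M)}\tau_\C(M,s)$.

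The main obstacle, I expect, is the stabilization/denominator bookkeeping together with the vanishing-of-non-characteristic-terms step: one must pin down the precise root-of-unity phase picked up when the order-$2$ twist element (equivalently the spin character $v$) is slid along a link component, and verify it is $-1$ raised to the framing $L_{ii}$ times a character evaluated on the linking data --- this is what simultaneously makes the non-characteristic terms vanish, makes the characteristic condition the \emph{right} one, and makes the refined invariant well-defined under stabilization. The cohomological (non-spin) case is formally the same computation with the $\frac{d}{2}L_{ii}$ term replaced by $0$, so I would treat it in parallel or as a remark at the end.
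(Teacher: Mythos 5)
Your proposal is correct and its overall architecture coincides with the paper's: handle slides via the graded sliding property, orientation reversal via Corollary \ref{or} together with $\langle\l^*\rangle=\langle\l\rangle$ and $\deg(\l^*)=-\deg(\l)$, stabilization via the fact that $F_\C(U_{\pm1}(\omega_u))$ vanishes unless $u=\frac{d}{2}$ (the paper's Lemma \ref{cancel}, proved by a Gauss--sum computation on the Hopf link $H_{1,0}(\omega_0,\omega_u)$ using $\theta_t=-1$ and $\langle t\rangle=\pm1$), and the summation formula from $\omega=\sum_u\omega_u$ plus multilinearity plus the vanishing of non-characteristic terms. The one place where you genuinely diverge is the proof of that vanishing. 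You propose the direct argument: replace the color of $L_i$ by $t\otimes\omega_{s_i}$, which algebraically returns $\langle t\rangle\,\omega_{s_i}$ but, after isolating the invertible $t$-cable geometrically, multiplies the evaluation by $\langle t\rangle\,\theta_t^{L_{ii}}e_d^{\sum_jL_{ij}s_j}$; equating the two forces $\sum_jL_{ij}s_j\equiv\frac{d}{2}L_{ii}\pmod d$ whenever the bracket is nonzero. The paper instead runs a three-step geometric reduction: first for a $\pm1$-framed unknotted component (Fenn--Rourke move plus graded sliding plus Lemma \ref{cancel}), then for an arbitrarily framed unknot (adjust the framing by adding a $\pm1$-framed unknot and performing the inverse Fenn--Rourke move), then for an arbitrary component (unknot it by adding $\pm1$-framed unknots with zero linking number). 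Your route is shorter and makes the phase $(-1)^{L_{ii}}e_d^{\sum_jL_{ij}s_j}$ explicit, at the cost of having to justify the cabling/isolation step for invertible objects; the paper's route reuses only the already-established sliding and cancellation lemmas. One small slip: for a $\pm1$-framed stabilizing unknot the characteristic equation always gives $s_{n+1}=\frac{d}{2}$ (the diagonal entry $\pm1$ is odd), never $0$.
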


\begin{proof}
To prove the first statement we need to show that $\tau{(M, s)}$ is well-defined
and does not change under refined Kirby moves.

Invariance under the first two
 $\Spin_d$ Kirby moves follows immediately from
the \emph{graded sliding property} and the next lemma
which  implies that $F_\C(U_\epsilon(w))= F_\C(U_\epsilon (w_{\frac{d}{2}}))$
is invertible for $\epsilon=\pm 1$. In order to prove invariance under
orientation reversal, let us consider the link $L'$ obtained from $L$
by reversing the orientation of a component $L_k$. Without loss of 
generality, we may assume that $k=1$. We have seen in the proof of Theorem
\ref{T} that the linking matrices $(L'_{ij})$ and $(L_{ij})$ have the same
eigenvalues so $b'_+=b_+$, $b'_-=b_-$.\\
Since $s'=(-s_1,s_2,\cdots, s_n)$ applying Corollary \ref{or} gives
\begin{align*}
 F_\C(L'(\omega_{-s_1}, \omega_{s_2},\cdots,\omega_{s_n}))&=\sum_{\l\in \G_{-s_1}}\langle \l \rangle F_\C(L'(\l,\omega_{s_2},\cdots,\omega_{s_n}))\\
 &=\sum_{\l\in \G_{-s_1}}\langle \l^* \rangle F_\C(L(\l^*,\omega_{s_2},\cdots,\omega_{s_n}))\\
 &=\sum_{\mu \in \G_{s_1}}\langle \mu \rangle F_\C(L(\mu,\omega_{s_2},\cdots,\omega_{s_n}))\\
 &=F_\C(L(\omega_{s_1}, \omega_{s_2},\cdots,\omega_{s_n}))
\end{align*}
and therefore $\tau_\C(M,L';s')=\tau_\C(M,L;s)$.
It remains to prove that $F_\C(L (\omega_{s_1}, \dots, \omega_{s_n}) )=0$
if $s_i$ do not solve $\sum_{j=1}^{n} L_{ij} s_j=\frac{d}{2} L_{ii} \pmod d$.
The proof is in three steps.

Assume that the first component  $L_1$ of our link
is the $\pm 1$--framed unknot. Then it can be unlinked from the rest of $L$
by applying the Fenn-Rourke move. The graded sliding identity and
Lemma \ref{cancel}
tell us that $s_1$ should solve the above equations.

Assume
$L_1$ is the $a$--framed unknot. Then we add a $\pm 1$--framed unknot
to our link (with an invertible invariant) and slide it along $L_1$
(perform the inverse Fenn-Rourke move). This
 changes the framing on $L_1$ by $\mp 1$ and
allows to reduce this case to the previous one.

Finally, assume $L_1$ is arbitrary. Then we can unknot $L_1$ by adding
$\pm 1$--framed unknots to our link in such a way, that their linking number 
with $L_1$ is zero. This again reduces the situation to the previous case.
\end{proof}

\begin{lem}\label{cancel} For any $d$--spin  modular category $\C$, 
$F_\C(U_{\pm 1}(w_u)) $ is zero unless $u=\frac{d}{2}$.
\end{lem}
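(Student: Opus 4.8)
The plan is to compute $F_\C(U_{\pm 1}(\omega_u))$ directly using the twist coefficients and the structure of the subcategories $\C_v$. Recall that $U_{\pm 1}(\omega_u)$ is a $\pm 1$--framed unknot colored by $\omega_u = \sum_{\l \in \G_u} \la \l \ra\, \l$, so by linearity of $F_\C$ and the definition of the twist coefficient (Figure \ref{twist}) we obtain
$$F_\C(U_{\pm 1}(\omega_u)) = \sum_{\l \in \G_u} \la \l \ra^2\, \theta_\l^{\pm 1}.$$
So the statement reduces to showing that this sum vanishes unless $u = \tfrac d2$. The idea is to exploit the $H$--spin hypothesis: there is an invertible object, say the generator $t$ of $G \cong \Z_d$ (or more precisely the element of $H$ with twist $-1$), and tensoring by $t$ gives a bijection $\G_u \to \G_{u+1}$ by Lemma stated in Section \ref{graded}. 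The key computation is to compare the twist of $\l$ with the twist of $\l \otimes t$.

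First I would establish the transformation law for twists under tensoring with an invertible object. Using $\theta_{V \otimes W} = b_{W,V} b_{V,W}(\theta_V \otimes \theta_W)$ with $W = t$ and the fact that $t$ has degree $1$ so that $b_{t,\l}b_{\l,t}$ acts as multiplication by a scalar (a root of unity $c_\l$ depending on the character $\chi_\l(t)$, i.e.\ on $\deg \l = u$ only — here one must track whether the relevant element of $H$ is central enough that the monodromy scalar is constant on $\G_u$), one gets $\theta_{\l \otimes t} = c_u\, \theta_\l\, \theta_t = -c_u\, \theta_\l$, where the $-1$ comes from the $H$--spin assumption that the chosen invertible object has twist $-1$. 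Combined with $\la \l \otimes t \ra = \la \l \ra \la t \ra$ and $\la t \ra = \pm 1$ (Lemma \ref{lem21}), this yields $\la \l \otimes t\ra^2 \theta_{\l \otimes t}^{\pm 1} = (\mp c_u)\, \la \l \ra^2 \theta_\l^{\pm 1}$ after absorbing signs; feeding the bijection $\G_u \to \G_{u+1}$ into the sum gives a recursion $F_\C(U_{\pm 1}(\omega_{u+1})) = (\mp c_u^{\pm 1})\, F_\C(U_{\pm 1}(\omega_u))$ relating consecutive values.

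Iterating this recursion $d$ times brings us back to $u$ and forces a compatibility: the product of all the scalars $c_0, \dots, c_{d-1}$ times a power of $(-1)$ must equal $1$, while iterating $d/2$ times relates $\omega_u$ to $\omega_{u + d/2}$. The cleanest route is to observe that the monodromy of $t$ with itself, say $b_{t,t}^2 = \zeta$, is a primitive $(d/2)$-th or $d$-th root of unity (this is where non-degeneracy of the $S$-matrix enters, forcing the form on $G$ to be non-degenerate so that $\zeta \neq 1$ when $d > 2$), and that the scalar by which $F_\C(U_{\pm1}(\omega_u))$ gets multiplied under $u \mapsto u+1$ is $-\zeta^{u}$ (up to inverses and the sign from $\la t\ra$). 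Then the full product over a period is $(-1)^d \zeta^{0+1+\cdots+(d-1)} = \zeta^{d(d-1)/2}$, and one checks that the only value of $u$ fixed by the resulting nontrivial character of $\Z_d$ — equivalently the only $u$ for which the sign/root-of-unity factor accumulated over the orbit is trivial — is $u = d/2$; for every other $u$ the sum $\sum_{\l\in\G_u}\la\l\ra^2\theta_\l^{\pm1}$ equals a nontrivial scalar multiple of itself and hence vanishes.

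The main obstacle I anticipate is the bookkeeping in the second paragraph: showing that the monodromy scalar $b_{t,\l}b_{\l,t}$ really depends only on $\deg\l$ and identifying it precisely (including the correct root of unity and its interaction with $\la t\ra = \pm 1$ and with $\theta_t = -1$), and then checking that the accumulated scalar over the $\Z_d$-orbit is nontrivial for $u \neq d/2$ — this is exactly the point where the \emph{spin} hypothesis (an element of twist $-1$, equivalently $d$ even) as opposed to merely \emph{refinable} is used, since in the non-spin case the analogous argument would instead kill all $\omega_u$ with $u\neq 0$. I would handle this by first treating the generator case and then noting that for a general $H$--spin category one only needs a single invertible object of twist $-1$ lying in $\C_0$ composed with the grading shift, so the same computation applies verbatim with $t$ replaced by that object times a degree-one unit.
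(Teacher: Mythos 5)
Your starting point is correct ($F_\C(U_{\pm1}(\omega_u)) = \sum_{\l\in\G_u}\la\l\ra^2\theta_\l^{\pm1}$), and the mechanism you reach for --- tensoring with the invertible object $t$ and using $\theta_{\l\otimes t}=\chi_\l(t)\,\theta_t\,\theta_\l=-e_d^{u}\theta_\l$ --- is exactly the right one. But the step your whole recursion rests on is false: tensoring with $t$ is \emph{not} a bijection $\G_u\to\G_{u+1}$. By Definition \ref{Hspin}, $d$--spin means $H=\la t\ra\subset\C_0$, i.e.\ $\chi_t$ is the trivial character, so $\deg(\l\otimes t)=\deg(\l)+\deg(t)=\deg(\l)$ and the map $\l\mapsto\l\otimes t$ preserves each $\G_u$. (The grading of $\l$ is the value of $\chi_\l$ \emph{at} $t$; the element $t$ itself sits in degree $0$, and in general there is no invertible object of degree $1$ to substitute, so the fix you sketch in your last paragraph is not available.) Moreover, even granting a degree--shifting bijection, your recursion $F_\C(U_{\pm1}(\omega_{u+1}))=c_u\,F_\C(U_{\pm1}(\omega_u))$ relates \emph{different} values of $u$ and only closes up after a full period; the accumulated scalar around the loop is the same product $\prod_u c_u$ regardless of where you start, so it either forces all the $F_\C(U_{\pm1}(\omega_u))$ to vanish (impossible, since they sum to $F_\C(U_{\pm1}(\omega))\neq0$) or yields no vanishing at all. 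It cannot single out $u=d/2$.

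The repair is to use the (free) action of $\la t\ra$ \emph{within} each $\G_u$. On an orbit $\{t^i\otimes\l\}_{i=0}^{d-1}$ one has $\theta_{t^i}=\theta_t^{\,i}=(-1)^i$ (the self--monodromy of $t$ is $\chi_t(t)=1$ because $t\in\C_0$) and $\la t\ra^2=1$ by Lemma \ref{lem21}, hence $\la t^i\otimes\l\ra^2\,\theta_{t^i\otimes\l}^{\pm1}=\la\l\ra^2\theta_\l^{\pm1}\,(-e_d^{\pm u})^i$. Each orbit therefore contributes a factor $\sum_{i=0}^{d-1}(-e_d^{u})^i$, which vanishes unless $e_d^{u}=-1$, i.e.\ $u=d/2$. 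This is precisely the content of the paper's proof, which packages the orbit decomposition $\omega_u=\sum_i\la t\ra^i t^i\widetilde\omega_u$ into a sliding computation on the Hopf link $H_{1,0}(\omega_0,\omega_u)$, evaluates the product $F_\C(U_1(\omega_u))F_\C(U_{-1}(\omega_u))$, and extracts the same character sum $\sum_i e_d^{ui}(-1)^i$.
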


\begin{proof} 

Recall that invertible objects form an abelian group under
tensor multiplication, which acts on $\G$. In particular,
its cyclic subgroup $H=\la t\ra \cong \Z_d$ acts on each $\G_u$. Let us denote by
$\widetilde \G_u$ the set of orbits under this action and by
$\widetilde \omega_u$ the corresponding reduced Kirby color. Note that
$\omega_u=\sum_{i=0}^{d-1}\langle t\rangle^i t^i \widetilde \omega_u$.
Let $H_{a,b}$ be the $(a,b)$--framed Hopf link with linking
matrix $-1$.
\begin{figure}[h!]
\begin{center}
\psfrag{a}{$\omega_u$}
\psfrag{b}{$\omega_0$}
\includegraphics{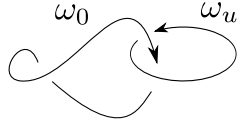}
\caption{The Hopf link $H_{1,0}(\omega_0,\omega_u)$.}
\end{center}
\end{figure}

After sliding the second component of $H_{(1,0)}(\omega_0,\omega_u)$ along the first one
we get
\begin{align*} 
 F_\C(U_1(w_u)) F_\C(U_{-1}(w_u))& =
F_\C(H_{1,0}(\omega_0, \omega_u))\\ & =\sum_{i,j=0}^{d-1}\langle t\rangle^{i+j}
F_\C(H_{1,0}(t^i\widetilde \omega_0,t^j \widetilde \omega_u)) \\ &=
\sum_{i,j=0}^{d-1}\langle t\rangle^{i+j} 
F_\C(H_{1,0}(\widetilde \omega_0,\widetilde \omega_u))
e^{u i}_{d} \theta_t^i \langle t\rangle^{i+j} \\ &= dF_\C(H_{1,0}(\widetilde \omega_0,\widetilde \omega_u))
\sum_{i=0}^{d-1}e^{u i}_{d}(-1)^i
\end{align*} 
which is zero unless $u=\frac{d}{2} \pmod{d}$.
Here we used that 
$\theta_t=-1$ and $\la t\ra=\pm 1$, by Lemma \ref{lem21}. Since
 $F_\C(U_1(w_u))$ and $ F_\C(U_{-1}(w_u))$ are complex conjugate to each other,
the result follows.


\end{proof}

\subsection{Cohomological refinements}
In this subsection we 
assume that $\C$ is a non-spin $d$--refinable modular category.


The elements $h\in H^1(M; \Z_d)$ are combinatorially given
by solutions of $\sum_{j=1}^{n} L_{ij} h_j=0 \pmod d, \forall i=1,...,n.$
Let $L$ be an oriented framed link and $h\in H^1(S^3(L);\Z_d)$. The usual Kirby moves admit 
refinements for manifolds equipped with such structures as follows:
\begin{itemize}
 \item Stabilization: $(L,h) \longrightarrow (L\sqcup U_{\pm 1}, (h,0))$;
 \item Handle slide: $(L,h)\longrightarrow (L',h')$ where $L'$ is obtained from $L$
 by sliding component $L_i$ along $L_j$ and $h'_k=h_k$ if $k\neq j$ and
 $h'_j=h_j\mp h_i$. Here the sign depends on whether the orientations of $L_i$ and $L_j$ match or not, respectively;
 \item Orientation reversal: $(L,h)\longrightarrow (L',h')$ where $L'$ is obtained from $L$ by reversing the orientation 
 of component $L_i$ and $h'_j=h_j$ if $j\neq i$ and $h'_i=-h_i$.
\end{itemize}

For any $h=(h_1,\dots, h_n)\in H^1(M;\Z_d)$
let us define
\begin{equation}\label{WRT-coho}
\tau_\C(M,h) =\frac{F_\C(L (\omega_{h_1}, \dots, \omega_{h_n}) )}{F_\C(U_{1}(\omega))^{b_+} F_\C(U_{-1}(\omega))^{b_-}}.
\end{equation}

\begin{thm}\label{main-coho}
For any non-spin $d$--refinable modular category,
$\tau_\C(M, h)$ is a topological invariant of the pair $(M, h)$.
Moreover,
$$\tau_\C(M)=\sum_{h \in H^1(M;\Z_d)} \tau_\C(M,h)\, .$$
\end{thm}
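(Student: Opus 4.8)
The plan is to mimic the structure of the proof of Theorem~\ref{main-spin} almost verbatim, replacing $\Spin_d$--structures with $H^1(M;\Z_d)$--classes throughout. First I would establish that $\tau_\C(M,h)$ is well-defined and invariant under the three refined Kirby moves listed above: invariance under stabilization and handle slide follows directly from the \emph{graded sliding property}, together with the fact that $F_\C(U_{\pm 1}(\omega)) = F_\C(U_{\pm 1}(\omega_0))$ is invertible — here, crucially, since $\C$ is non-spin $d$--refinable, the twist coefficient $\theta_t$ of the generator $t$ of $H$ equals $+1$ rather than $-1$, so the analogue of Lemma~\ref{cancel} now forces $F_\C(U_{\pm 1}(\omega_u))$ to vanish unless $u = 0$ (the sum $\sum_{i=0}^{d-1} e_d^{ui}\cdot 1$ is $d\delta_{u,0}$), which is exactly why the refined Kirby color attached to a stabilizing $\pm 1$--framed unknot must carry label $0$. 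Invariance under orientation reversal is the same computation as in Theorem~\ref{main-spin}, using Corollary~\ref{or}: reversing $L_k$ and replacing $\omega_{h_k}$ by $\omega_{-h_k}$ leaves $F_\C$ unchanged because $\langle\lambda\rangle = \langle\lambda^*\rangle$ and $\lambda \in \G_{-h_k} \iff \lambda^* \in \G_{h_k}$, while the linking matrix eigenvalue signs $b_\pm$ are preserved.

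Next I would prove that $F_\C(L(\omega_{h_1},\dots,\omega_{h_n})) = 0$ whenever $(h_1,\dots,h_n)$ fails to satisfy $\sum_j L_{ij} h_j = 0 \pmod d$ for some $i$; this is the analogue of the three-step argument at the end of the proof of Theorem~\ref{main-spin}, with $\frac d2 L_{ii}$ replaced by $0$ (the cohomological condition has no diagonal correction precisely because $\theta_t = +1$). The three reduction steps are identical: (i) if $L_1$ is a $\pm 1$--framed unknot, unlink it by a Fenn--Rourke move and apply the graded sliding identity together with the modified cancellation lemma to conclude $h_1$ must solve the relevant equation; (ii) if $L_1$ is an $a$--framed unknot, add a $\pm 1$--framed unknot carrying an invertible invariant, slide it to change the framing of $L_1$ to $a \mp 1$, and iterate down to case (i); (iii) for arbitrary $L_1$, add $\pm 1$--framed unknots with zero linking number with $L_1$ to unknot it, reducing to case (ii). Running this for each index $i$ gives the full system of constraints, so $F_\C(L(\omega_{h_1},\dots,\omega_{h_n}))$ is supported exactly on the set of combinatorial $1$--cocycles, i.e.\ on $H^1(S^3(L);\Z_d)$.

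For the summation formula, I would simply expand $\omega = \sum_{u=0}^{d-1}\omega_u$ in each slot of $F_\C(L(\omega,\dots,\omega))$, obtaining
$$\tau_\C(M) = \sum_{(h_1,\dots,h_n)\in(\Z_d)^n} \frac{F_\C(L(\omega_{h_1},\dots,\omega_{h_n}))}{F_\C(U_1(\omega))^{b_+}F_\C(U_{-1}(\omega))^{b_-}} = \sum_{h\in H^1(M;\Z_d)} \tau_\C(M,h),$$
where the second equality uses the vanishing just established to discard every term whose multi-index is not a characteristic solution, and the combinatorial description of $H^1(M;\Z_d)$ as the solution set of $\sum_j L_{ij}h_j = 0 \pmod d$ to reindex the surviving terms.

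I expect the main obstacle to be the careful bookkeeping in the modified cancellation lemma and its use inside the Fenn--Rourke reduction: one must check that replacing $\theta_t = -1$ by $\theta_t = +1$ (the defining feature of the non-spin case) consistently changes the characteristic equation's right-hand side from $\frac d2 L_{ii}$ to $0$ in all three reduction steps, and in particular that the framing-change bookkeeping in step (ii) tracks correctly. Everything else is a routine transcription of the spin-case argument, and the well-definedness of $H^1$ under the refined Kirby moves is immediate from the combinatorial description.
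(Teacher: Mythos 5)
Your proposal is correct and follows essentially the same route as the paper: the paper's own proof consists of Lemma \ref{vanish} (the $u=0$ cancellation, proved by the same Hopf-link sliding computation with $\theta_t=+1$ removing the $(-1)^i$ factor) together with an implicit transcription of the Theorem \ref{main-spin} argument, which is exactly what you carry out.
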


\noindent
The proof
is based on the following lemma.

\begin{lem} 
\label{vanish}
For any non-spin $d$--refinable modular category $\C$, 
$F_\C(U_{\pm 1}(w_u)) $ is zero unless $u=0$.
\end{lem}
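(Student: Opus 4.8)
For any non-spin $d$--refinable modular category $\C$, $F_\C(U_{\pm 1}(\omega_u))$ is zero unless $u=0$.

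\begin{proof}[Proof plan]
The plan is to imitate the argument of Lemma \ref{cancel}, replacing the spin input $\theta_t=-1$ by the non-spin hypothesis, which forces $\theta_t=+1$ on the relevant cyclic subgroup. First I would recall the setup: since $\C$ is $d$--refinable, the subgroup $H=\la t\ra\cong\Z_d$ lies in $\C_0$, and it acts on each $\G_u$ by tensor multiplication. Write $\widetilde\G_u$ for the set of orbits, $\widetilde\omega_u$ for the corresponding reduced Kirby color, and note as before that $\omega_u=\sum_{i=0}^{d-1}\la t\ra^i\, t^i\,\widetilde\omega_u$. The point of the non-spin hypothesis enters here: by Definition \ref{Hspin}, ``non-spin'' means no element of $H$ has twist coefficient $-1$; combined with Lemma \ref{lem21} (the twist of a generator of a cyclic group of invertibles is $\pm1$, and is $1$ when $d$ is odd), this gives $\theta_t=+1$. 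Hence $\la t\ra=\pm1$ and $\theta_t=+1$ are the two facts I will feed into the Hopf link computation.

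Next I would run the same sliding computation as in Lemma \ref{cancel}. Slide the second component of the $(1,0)$--framed Hopf link $H_{1,0}(\omega_0,\omega_u)$ along the first one, using the graded sliding property, and expand both Kirby colors into orbit representatives:
\begin{align*}
F_\C(U_1(\omega_u))\,F_\C(U_{-1}(\omega_u))
&=F_\C(H_{1,0}(\omega_0,\omega_u))\\
&=\sum_{i,j=0}^{d-1}\la t\ra^{i+j}\,F_\C\bigl(H_{1,0}(t^i\widetilde\omega_0,\,t^j\widetilde\omega_u)\bigr)\\
&=F_\C\bigl(H_{1,0}(\widetilde\omega_0,\widetilde\omega_u)\bigr)\sum_{i,j=0}^{d-1}\la t\ra^{i+j}\,e_d^{u i}\,\theta_t^{\,i}\,\la t\ra^{i+j}\\
&=d\,F_\C\bigl(H_{1,0}(\widetilde\omega_0,\widetilde\omega_u)\bigr)\sum_{i=0}^{d-1}e_d^{u i},
\end{align*}
where the third line collects the braiding/twist contribution of moving a $t^i$ past the $u$--graded factor exactly as in Lemma \ref{cancel}, and the last line uses $\theta_t=+1$ and $\la t\ra^2=1$ so that $\la t\ra^{2(i+j)}=1$ and the sum over $j$ contributes a factor $d$. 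The geometric series $\sum_{i=0}^{d-1}e_d^{ui}$ vanishes unless $u\equiv 0\pmod d$, since $e_d$ is a primitive $d$th root of unity. Therefore $F_\C(U_1(\omega_u))\,F_\C(U_{-1}(\omega_u))=0$ whenever $u\neq 0$; and since $F_\C(U_1(\omega_u))$ and $F_\C(U_{-1}(\omega_u))$ are complex conjugates of one another, each factor vanishes, which is the claim.

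The only point requiring care — and the step I expect to be the genuine obstacle rather than bookkeeping — is justifying that the non-spin hypothesis really forces $\theta_t=+1$ on all of $H$, i.e. that it suffices to control the twist of a single generator. This is exactly where the sentence ``the order of an element with twist $-1$ has to be even'' from the introduction, together with Lemma \ref{lem21}, does the work: if $d$ is odd then $\theta_t=1$ automatically, and if $d$ is even then $\theta_t\in\{\pm1\}$ by Lemma \ref{lem21}, so $\theta_t=-1$ would make $\C$ $H$--spin, contrary to assumption; hence $\theta_t=+1$. Once this is in place, the computation above is word-for-word parallel to Lemma \ref{cancel} with the sign $(-1)^i$ replaced by $1$, and the cohomological version of Theorem \ref{main-coho} follows by the same three-step reduction (Fenn--Rourke moves to unknot and unlink components) used in Theorem \ref{main-spin}.
\end{proof}
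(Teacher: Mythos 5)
Your proof is correct and follows essentially the same route as the paper: the paper's own proof of this lemma is precisely the Hopf-link/orbit computation of Lemma \ref{cancel} with $\theta_t=+1$ in place of $\theta_t=-1$, yielding $d\,F_\C(H_{1,0}(\widetilde\omega_0,\widetilde\omega_u))\sum_{j=0}^{d-1}e_d^{uj}$, which vanishes unless $u=0$. One small correction to your justification: Lemma \ref{lem21} concerns the quantum dimension $\la t\ra$, not the twist; the fact that $\theta_t=\pm1$ (hence $+1$ in the non-spin case) follows from $d$--refinability --- $t\in\C_0$ forces $b_{t,t}^2=1$ --- combined with $\la t\ra=\pm1$ from Lemma \ref{lem21}.
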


\begin{proof}
 Just as in the proof of Lemma \ref{cancel}, we consider 
 the Hopf link $H_{1,0}(\omega_0, \omega_u)$ with linking 
 number $-1$ and we slide the second component
 along the first one. 
 We get 
  \begin{align*} 
 F_\C(U_1(w_u)) F_\C(U_{-1}(w_u))& =
F_\C(H_{1,0}(\omega_0, \omega_u))\\ & =d
F_\C(H_{1,0}(\widetilde \omega_0, \widetilde \omega_u)) 
 \sum^{d-1}_{j=0} 
 e^{u j}_{d} 
\end{align*}
which is zero, unless $u=0$.

\end{proof}

\subsection{Spin refinements, general case}
The proof of Theorem 1 follows from the two previous cases.
 An $(\widehat{H},v)$ generalized spin structure $\sigma$ on $M=S^3(L)$
 is described by a sequence of coefficients $(s_1,\dots,s_n)\in \widehat H^n$ satisfying a characteristic equation:
 $$(s_1,\dots,s_n)(\widehat H\otimes L)=v\otimes (L_{11},\dots,L_{nn})\ $$
Indeed, the Kirby element decomposes using $\widehat H$-grading, and the formula for the refined invariant given in \eqref{WRT-spin} still holds.
The condition for non vanishing in Lemma \ref{cancel} is $u=v$.

The solution of these equations is a sequence of $\sigma_i$ where the index $i$ runs over 
cyclic components  $H_i\cong \Z_{d_i}$ of $H$. Moreover, either $\sigma_i\in \Spin_{d_i}(M)$ or $\sigma_i\in H^1(M,\Z_{d_i})$,
depending on the twist coefficient of the corresponding generator. Combining the two previous theorems we get
$$ \tau_\C (M)=\sum_\sigma \tau_\C(M,\sigma).$$

\section{
Complex spin refinements}
This section is devoted to the proof of Theorem 2. 
Again, according to the twist coefficients, we will either get
 an extension of WRT invariants for $3$--manifolds equipped
 with modulo $d$ complex spin structures or with $2$--dimensional cohomology classes.

Throughout this section $\C$ is a $2d$--spin modular category with
$d$ even.
\noindent For any $\sigma \in \mathcal{S}^c_d(M)$, let us define
\begin{equation}\label{WRT-spinc}
\tau_\C(M, \sigma) =(-d)^{-n}\sum_{(\epsilon_1,...,\epsilon_n)\in \ \sigma}
\frac{F_\C(L (\omega^{\epsilon_1}, \dots, \omega^{\epsilon_n}) )}
{F_\C(U_{1}(\omega))^{b_+} F_\C(U_{-1}(\omega))^{b_-}}
\end{equation}
where the shorthand $(\epsilon_1,...,\epsilon_n)\in \sigma$ means that the summation is taken over all
elements of $(\Z_{2d})^n$ in the equivalence class of $\sigma$.


\begin{thm}\label{main-spinc} Let $d$ be an even integer.
For any $(2d)$--spin modular category $\C$, 
$\tau_\C(M, \sigma)$ is a topological invariant of the pair $(M, \sigma)$.
\end{thm}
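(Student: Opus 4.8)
The plan is to mimic the proof of Theorem \ref{main-spin}, replacing $\Spin_d$ by $\Spin^c_d$ and the refined Kirby colors $\omega_{s_i}$ by the dual refined Kirby colors $\omega^{\epsilon_i}$, and to use the combinatorial description of $\Spin^c_d(S^3(L))$ together with the refined Kirby calculus of Theorem \ref{Kirby}. First I would note that since $\C$ is $2d$--spin with $d$ even, it is in particular a $2d$--refinable modular category, so $\C$ carries the structure of a modular $2d$--category and the graded/dual sliding identities of Section \ref{graded} apply; moreover the defining quantity \eqref{WRT-spinc} is a sum over the $(2d)^?$ representatives in the class $\sigma\in\mathcal{S}^c_d(L)$, each term being a genuine $\widehat{G}$--graded colored link invariant divided by the same normalization $F_\C(U_1(\omega))^{b_+}F_\C(U_{-1}(\omega))^{b_-}$, which is invertible by the non-degeneracy axiom. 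So $\tau_\C(M,\sigma)$ is at least well-defined once we know the sum over a class is finite and independent of the chosen set of representatives; that is immediate from the definition of $\mathcal{S}^c_d(L)=\{(\sigma_i)\in(\Z_{2d})^n\mid \sigma_i\equiv L_{ii}\ (2)\}/2\Im L$.

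Next I would establish invariance under the three refined Kirby moves of Figure \ref{k1} (stabilization, handle slide, orientation reversal), using Theorem \ref{Kirby} which guarantees these suffice. For \textbf{orientation reversal} of a component $L_k$, Corollary \ref{or} shows that reversing the orientation of the $\omega^{\epsilon_k}$--colored annulus and changing its color to $\omega^{-\epsilon_k}$ leaves $F_\C$ unchanged; since the orientation-reversal move on $\mathcal{S}^c_d$ sends $\epsilon_k\mapsto -\epsilon_k$ (with the appropriate change in the off-diagonal linking entries), the class $\sigma$ and the summation range match up, and the eigenvalue count $b_\pm$ is unchanged exactly as in Theorem \ref{T}. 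For the \textbf{handle slide} I would invoke the dual sliding property (Proposition, Dual sliding property, part (a)): sliding one $\omega^{\epsilon}$--colored strand along another produces precisely the factor $e_{2d}^{-\epsilon\deg(\nu)}$ that, summed over the class, reproduces the prescribed change of Chern vector under the move; here the key bookkeeping is that the shift in the diagonal framing $L_{ii}$ and the $2\Im L$ equivalence conspire so that the total sum over representatives is preserved. For \textbf{stabilization} $(L,\sigma)\mapsto(L\sqcup U_{\pm1},(\sigma,c))$ the new unknot contributes a factor $F_\C(U_{\pm1}(\omega^{c}))$, and one adjusts $b_\pm$ accordingly; the crucial input is that $F_\C(U_{\pm1}(\omega^{c}))$ is \emph{nonzero only for the correct residue of $c$}, and when nonzero it equals the corresponding factor appearing in the normalization — this is the $2d$--spin analogue of Lemma \ref{cancel}, which I would prove by the same Hopf-link-sliding computation: writing $\omega^c=\sum_{i=0}^{2d-1}e_{2d}^{c\cdot i}\langle t\rangle^i t^i\,\widetilde\omega$ and using $\theta_t=-1$, $\langle t\rangle=\pm1$ from Lemma \ref{lem21} gives a geometric sum $\sum_{i}e_{2d}^{ci}(-1)^i$ that vanishes unless $c\equiv d\ (2d)$. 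Summing \eqref{WRT-spinc} over the class then behaves well, and the overall prefactor $(-d)^{-n}$ is exactly what is needed to make stabilization neutral.

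The \textbf{main obstacle} I anticipate is the precise compatibility of the $(\Z_{2d})^n/2\Im L$ equivalence with the handle-slide move at the level of the weighted sums: one must check that after the slide the family of representatives $(\epsilon_1,\dots,\epsilon_n)$ of the new class $\sigma'$, together with the sliding phase factors $e_{2d}^{-\epsilon\deg(\nu)}$ coming from the dual sliding property, reassembles term-by-term into the family for $\sigma$, including getting the diagonal-framing shift $L_{ii}\mapsto L_{ii}\pm 2L_{ij}\pm\cdots$ correct. This is essentially the $\Spin^c$ analogue of the verification in \cite{Massuyeau2} of how Chern vectors transform, combined with the bilinear-form computation underlying the grading $\deg$; I would organize it by first treating the case where $L_j$ is an $\pm1$--framed unknot unlinked from the rest (via a Fenn--Rourke move, as in the three-step argument at the end of the proof of Theorem \ref{main-spin}), then reducing the general slide to that case. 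Once this bookkeeping lemma is in place, the rest of the argument is the routine assembly already carried out for Theorems \ref{main-spin} and \ref{main-coho}.
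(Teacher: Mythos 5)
Your overall strategy --- checking invariance under the three refined $\Spin^c_d$ Kirby moves of Theorem \ref{Kirby}, using Corollary \ref{or} for orientation reversal and the dual sliding property for handle slides --- is the paper's, and those two steps are handled correctly. The genuine problem is in your stabilization step, where the key claim is false. You assert that $F_\C(U_{\pm 1}(\omega^{c}))$ vanishes unless $c\equiv d \pmod{2d}$, as a ``$2d$--spin analogue of Lemma \ref{cancel}''. But Lemma \ref{cancel} is a statement about the \emph{graded} Kirby colors $\omega_i$, not the dual ones: applied to the $2d$--spin category $\C$ it gives $F_\C(U_{\pm1}(\omega_i))=0$ unless $i=d$. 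Since $\omega^{c}=\sum_{i=0}^{2d-1}e_{2d}^{ci}\,\omega_i$, this yields
$$F_\C(U_{1}(\omega^{c}))=e_{2d}^{cd}\,F_\C(U_1(\omega_d))=(-1)^{c}\,F_\C(U_1(\omega)),$$
which is \emph{never} zero. Worse, if your claimed vanishing were true the theorem would be vacuous: the Chern coordinate $c$ of a stabilizing $\pm1$--framed component must satisfy $c\equiv \pm 1\pmod 2$, i.e.\ $c$ is odd, while $d$ is even by hypothesis, so $c\equiv d\pmod{2d}$ can never hold and every stabilization term --- hence the whole invariant --- would be forced to vanish. Your intermediate formula $\omega^{c}=\sum_i e_{2d}^{ci}\la t\ra^i t^i\widetilde\omega$ is the source of the confusion: since $\C$ is $2d$--refinable, $t\in\C_0$ and tensoring with $t^i$ does not shift the degree, so this expression is (up to normalization) the orbit decomposition of a single graded piece, not the degree-weighted sum defining $\omega^{c}$.

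The correct mechanism, and the reason for the prefactor $(-d)^{-n}$ in \eqref{WRT-spinc}, is that stabilization is neutral only after summing over the equivalence class: each of the $d$ odd representatives $c=1+x$ ($x$ even) of the new coordinate contributes $F_\C(U_1(\omega^{c}))=(-1)^{c}F_\C(U_1(\omega))=-F_\C(U_1(\omega))$, so the total extra factor is $(-d)\,F_\C(U_1(\omega))$, which is cancelled by the extra $(-d)^{-1}$ from the normalization prefactor together with the extra $F_\C(U_1(\omega))$ in the denominator coming from $b_+\mapsto b_+ + 1$. Once you replace your vanishing claim by this computation (which is precisely the paper's: expand $\omega^{1+x}$ in the $\omega_i$, apply Lemma \ref{cancel}, and evaluate $\sum_{x\ \mathrm{even}}e_{2d}^{(1+x)i}$), the rest of your argument goes through as in Theorems \ref{main-spin} and \ref{main-coho}.
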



\begin{proof}
In order to prove that $\tau_\C(M, \sigma)$ is a topological invariant
of the $\Spin^c_d$--manifold $(M,\sigma)$,
we have to check invariance under the $\Spin_d^c$ Kirby moves of Theorem \ref{Kirby}.

We start by checking invariance under the first Kirby move.
Let $(L',\sigma')$ be obtained from $(L, \sigma)$ by a positive stabilization. We have that
\begin{eqnarray*}
\tau_\C(M,L';\sigma')&=&(-d)^{-(n+1)}\sum_{(\epsilon_1,...,\epsilon_{n+1})\in \sigma'}
\frac{F_\C(L (\omega^{\epsilon_1}, \dots, \omega^{\epsilon_n}) )F_\C(U_1(\omega^{\epsilon_{n+1}}))}
{F_\C(U_{1}(\omega))^{b_+ +1} F_\C(U_{-1}(\omega))^{b_-}}\\
&=&(-d)^{-n}\sum_{(\epsilon_1,...,\epsilon_n)\in \sigma}
\frac{F_\C(L (\omega^{\epsilon_1}, \dots, \omega^{\epsilon_n}) )}
{ F_\C(U_{1}(\omega))^{b_+} F_\C(U_{-1}(\omega))^{b_-}} \sum_{\epsilon_{n+1}\in 1+2 \mathbb{Z}_{2d}}
\frac{F_\C(U_1(\omega^{\epsilon_{n+1}}))}{(-d)F_\C(U_{1}(\omega))}\\
&=&\tau_\C(M,L;\sigma) \sum_{\substack{x=0\\
       x \  \text{\ even}}}^{2d-1}
\frac{F_\C(U_1(\omega^{1+x}))}{(-d)F_\C(U_1(\omega))}.
\end{eqnarray*}

To compute the sum $\sum_{\substack{x=0\\ x\ \text{even}}}^{2d-1}F_\C(U_1(\omega^{1+x}))$,
we write the dual Kirby color $\omega^{1+x}$ in terms of the refined (graded) Kirby colors $\omega_i$
as follows:
\begin{eqnarray*}
 \omega^{1+x}&=&\sum_{i=0}^{2d-1}\sum_{\l\in \Gamma_i} e_{2d}^{(1+x)i}\la \l \ra \l =\sum_{i=0}^{2d-1}e_{2d}^{(1+x)i}\omega_i.
\end{eqnarray*}

Lemma \ref{cancel} together with the identity
$ \sum_{\substack{x=0\\ x \ \text{even}}}^{2d-1}e_{2d}^{(1+x)i} =
\left\{
	\begin{array}{ll}
		0  & \mbox{if } i \neq d \\
		-d & \mbox{if }  i=d
	\end{array}
\right.
$
gives
\begin{eqnarray*}\sum_{\substack{x=0\\
                 x\ \text{even}}}^{2d-1}F_\C(U_1(\omega^{1+x}))&=&
\sum_{i=0}^{2d-1}\sum_{\substack{
x=0\\
x \ \text{even}}}^{2d-1}e_{2d}^{(1+x)i}F_\C(U_1(\omega_i))
\\
&=& -d  F_\C(U_1(\omega))
\end{eqnarray*}
so $\tau_{\C}(M,L';\sigma')=\tau_{\C}(M,L;\sigma)$.

Analogously, $\tau_{\C}(M, \sigma)$ is invariant under a negative stabilization.
The invariance under the second Kirby move is provided by the \emph{dual sliding property}.
Finally, we must check invariance under orientation reversal. For that, let $(L', \sigma')$ 
be obtained from $(L,\sigma)$
by changing the orientation of a component $L_k$.
Without any loss of generality, we may assume that $k=1$ and, just like in the proof of Theorem \ref{T},
we get $b'_+=b_+$, $b'_-=b_-$. We have that
\begin{align*}
 \sum_{(\epsilon'_1,\cdots,\epsilon'_n)\in \sigma'}F_\C(L'(\omega^{\epsilon'_1},\cdots, \omega^{\epsilon'_n}))&=\sum_{(\epsilon_1,\cdots,\epsilon_n)\in \sigma}F_\C(L'(\omega^{-\epsilon_1},\omega^{\epsilon_2}, \cdots, \omega^{\epsilon_n}))\\
&=\sum_{(\epsilon_1,\cdots,\epsilon_n)\in \sigma} \sum_{\l\in \Gamma}e_{2d}^{-\epsilon_1 \deg(\l)}\langle \l \rangle F_\C(L'(\l,\omega^{\epsilon_2}, \cdots, \omega^{\epsilon_n}))\\
&=\sum_{(\epsilon_1,\cdots,\epsilon_n)\in \sigma} \sum_{\l\in \Gamma}e_{2d}^{\epsilon_1 \deg(\l^*)}\langle \l^* \rangle F_\C(L(\l^*,\omega^{\epsilon_2}, \cdots, \omega^{\epsilon_n}))\\
&=\sum_{(\epsilon_1,\cdots,\epsilon_n)\in \sigma} \sum_{\mu \in \Gamma}e_{2d}^{\epsilon_1 \deg(\mu)}\langle \mu \rangle F_\C(L(\mu,\omega^{\epsilon_2}, \cdots, \omega^{\epsilon_n}))\\ 
&=\sum_{(\epsilon_1,\cdots,\epsilon_n)\in \sigma} F_\C(L(\omega^{\epsilon_1},\cdots,\omega^{\epsilon_n})).
 \end{align*}
 The first equality above is a consequence of the fact that $(L'_{ij})={^tS}(L_{ij})S$, for $S=\diag(-1,1,\cdots,1)$ while the 
 third equality is an immediate application of Corollary \ref{or}. This concludes the proof.
\end{proof}

\subsection{Homological refinements} Let $d$ be a positive integer and
$\mathcal{C}$ be a non--spin $d$--refinable modular category.

The group $H_1(M;\Z_d)$ is described combinatorially as the set $(\mathbb{Z}_d)^n/\Im L$.
The  Kirby moves for the pair $(M, h)$ where $M$ is obtained by surgery on a link $L$ and
$h\in H_1(M;\Z_d)$ can be described as follows:
\begin{itemize}
 \item Stabilization: $(L,h)\longrightarrow (L\sqcup U_{\pm 1}, (h,0))$;
 \item Handle slide: $(L,h)\longrightarrow (L',h')$ where $L'$ is obtained from $L$
 by sliding component $L_i$ along $L_j$ and $h'_k=h_k$ if $k\neq i$ and $h'_i=h_i\pm h_j$.
 Here the sign depends on whether the orientations of $L_i$ and $L_j$ match or not, respectively;
 \item Orientation reversal: $(L,h)\longrightarrow (L',h')$ where $L'$ is obtained from $L$ by changing
 the orientation of component $L_i$ and $h'_j=h_j$ if $j\neq i$ and $h'_i=-h_i$.
\end{itemize}

For any $h \in H_1(M;\mathbb{Z}_d)$ let us  define
\begin{equation}
 \tau_\C(M, h) =d^{-n}\sum_{(\epsilon_1,...,\epsilon_n)\in \ h}
\frac{F_\C(L (\omega^{\epsilon_1}, \dots, \omega^{\epsilon_n}) )}
{ F_\C(U_{1}(\omega))^{b_+} F_\C(U_{-1}(\omega))^{b_-}}
\end{equation}
where the shorthand $(\epsilon_1,...,\epsilon_n)\in h$ means that the summation is taken over all
elements of $(\Z_d)^n$ in the equivalence class of $h$.

\begin{thm}\label{main-cohom2} 
For any non--spin $d$--refinable modular category $\C$, 
$\tau_\C(M, h)$ is a topological invariant of the pair $(M, h)$. 
\end{thm}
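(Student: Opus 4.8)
The plan is to follow the proof of Theorem~\ref{main-spinc} almost verbatim, with $\Z_d$ in place of $\Z_{2d}$, no parity constraint on the coefficients, and Lemma~\ref{vanish} playing the role of Lemma~\ref{cancel}. First I would record that $\tau_\C(M,h)$ is well defined: the class $h\in H_1(M;\Z_d)=(\Z_d)^n/\Im L$ is a finite coset of $\Im L$, so the defining sum is finite, and the denominators are nonzero since $F_\C(U_{\pm 1}(\omega))\neq 0$ by the non-degeneracy axiom (as in the proof of Theorem~\ref{T}). It then remains to verify invariance under the three refined Kirby moves for the pair $(M,h)$ listed in this subsection: stabilization, handle slide, and orientation reversal.

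For the stabilization $(L,h)\mapsto(L\sqcup U_{\pm 1},(h,0))$, the computation I would carry out is $\sum_{\epsilon=0}^{d-1}\omega^{\epsilon}=\sum_{u=0}^{d-1}\bigl(\sum_{\epsilon=0}^{d-1}e_d^{\epsilon u}\bigr)\omega_u=d\,\omega_0$, which together with Lemma~\ref{vanish} (giving $F_\C(U_{\pm 1}(\omega_0))=F_\C(U_{\pm 1}(\omega))$) yields $\sum_{\epsilon=0}^{d-1}F_\C(U_{\pm 1}(\omega^{\epsilon}))=d\,F_\C(U_{\pm 1}(\omega))$. Since $U_{\pm 1}$ is unlinked from $L$, the coset $(h,0)\subset(\Z_d)^{n+1}$ consists of all tuples whose first $n$ coordinates form an element of $h$ and whose last coordinate is arbitrary, so the sum defining $\tau_\C(M,L';h')$ factors; the resulting factor $d$ is absorbed by the change of normalization from $d^{-n}$ to $d^{-(n+1)}$ together with the extra $F_\C(U_{\pm 1}(\omega))^{\pm 1}$ in the denominator, and $\tau_\C(M,L';h')=\tau_\C(M,L;h)$.

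For the handle slide $L\mapsto L'$ (sliding $L_i$ over $L_j$), I would decompose the dual Kirby color $\omega^{\epsilon_i}=\sum_{\nu\in\G}e_d^{\epsilon_i\deg(\nu)}\la\nu\ra\,\nu$ on the slid component, slide each $\nu$--colored copy of $L_i$ off the $\omega^{\epsilon_j}$--colored component by the \emph{dual sliding property}, collect the phase $e_d^{\mp\epsilon_j\deg(\nu)}$ from each summand, and recombine to conclude that the move replaces the color $\omega^{\epsilon_i}$ by $\omega^{\epsilon_i\mp\epsilon_j}$ and leaves $\omega^{\epsilon_j}$ unchanged. As $\epsilon_i\mapsto\epsilon_i\mp\epsilon_j$ is a bijection of $(\Z_d)^n$ carrying the coset $h$ onto $h'$ --- this is exactly the combinatorial rule $h'_i=h_i\pm h_j$ --- and the linking matrix changes only by a congruence so that $b_\pm$ are unchanged, the two sums agree term by term. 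Orientation reversal of $L_1$ I would treat exactly as in the proof of Theorem~\ref{main-spinc}: Corollary~\ref{or} together with $\deg(\l^*)=-\deg(\l)$ and $\la\l^*\ra=\la\l\ra$ gives $F_\C(L'(\omega^{-\epsilon_1},\omega^{\epsilon_2},\dots,\omega^{\epsilon_n}))=F_\C(L(\omega^{\epsilon_1},\dots,\omega^{\epsilon_n}))$; the linking matrices $(L'_{ij})={}^tS(L_{ij})S$ with $S=\diag(-1,1,\dots,1)$ have the same eigenvalues so $b'_\pm=b_\pm$; and $\epsilon_1\mapsto-\epsilon_1$ is the bijection of $(\Z_d)^n$ realizing $h'_1=-h_1$.

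I do not expect a genuine obstacle here: the three ingredients needed --- the \emph{dual sliding property}, Lemma~\ref{vanish}, and Corollary~\ref{or} --- are already in hand, and the argument is structurally identical to that of Theorem~\ref{main-spinc}. The only point that requires real care is the bookkeeping: pinning down the normalization constant in the stabilization step, and checking in each case that the reindexing of the summation coset is a genuine bijection matching the stated Kirby move on $H_1(M;\Z_d)$.
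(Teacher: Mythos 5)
Your proposal is correct and follows essentially the same route as the paper's proof: the stabilization step via $\sum_{i=0}^{d-1}\omega^i=d\,\omega_0$ combined with Lemma \ref{vanish}, the handle slide via the dual sliding property, and orientation reversal via Corollary \ref{or} with $b'_\pm=b_\pm$. The extra detail you supply on the handle-slide bookkeeping (tracking the phase $e_d^{\mp\epsilon_j\deg(\nu)}$ and the induced bijection of cosets) is exactly what the paper leaves implicit when it simply cites the dual sliding property.
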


\begin{proof}
In order to prove that $\tau_\C(M, h)$ is a topological invariant
of the manifold $(M,h)$,
we have to check invariance under the Kirby moves listed above.

We start by checking invariance under the first Kirby move.
Let $(L',h')$ be obtained from $(L,h)$ by a positive stabilization. 
\begin{eqnarray*}
\tau_\C(M,L';h')&=&d^{-(n+1)}\sum_{(\epsilon_1,...,\epsilon_{n+1})\in h'}
\frac{F_\C(L (\omega^{\epsilon_1}, \dots, \omega^{\epsilon_n}) )F_\C(U_1(\omega^{\epsilon_{n+1}}))}
{ F_\C(U_{1}(\omega))^{b_+ +1} F_\C(U_{-1}(\omega))^{b_-}}\\
&=&d^{-n}\sum_{(\epsilon_1,...,\epsilon_n)\in h}
\frac{F_\C(L (\omega^{\epsilon_1}, \dots, \omega^{\epsilon_n}) )}
{ F_\C(U_{1}(\omega))^{b_+} F_\C(U_{-1}(\omega))^{b_-}} \sum_{i=0}^{d-1}
\frac{F_\C(U_1(\omega^i))}{d F_\C(U_1(\omega))}\\
&=&\tau_\C (M,L;h) \frac{F_\C(U_1(\sum_{i=0}^{d-1}\omega^i))}{d F_\C(U_1(\omega))}.
\end{eqnarray*}
We compute \begin{eqnarray*}
 \sum_{i=0}^{d-1} \omega^i&=&\sum_{i=0}^{d-1}\sum_{j=0}^{d-1}\sum_{\lambda \in \G_j} e_d^{ij} \la \lambda \ra \lambda\\
 &=&\sum_{j=0}^{d-1}\sum_{\lambda \in \G_j}(\sum_{i=0}^{d-1}e_d^{ij})\la \lambda \ra \lambda\\
 &=& d \cdot \omega_0
\end{eqnarray*}
since $\sum_{i=0}^{d-1}e_{d}^{ij}=0$ unless $j=0$. Using Lemma \ref{vanish} we get that $F_\C(U_1(\omega))=F_\C(U_1(\omega_0))$
and therefore $\tau_\C(M,L';h')=\tau_\C(M,L;h)$.

Analogously, $\tau_{\C}(M, h)$ is invariant under a negative stabilization.
The invariance under the second Kirby move is provided by the \emph{dual sliding property}.
Finally, we must check invariance under orientation reversal. For that, let $(L',h')$ 
be obtained from $(L,h)$
by changing the orientation of a component $L_k$.
Without any loss of generality, we may assume that $k=1$ and, just like in the proof of Theorem \ref{T},
we get $b'_+=b_+$, $b'_-=b_-$.
Since $h'=(-h_1,h_2,\cdots,h_n)$, it follows that
\begin{align*}
 \sum_{(\epsilon'_1,\cdots,\epsilon'_n)\in h'}F_\C(L'(\omega^{\epsilon'_1},\cdots, \omega^{\epsilon'_n}))&=\sum_{(\epsilon_1,\cdots,\epsilon_n)\in h}F_\C(L'(\omega^{-\epsilon_1},\omega^{\epsilon_2}, \cdots, \omega^{\epsilon_n}))\\
&=\sum_{(\epsilon_1,\cdots,\epsilon_n)\in h} \sum_{\l\in \G}e_d^{-\epsilon_1 \deg(\l)}\langle \l \rangle F_\C(L'(\l,\omega^{\epsilon_2}, \cdots, \omega^{\epsilon_n}))\\
&=\sum_{(\epsilon_1,\cdots,\epsilon_n)\in h} \sum_{\l\in \G}e_d^{\epsilon_1 \deg(\l^*)}\langle \l^* \rangle F_\C(L(\l^*,\omega^{\epsilon_2}, \cdots, \omega^{\epsilon_n}))\\
&=\sum_{(\epsilon_1,\cdots,\epsilon_n)\in h} \sum_{\mu \in \G}e_d^{\epsilon_1 \deg(\mu)}\langle \mu \rangle F_\C(L(\mu,\omega^{\epsilon_2}, \cdots, \omega^{\epsilon_n}))\\ 
&=\sum_{(\epsilon_1,\cdots,\epsilon_n)\in h} F_\C(L(\omega^{\epsilon_1},\cdots,\omega^{\epsilon_n})).
 \end{align*}
 The first equality above is a consequence of the fact that $(L'_{ij})={^tS}(L_{ij})S$, for $S=\diag(-1,1,\cdots,1)$ while the 
 third equality is an immediate application of Corollary \ref{or}. This concludes the proof.
\end{proof}



\section{Decomposition formula}
This section is devoted to the proof of Theorem 3. Reader interested in the case $\gcd(m, \delta)=1$
only can skip this section and consult
an easy direct argument in Appendix.

Throughout this section $d$ is any positive integer, it needs not to be even.

\subsection{Strategy of the proof}
Let us recall the setting.
We assume that $\C$ be a  modular category with cyclic group of invertible objects
 $G=\la t\ra$.
Let $\deg(t)=\delta\in \Z_d\cong \widehat G$ and $m\delta=d$.
Moreover, let us split
 $\delta=\alpha \beta$, such that $\gcd(\beta, \alpha m)=1$,
$\alpha\equiv m$ $\pmod 2$.




For any $V\in \Ob(\C)$ we can choose $\chi_V(t)=\kappa^{\deg(V)}$
where $\kappa$ is a primitive $d$--th root of unity.
 Let us fix the generator $t$, so that $\kappa=e^{\frac{i2\pi}{d}}$. 
The twist coefficient for $t$ is $\theta_t=b_{t,t}\langle t\rangle$ and satisfies
$\theta_t^2=\kappa^\delta=e^{\frac{i2\pi}{m}}$.
We consider the subgroup of invertible objects $H=\langle t^m\rangle\cong \Z_\delta$. Clearly, $H\subset \C_0$,
so the modular category $\mathcal{C}$ is $\delta$--refinable. The twist coefficient for the generator
$t^m$ is $\theta_{t^m}=b_{t,t}^{m^2}\langle t\rangle^m$. It is equal to $-1$ if
$\delta$ is even, $m$ is odd and either $\langle t\rangle=1$ and $b_{t,t}=\theta_t=e^{\frac{i\pi}{m}}$, or $\langle t\rangle=-1$ and $b_{t,t}=-\theta_t=-e^{\frac{i\pi}{m}}$; it is equal to $1$ in all other cases.
 The modular category $\mathcal{C}$ is $\delta$--spin if $\delta$ is even, 
$m$ is odd and $\theta_t$ has order $2m$, and $\delta$--cohomological in all other cases.

We now present the idea of the proof of the decomposition statement.
We define a tensor category ${\mathcal{C}'}$ with simple objects represented by 
${\G'}=\G\times \Z_\alpha$.
The tensor product in ${\mathcal{C}'}$ mimics central extension
of groups using $2$-cocycles. We lift the map  $\deg: \Gamma\rightarrow \Z_d$ into a map $f: \Gamma\rightarrow \Z_{d\alpha}$, which plays the role of a section.
Further,  we extend $f$ into a map $\Ob(\mathcal{C}')\rightarrow \Z_{\alpha d}$,  such that
  $$f(V,k)=f(V) +dk \quad \text{for}\quad (V,k)\in\G' .$$
Given two elements $(V,k)$ and $(W, l)$ of $\G'$
we define their tensor product as $(V\otimes W, k+l)$.
We  allow in $\C'$ direct sums of objects with homogenous $f$ value. 

 For  $X=(V,k)\in \Ob(\C')$ and $Y=(W,l)\in \Ob(\C')$ we set
$$\mathrm{Hom}_{\mathcal{C}'}(X,Y)=\left\{\begin{array}{l} 
 0 \text{ if }f(X)\neq f(Y)\\
 \mathrm{Hom}_{\mathcal{C}}(V , W)\text{ else.}
 \end{array}\right.$$

The category   ${\mathcal{C}'}$ is a  tensor category over $\k$ with unit object $(\1,0)$ and compatible duality.
Note that $(V^*,l)$ is a left and right dual for $(V,k)$ if $l$ is choosen so that 
 $f(V^*,l)=-f(V,k)$.

 \begin{pro}
 The category ${\mathcal{C}'}$ is semisimple with $\Gamma'$
 as representative set of simple objects. 
 The group of invertible objects is $G'\cong \Z_d\times \Z_\alpha$, generated by $(t,0)$
 and $(\1,1)$.
 \end{pro}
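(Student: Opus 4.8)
The plan is to verify semisimplicity, identify the simple objects, and then compute the group of invertible objects, treating these in turn.

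First I would check that $\mathcal{C}'$ is semisimple with $\Gamma'$ as a representative set of simple objects. The key observation is that the $\Hom$-spaces in $\mathcal{C}'$ are, by definition, either $0$ or a $\Hom$-space of $\mathcal{C}$, and that $\mathcal{C}$ is semisimple. For $(V,k)\in\Gamma'$ with $V\in\Gamma$ simple in $\mathcal{C}$, we have $\End_{\mathcal{C}'}((V,k))=\End_{\mathcal{C}}(V)=\k$ (the $f$-values agree trivially), so $(V,k)$ is simple. For distinct simple objects $(V,k)$, $(W,l)$ of $\mathcal{C}'$, either $f(V,k)\neq f(W,l)$, in which case $\Hom=0$, or $f(V,k)=f(W,l)$; in the latter case $\Hom_{\mathcal{C}'}((V,k),(W,l))=\Hom_{\mathcal{C}}(V,W)$, which is $0$ unless $V=W$, and if $V=W$ then the constraint $f(V)+dk=f(V)+dl$ in $\Z_{\alpha d}$ forces $k=l$. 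So the objects in $\Gamma'$ are pairwise non-isomorphic simples. Finally, any object of $\mathcal{C}'$ is by construction a direct sum of objects $(V,k)$ with $V\in\Ob(\mathcal{C})$ and $f$-homogeneous; decomposing each $V=\oplus_i V_i$ into simples in $\mathcal{C}$ (domination axiom) gives a decomposition of $(V,k)$ into objects $(V_i,k)$ — here one must note that $f$-homogeneity of $(V,k)$ does not a priori mean all $V_i$ have the same $\deg$, but the category only allows such $(V,k)$ to be formed when the summands already match up, and in any case the simple constituents are among $\Gamma'$. Thus $\Gamma'$ is a complete set of representatives and $\mathcal{C}'$ is semisimple; combined with the already-noted existence of a compatible duality and unit, this establishes the first two sentences.

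Next I would identify the invertible objects. An object $(V,k)\in\Gamma'$ is invertible precisely when there is $(W,l)$ with $(V\otimes W,k+l)\cong(\1,0)$, i.e. $V\otimes W\cong\1$ in $\mathcal{C}$ and $f(V\otimes W)+d(k+l)\equiv f(\1,0)=0$ in $\Z_{\alpha d}$; the first condition says $V\in G=\langle t\rangle$, and given $V=t^j$ the second is a condition mod $\alpha$ on $k+l$ that is always solvable. Hence the invertible objects of $\mathcal{C}'$ are exactly the $(t^j,k)$ with $0\le j<d$, $0\le k<\alpha$, so $|G'|=d\alpha$. To see $G'\cong\Z_d\times\Z_\alpha$ I would show $(t,0)$ has order $d$ and $(\1,1)$ has order $\alpha$ and that together they generate. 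The element $(\1,1)^{\otimes\alpha}=(\1,\alpha)=(\1,0)$ in $\Gamma'$ (since $k$ lives in $\Z_\alpha$), and no smaller power is trivial because $(\1,k)\cong(\1,0)$ forces $dk\equiv 0\pmod{\alpha d}$, i.e. $k\equiv 0\pmod\alpha$. For $(t,0)$: its $n$-th tensor power is $(t^n,0)$, which is isomorphic to $(\1,0)$ only if $t^n\cong\1$ (so $d\mid n$) and $f(t^n)\equiv 0\pmod{\alpha d}$; I would check that with the chosen lift $f$ one has $f(t^d)\equiv 0\pmod{\alpha d}$, so the order is exactly $d$. Finally every $(t^j,k)$ equals $(t,0)^{\otimes j}\otimes(\1,1)^{\otimes k'}$ for a suitable $k'$, so $(t,0)$ and $(\1,1)$ generate $G'$; since the group has order $d\alpha$ and contains elements of order $d$ and $\alpha$ generating it, and since $(t,0)$ and $(\1,1)$ commute, $G'\cong\Z_d\times\Z_\alpha$.

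The step I expect to be the main obstacle is the order computation for $(t,0)$, because it hinges on the precise interaction between the chosen section lift $f:\Gamma\to\Z_{d\alpha}$ and the tensor structure: one must confirm that $f(t^d)$ is divisible by $\alpha d$ (equivalently, that the ``$2$-cocycle'' implicit in how $f$ fails to be additive does not shift the order of $t$), and more generally that $f$ is consistent enough that the tensor product on $\mathcal{C}'$ is well-defined and associative. In fact the cleanest route is to record first, as a lemma, that $f(V\otimes W)\equiv f(V)+f(W)\pmod{d}$ always (since both reduce to $\deg$), so the ``defect'' $f(V\otimes W)-f(V)-f(W)$ is a multiple of $d$, i.e. lands in the $\Z_\alpha$ direction — this is exactly the central-extension-by-$\Z_\alpha$ picture alluded to — and then the order statement for $(t,0)$ follows from controlling this $\Z_\alpha$-valued defect on powers of $t$. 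Everything else is bookkeeping with the explicit description of $\Gamma'$ and the $\Hom$-spaces.
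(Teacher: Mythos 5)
Your overall strategy is sound, and in your closing paragraph you do isolate the device the paper actually uses: the defect $f(V)+f(V')-f(W)$ is divisible by $d$ whenever the simple $W$ occurs in $V\otimes V'$, so $\chi_{V,V'}^W=(f(V)+f(V')-f(W))/d$ makes sense in $\Z_\alpha$. But you put this observation in the wrong place, and as a result the semisimplicity step in your first paragraph does not go through as written. The paper's entire proof of semisimplicity is the single formula
$$\Id_{(V,j)}\otimes \Id_{(V',j')}=\sum_W\sum_{\nu\in I_{V,V'}^W} b_\nu\, \Id_{(W,\;j+j'+\chi_{V,V'}^W)}\, a_\nu\ ,$$
that is, the simple constituents of $(V,j)\otimes(V',j')$ are the objects $(W,\,j+j'+\chi_{V,V'}^W)$, with the $\Z_\alpha$--index shifted summand by summand. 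Your bookkeeping ``$(V_i,k)$ with unchanged second index'' fails precisely here: the objects $(W,j+j')$ have $f$--value $f(W)+d(j+j')$, which varies with $W$ and in general differs from $f(V)+f(V')+d(j+j')=f\bigl((V,j)\otimes(V',j')\bigr)$, so $\Hom_{\C'}\bigl((W,j+j'),(V,j)\otimes(V',j')\bigr)=0$ and these are not the constituents; indeed by your own homogeneity rule their direct sum is not even a legal object of $\C'$. Only after the shift by $\chi_{V,V'}^W$ do all summands acquire the common $f$--value $f(V,j)+f(V',j')$. So the needed idea is present in your write-up, but it is deferred as an ``obstacle'' instead of being the proof, and the decomposition step as actually written is wrong.

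On the group of invertible objects you are more careful than the paper, which only records that $(V,j)$ is invertible iff $V$ is and ``deduces'' the statement. Your worry about the order of $(t,0)$ does resolve: $f\bigl((t,0)^{\otimes d}\bigr)=d\,f(t)$ by additivity of $f$ under tensor product, and $d\,f(t)\equiv 0 \pmod{\alpha d}$ iff $\alpha\mid f(t)$; since $f(t)\equiv\deg(t)=\delta\pmod d$ and $\alpha$ divides both $\delta$ and $d$, this holds, so $(t,0)^{\otimes d}\cong(\1,0)$, the order is exactly $d$, $\langle(t,0)\rangle\cap\langle(\1,1)\rangle$ is trivial, and $G'\cong\Z_d\times\Z_\alpha$. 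This check is genuinely needed for the stated isomorphism type (for an arbitrary lift of an arbitrary grading the extension could be non-split); the paper omits it, and you should carry it out rather than merely flag it.
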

\begin{proof}
The object $(V,j)$ is invertible in $\mathcal{C}'$
if and only if $V$ is invertible in $\mathcal{C}$. We deduce the last statement.
To prove semisimplicity, it is enough to decompose the tensor product of two objects objects $(V,j)$, $(V',j')$ in $\Gamma '$. We have in the category $\mathcal{C}$ a decomposition
$$\Id_V\otimes \Id_{V'}=\sum_W\sum_{\nu\in I_{V,V'}^W} b_\nu \Id_W a_\nu\ .$$
We set $\chi_{V,V'}^W=(f(V)+f(V')-f(W))/d$. Then we have in the category $\mathcal{C}'$ the following decomposition
$$\Id_{(V,j)}\otimes \Id_{(V',j')}=\sum_W\sum_{\nu\in I_{V,V'}^W} b_\nu \Id_{(W,j+j'+\chi_{V,V'}^W)} a_\nu\ .$$
\end{proof}

Further, let us give $\C'$ a ribbon structure which twists the one given on $\mathcal{C}$.
The braiding is given by a formula
 $$b'_{(V,k),(W,l)}=\xi^{-f(V,k)f(W,l)}b_{V,W} \ ,$$
 with  appropriate choice of a root of unity $\xi$ whose order
 $2\alpha d$ if $d$ is even and $\alpha d$ if $d$ is odd.
Using duality, the twist is then  given by
 $$ \theta'_{(V,k)}=\xi^{-{f(V,k)}^2}\theta_V\ .$$
The corresponding colored link invariants
$F=F_\C$ and $F'=F_{\C'}$ are equal up to a power of $\xi$ which is computed from map $f$ and linking numbers.
Note that $\xi$ is choosen such that $\alpha m$ elements of $G'$ become transparent.

The $G'$--category $\mathcal{C'}$ is premodular and can be modularized as described in  \cite{Bru, BB}. Simple objects in the  modularization 
$\widetilde{\mathcal{C}}$ are obtained from those of $\mathcal{C'}$ quotienting by a free action. The set $\tilde \G$
of simple objects in $\tilde \C$ has cardinality
$|\G|/m$. 
Below we give a detailed proof of the decomposition formula in the spin case, the cohomological cases can be
proven similarly. 

 

 

\subsection*{Proof of Theorem 3, spin case}
We consider here the spin case, which means that $d$ is even, $m$ is odd and the twist $\theta_t$ has order $2m$. The generator $t$ can be choosen so that $\theta_t=e^{\frac{i\pi}{m}}$.
Let $\xi=e^{\frac{i\pi l}{\alpha^2 m}}$ with $\beta^2 l\equiv 1+\alpha^2 m$ mod $2\alpha^2 m$.
 Note that $\xi^{\alpha d=1}$ so that  the modified braiding is well defined.

 The braiding coefficients for the generators of $G'$ are:
 $$b'_{X,(t,0)}= (\kappa\xi^{-2\delta})^{f(X)}=e^{\frac{2i\pi(1-l\beta^2)f(X)}{ d }}\ ,$$
 $$b'_{X,(\1,1)}= (\xi^{-2d})^{f(X)}=e^{\frac{2i\pi lf(X)\beta}{\alpha}}\ .$$ 
 The twist coefficients are
 $$\theta'_{(t,0)}=e^{\frac{i\pi}{ m}}e^{-\frac{i\pi l\beta^2}{ m}}=-1\ ,$$
 $$\theta'_{(\1,1)}=e^{-\frac{i\pi l d^2}{ \alpha^2 m}}=1\ .$$
 
It follows that the group of transparent objects is generated by $(t,0)^\beta$ which has trivial twist and quantum dimension $1$.
 Applying the results  \cite{Bru, BB} we see that the category $\mathcal{C}'$ is modularizable,
 i.e. that there exists a modular category $\widetilde{\mathcal{C}}$ and a 
 dominant  ribbon functor $\mathcal{C}'\rightarrow \widetilde{\mathcal{C}}$.
 Here the group of transparent objects acts freely on the set $\Gamma'$ of simple objects in $\mathcal{C}'$. 
This is proved using the map $f$ and the fact that $f((t,0)^\beta)=\beta \delta$ has order $\alpha m$ which is the order of $(t,0)^\beta$. 
 Hence, the
 simple objects $\widetilde \G$ in $\widetilde{\mathcal{C}}$ are  represented by cosets in $\G'$ under this free action.

 In the category $\widetilde{\mathcal{C}}$ the group of invertible objects is $\widetilde G=\la(t,0),(\1,1)\ra\cong \Z_\beta\times \Z_\alpha\cong \Z_\delta$. The twist coefficient for $(t,0)$ is equal to $-1$, so
 the category $\widetilde{\mathcal{C}}$ is $\delta$--spin.
 Denote by $\omega'$ the Kirby element in $\mathcal{C}'$ which represents 
 $\alpha m$ times the Kirby element $\widetilde{\omega}$ in $\widetilde{\mathcal{C}}$. We write  the graded decomposition
 $$\omega'=\sum_{c\in \Z_\delta}\omega'_c=\alpha \sum_{c\in \Z_\delta}\omega_c\ ,$$
 so that 
 $$\omega'_c=\sum_{\gamma\in \Z_{\a d}\atop \gamma\equiv c \ \text{mod}\ \d}\sum_{X\in \G'\atop f(X)=\gamma} \la X\ra X=\sum_{\gamma\equiv c \ \text{mod}\ \d} \omega'_{|f=\gamma}\ .$$
 Moreover, for any $\Z_{\a d}\ni\gamma\equiv c$ mod $\delta$, the Kirby color $\widetilde{\omega}_c$ in $\widetilde{\C}$ is represented by $\omega'_{|f=\gamma}$, i.e.
 $$\widetilde{\omega}_c=\omega'_{|f=\gamma}=\frac{1}{\alpha m}\;\omega'_c\ .$$
Indeed, the set $\omega'_{|f=\gamma}$ consists of all $(V,k)\in \Gamma'$ such that $f(V)=\gamma-kd$ mod ${\a d}$.
There are $|\Gamma|/d$ such elements.  Acting with $(t^\beta,0)$ we can shift the degree of solutions by $\delta$. In this way
we obtain all $(V,k)\in \Gamma'$ with $\deg(V)= c$ mod $\delta$. Taking the quotient by this action we get $\tilde \omega_c$.

 It makes sense to evaluate both Reshetikhin-Turaev ribbon functors $F=F_\mathcal{C}$, and $F'=F_\mathcal{C'}$
 on $\mathcal{C}'$ colored links. Let $M=S^3(L)$ be a $3$--manifold given by surgery on the $n$-component link $L$ with signature $(b_+,b_-)$, and 
 $\sigma\in \Spin_\delta(M)$ represented by coefficients $c_j\in \Z_\delta$, $1\leq j\leq n$.
For objects $X_j\in \G'$, $1\leq j\leq n $, we have
$$F(L(X_1, \dots,X_n))=\xi^{ ^tf(X)Lf(X)} F'(L(X_1,\dots,X_n))\ $$ 
where $^tf(X)Lf(X)=\sum_{i,j}L_{ij}f(X_i)f(X_j)$.
 Note that the left hand side is invariant under action of $(t,0)^\beta$ on objects and  can be used for the evaluation of the reduced invariant 
$\tau_{\widetilde{\mathcal{C}}}(M,\sigma)$ which we want to compare with $\tau_{{\mathcal{C}}}(M,\sigma)$. We have

\begin{align*}
 \tau_\mathcal{C}(M,\sigma)&=\frac{F(L(\omega_{c_1},\dots,\omega_{c_n}))}
 {{(F(U_1(\omega_{\delta/2})))^{b_+}(F(U_{-1}(\omega_{\delta/2})))^{b_-}}}\\
 &=\frac{\alpha^{-n+b_++b_-}F(L(\omega'_{c_1},\dots,\omega'_{c_n}))}
 {{(F(U_1( \omega'_{\delta/2} )))^{b_+}(F(U_{-1}( \omega'_{\delta/2})))^{b_-}}}
\end{align*}
\begin{align*}
 F(L(\omega'_{c_1},\dots,\omega'_{c_n}))&=\sum_{\gamma\in(\Z_{\a d})^n\atop \gamma_i \equiv c_i\ \mathrm{mod}\ \delta}F(L(\omega'_{|f=\gamma_1},\dots,\omega'_{|f=\gamma_n}))\\
 &=\sum_{\gamma\in(\Z_{\a d})^n\atop\gamma_i \equiv c_i\ \mathrm{mod}\ \delta} \xi ^{^t\gamma L\gamma}
 F'(L(\omega'_{|f=\gamma_1},\dots,\omega'_{|f=\gamma_n}))\\
 &=  \sum_{\gamma\in(\Z_{\a d})^n\atop \gamma_i \equiv c_i\ \mathrm{mod}\ \delta}  \xi ^{^t\gamma L\gamma}F'(L(\tilde\omega_{c_1},\dots,\tilde\omega_{c_n}))
  \\
 &=  F'(L(\tilde\omega_{c_1},\dots,\tilde\omega_{c_n}))
\sum_{\gamma\in(\Z_{\a d})^n\atop \gamma_i \equiv c_i\ \mathrm{mod}\ \delta}  \xi ^{^t\gamma L\gamma}
 \end{align*} 
After normalization we get
\begin{align*}
\tau_\C(M,\sigma)&=\alpha^{-b_1(M)}\tau_{\widetilde{\C}}(M,\sigma) \,g^{-b_+}(\bar{g})^{-b_-}\sum_{\gamma\in(\Z_{\a d})^n \atop \gamma \equiv c\  \mathrm{mod}\ \delta}\xi ^{^t\gamma L\gamma} \quad{\text {where}}\\
 g&=\sum_{ \gamma\in\Z_{\a d}\atop   \gamma\equiv \delta/2\ \text{mod}\ \delta}\xi^{\gamma^2}\ 
\end{align*} 
and $\bar g$ is the complex conjugate.
 One  can check, following the graded construction in Section {\ref{refined}}
that the formula 
 $$\tau_\xi^\MOO(M,\sigma)=g^{-b_+}(\overline{g})^{-b_-} \sum_{\gamma \equiv c\  \mathrm{mod}\ \delta}\xi ^{^t\gamma L\gamma}\ $$
 defines an invariant of $(M,\sigma)$.
 We conclude
 $$\tau_\C(M,\sigma)=\alpha^{-b_1(M)}\tau_{\widetilde{\C}}(M,\sigma)\tau_\xi^\MOO(M,\sigma)\ . $$
In the case $\gcd(m,\d)=1$, we have $\alpha=1$ and $\sum_{j}L_{ij} \gamma_j=\frac{\delta}{2}\, L_{ii}$ mod $\delta$. Furthermore,
we can assume that our surgery presentation has even linking matrix (the obstruction  given by the spin cobordism group  vanishes),
 so that $c\in \Ker L$ mod $\d$. Decomposing $\gamma=\gamma_0+\delta x$ with  $\gamma_0\in \Ker L$ and $x\in \Z_m$ we see that
$$ \sum_{\gamma \equiv c\  \mathrm{mod}\ \delta}\xi ^{^t\gamma L\gamma}=
\sum_{x\in(\Z_m)^n}\xi ^{^t(\gamma_0+\delta x) L(\gamma_0+\delta x)}=\sum_{x\in(\Z_m)^n}\xi^{^t(\delta x) L (\delta x)}=
\sum_{x\in(\Z_m)^n}\xi^{^tx L x}
$$
does not depend on  $c$. 
Summing over $\sigma$ we get
 $$\tau_\C(M)=\tau_{\widetilde{\C}}(M)\tau_\xi^\MOO(M)\  .$$ 
\hfill $\square$\\

 \section*{Appendix}
Here we give a simple direct proof of 
   Theorem 3 in the case when $\gcd(m, \delta)=1$. For  readers convenience, we repeat the statement.

\begin{thm}\label{decomposition} Let $\C$ be
a modular $d$--category with the group $G=\la t\ra$ of invertible objects, 
such that $\deg t=\delta$, $\gcd(\delta, d/\delta)=1$.
Then  there are exists a subcategory $\widetilde \C \subset \C$ and a root of unity $\xi$, such that
for any closed orientable $3$--manifold $M$
$$\tau_\C(M)=\tau_{\widetilde{\C}}(M)\tau^{\rm {MMO}}_{\xi}(M) .$$
\end{thm}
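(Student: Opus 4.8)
The plan is to locate inside $\C$ a small pointed modular subcategory that carries the Gauss--sum part of the invariant and to split it off as a tensor factor. Put $m=d/\delta$ and $s:=t^{\delta}$; since $\delta\mid d$ the object $s$ has tensor order $m$, so $\langle s\rangle\cong\Z_m$. Let $\mathcal{D}\subset\C$ be the full subcategory whose simple objects are $\1,s,\dots,s^{m-1}$, a pre--modular pointed category on $\Z_m$. The first step is to see that $\mathcal{D}$ is modular. Bimultiplicativity of the braiding on invertible objects gives $b_{s,s}=b_{t,t}^{\delta^{2}}$, and since $\langle t\rangle^{2}=1$ (Lemma~\ref{lem21}) we have $b_{t,t}^{2}=\theta_t^{2}=\kappa^{\delta}=e^{2\pi i/m}$, whence $b_{s,s}^{2}=e^{2\pi i\delta^{2}/m}$. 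Here $\gcd(\delta,m)=1$ is used: it forces $b_{s,s}^{2}$ to be a \emph{primitive} $m$--th root of unity, so the monodromy pairing $(k,l)\mapsto b_{s,s}^{2kl}$ on $\langle s\rangle$ is non--degenerate, i.e. $\1$ is the only transparent object of $\mathcal{D}$, while $\langle\omega_{\mathcal{D}}\rangle=\sum_{k}\langle s^{k}\rangle^{2}=m\neq0$. By Proposition~\ref{twisted}, $\mathcal{D}$ is modular. Evaluating the surgery formula inside the pointed category $\mathcal{D}$ (colouring the $i$--th component of $L$ by $\omega_{\mathcal D}$ produces a framing factor $\theta_{s^{k_i}}^{L_{ii}}$ and linking factors $b_{s,s}^{2k_ik_jL_{ij}}$, the quantum dimensions being $\pm1$ and cancelling against the unknot normalisations) then identifies $\tau_{\mathcal{D}}(M)$ with the Murakami--Ohtsuki--Okada Gauss sum $\tau^{\mathrm{MMO}}_{\xi}(M)$, where $\xi$ is the root of unity of order $m$ determined by the twist of $s$.

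Next I would set $\widetilde{\C}:=C_{\C}(\mathcal{D})$, the centraliser of $\mathcal{D}$ in $\C$ (the full subcategory of objects having trivial monodromy with every power of $s$). Because $\mathcal{D}$ is a non--degenerate full subcategory of the modular category $\C$, Müger's splitting theorem (cf.\ \cite{BB}) gives a ribbon equivalence $\C\simeq\mathcal{D}\boxtimes\widetilde{\C}$ with $\widetilde{\C}$ again modular; concretely this says that the $\langle s\rangle$--action on $\Gamma$ by tensoring is free and that $\widetilde\Gamma$ is a set of representatives for its orbits (both facts again consequences of the non--degeneracy of $\mathcal{D}$). Under this identification $\Gamma=\{d\otimes\tilde c:d\in\langle s\rangle,\ \tilde c\in\widetilde\Gamma\}$ and quantum dimensions are multiplicative, so the Kirby colour factors:
\[
\omega_{\C}\;=\;\Bigl(\sum_{d\in\langle s\rangle}\langle d\rangle\,d\Bigr)\otimes\Bigl(\sum_{\tilde c\in\widetilde\Gamma}\langle\tilde c\rangle\,\tilde c\Bigr)\;=\;\omega_{\mathcal{D}}\otimes\omega_{\widetilde{\C}}.
\]

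With this the invariant decomposes. Writing $M=S^{3}(L)$, colour every component of $L$ by $\omega_{\C}=\omega_{\mathcal{D}}\otimes\omega_{\widetilde{\C}}$ and cable it into an $\omega_{\mathcal{D}}$--coloured strand running parallel to an $\omega_{\widetilde{\C}}$--coloured one. Since every object of $\mathcal{D}$ has trivial monodromy with every object of $\widetilde{\C}$, all crossings between $\mathcal{D}$-- and $\widetilde{\C}$--coloured strands may be removed by isotopy, so the coloured link splits; as $F_{\C}$ restricts to $F_{\mathcal{D}}$ (resp.\ $F_{\widetilde{\C}}$) on graphs coloured in $\mathcal{D}$ (resp.\ $\widetilde{\C}$) by uniqueness of the ribbon functor (Theorem~\ref{functor}), we get $F_{\C}\bigl(L(\omega_{\C},\dots,\omega_{\C})\bigr)=F_{\mathcal{D}}\bigl(L(\omega_{\mathcal{D}},\dots)\bigr)\cdot F_{\widetilde{\C}}\bigl(L(\omega_{\widetilde{\C}},\dots)\bigr)$, and likewise for the $\pm1$--framed unknot normalisations; the signature $(b_{+},b_{-})$ of $L$ is of course unchanged. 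Dividing, $\tau_{\C}(M)=\tau_{\mathcal{D}}(M)\,\tau_{\widetilde{\C}}(M)=\tau_{\widetilde{\C}}(M)\,\tau^{\mathrm{MMO}}_{\xi}(M)$, as asserted.

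The main obstacle is the splitting $\C\simeq\mathcal{D}\boxtimes\widetilde{\C}$, i.e.\ the freeness of the $\langle s\rangle$--action on simple objects and the fact that $\widetilde\Gamma$ is a transversal; once that is in place the rest is bookkeeping --- the elementary arithmetic showing $\gcd(\delta,m)=1$ makes $\mathcal{D}$ honestly modular and pins it to the MMO Gauss sum, and the standard ``trivial monodromy splits the coloured link'' manipulation. I would also remark that nothing above uses the spin hypothesis, so the argument is uniform; in the cohomological cases only the interpretation of $\tau^{\mathrm{MMO}}_{\xi}$ changes.
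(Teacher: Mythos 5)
Your argument is correct and lands on the same decomposition as the paper --- your $\widetilde{\C}=C_\C(\mathcal D)$ is exactly the paper's $\widetilde{\C}$, since $\chi_\lambda(t^\delta)=e^{2\pi i\,\delta\deg(\lambda)/d}=1$ iff $m\mid\deg(\lambda)$, and the pointed subcategory $\mathcal D=\la t^\delta\ra$ is what carries the Gauss sum --- but you establish the splitting by a genuinely different mechanism. The paper never invokes M\"uger's theorem: it verifies by direct computation (Proposition \ref{crossing}) that each $t^{k\delta}$ braids trivially with every object of $\widetilde{\Gamma}$, decomposes the Kirby colour as $\omega=\sum_{i}\la t\ra^{i\delta}t^{i\delta}\widetilde{\omega}$, proves modularity of $\widetilde{\C}$ via the killing property and Proposition \ref{twisted}, and then evaluates $F_\C(L(t^{i_1\delta},\dots,t^{i_n\delta}))$ explicitly after choosing a surgery presentation with unknotted components (Lickorish). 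You instead note that $\gcd(\delta,m)=1$ forces $b_{s,s}^2=e^{2\pi i\delta^2/m}$ to be a primitive $m$--th root of unity, so $\mathcal D$ is a modular full subcategory, and appeal to M\"uger's splitting $\C\simeq\mathcal D\boxtimes C_\C(\mathcal D)$ \cite{Mug}; the cabling-and-unlinking argument then factors $F_\C$ on an arbitrary surgery presentation. Your route buys conceptual economy, modularity of $\widetilde{\C}$ for free, and no need for the unknotted-presentation trick; the paper's route buys self-containedness (only the machinery of Section \ref{graded} is used) and explicit control of the root of unity $\xi=\eta\la t\ra^{\delta}$ with $\xi^2=e_d^{\delta^3}$. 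One small slip in your closing remark: as the paper notes, $\xi$ has order $2m$ rather than $m$ when $m$ is even, which is harmless for the statement as given.
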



Let $m$ be  such that $\delta  m=d$  and 
we set $$\widetilde{\Gamma}:=\{\l\in \Gamma\;\;| \;\; \deg(\l)=0\pmod m\}.$$
Let $\widetilde{\C}$ be the full ribbon subcategory of $\C$ generated by $\widetilde{\Gamma}$ 
and $\widetilde{\omega}$ be the corresponding Kirby color.
Let $K=\langle t^m\rangle.$ In this situation $K$ has order $\delta$
and $\widetilde{\Gamma}=\Gamma_0\amalg \Gamma_m\amalg \cdots \amalg \Gamma_{(\delta-1)m}.$
Moreover, consider $\eta \in \mathbb{C}$ such that 
\begin{center}
\psfrag{a}{$t^{\delta}$}
\psfrag{b}{$t^{\delta}$}
\psfrag{c}{$=\eta$}
\includegraphics{transparent2.eps}
\end{center}


We will need the following proposition.
\begin{pro} \label{crossing} Let $\xi= \eta  \langle t \rangle ^{\delta}$.
Then we have the following equalities of morphisms:
 \begin{enumerate}[(a)]
  \item for any $\l\in \widetilde{\Gamma}$ and $k\in \{0,\cdots,m-1\}$
\begin{center}
\psfrag{b}{$t^{k\delta}$}
\psfrag{a}{$\l$}
\psfrag{c}{$\;=\;\;$}
\includegraphics{transparent.eps}
\end{center}

\item for any $k,s\in \{0,\cdots,m-1\}$
\begin{center}
\psfrag{a}{$t^{k\delta}$}
\psfrag{b}{$t^{s\delta}$}
\psfrag{c}{$\;=\; \eta^{ks}\;$}
\includegraphics{transparent2.eps}
\end{center}
\item for any $k,s\in \{0,\cdots,m-1\}$
\begin{center}
\psfrag{a}{$t^{k\delta}$}
\psfrag{b}{$t^{s\delta}$}
\psfrag{c}{$\;=\; \xi^{2ks}\;$}
\includegraphics{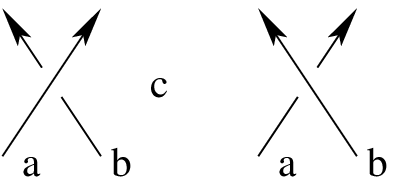}
\end{center}

\item for any $k\in \{0,\cdots,m-1\}$, the twist coefficient $\theta_{t^{k\delta}}=\xi^{k^2}$.

\end{enumerate}
\end{pro}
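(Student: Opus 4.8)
The plan is to notice that every one of (a)--(d) is an identity between scalars. In each picture one of the two colours is a power of $t$, hence invertible, and since $t^{k\delta}\otimes X$ is simple for every simple $X$ (tensoring with an invertible object preserves simplicity, as proved in Section~\ref{graded}), every $\Hom$--space that occurs is one--dimensional, so each braiding, each double braiding and each twist appearing is multiplication by a scalar. With this reduction the proposition becomes bookkeeping with the two defining identities of a braiding and with the twist axiom $\theta_{V\otimes W}=b_{W,V}b_{V,W}(\theta_V\otimes\theta_W)$.

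For (a): by the construction of the $\widehat G$--grading in Section~\ref{graded}, braiding $\l$ past the invertible object $t^{k\delta}$ is multiplication by $\chi_\l(t)^{k\delta}$, a root of unity of order $d/\gcd(d,\deg\l)$. Since $\l\in\widetilde\G$ means $m\mid\deg\l$, and $\delta m=d$, we get $\chi_\l(t)^{k\delta}=1$, which is exactly the transparency relation displayed in (a). For (b): iterating $b_{U\otimes V,W}=(b_{U,W}\otimes\id_V)(\id_U\otimes b_{V,W})$ and $b_{U,V\otimes W}=(\id_V\otimes b_{U,W})(b_{U,V}\otimes\id_W)$, and using that every elementary braiding among powers of $t^\delta$ is a scalar, gives $b_{t^{k\delta},t^{s\delta}}=(b_{t^\delta,t^\delta})^{ks}=\eta^{ks}$, with $\eta=b_{t^\delta,t^\delta}$ by definition.

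For (d) the key point is that the twist restricts to a quadratic form on $\langle t^\delta\rangle$. First, exactly as in the proof of Lemma \ref{lem21} (whose only input is invertibility of the strand colour), $\theta_{t^\delta}=b_{t^\delta,t^\delta}\la t^\delta\ra$; by multiplicativity of quantum dimension and $\la t\ra=\pm1$ (Lemma \ref{lem21}) this equals $\eta\la t\ra^\delta=\xi$, and moreover $\xi^2=\eta^2\la t\ra^{2\delta}=\eta^2$. The twist axiom, together with the fact that the monodromy is multiplicative in each slot (a consequence of the braiding identities), then yields the recursion $\theta_{t^{(k+1)\delta}}=(b_{t^\delta,t^\delta})^{2k}\,\theta_{t^{k\delta}}\,\theta_{t^\delta}=\eta^{2k}\xi\,\theta_{t^{k\delta}}$, which, using $\theta_{t^\delta}=\xi$ and $\eta^2=\xi^2$, solves to $\theta_{t^{k\delta}}=\xi^{k^2}$. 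Finally (c): the displayed morphism is the double braiding of $t^{k\delta}$ with $t^{s\delta}$, so by (b) it is $(b_{t^\delta,t^\delta})^{2ks}=\eta^{2ks}=\xi^{2ks}$; equivalently, by the twist axiom and (d), this monodromy equals $\theta_{t^{(k+s)\delta}}\theta_{t^{k\delta}}^{-1}\theta_{t^{s\delta}}^{-1}=\xi^{(k+s)^2-k^2-s^2}=\xi^{2ks}$.

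I do not expect a serious obstacle. The only inputs particular to this setting, rather than generic (pre)modular nonsense, are the congruence $m\mid\deg\l$ with $\delta m=d$ in (a) and the identity $\la t\ra^2=1$ from Lemma \ref{lem21} in (d); the latter is precisely what produces the clean exponent $k^2$ rather than a correction by a power of $\la t\ra$. The one thing that will need care is purely presentational: matching the orientations and crossing signs in the four figures to the scalars above, and checking in each case that the relevant $\Hom$--space really is one--dimensional so that the ``braiding is a scalar'' reduction is legitimate.
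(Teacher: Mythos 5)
Your argument is correct and is exactly the ``straightforward computation'' that the paper leaves to the reader (its proof of Proposition~\ref{crossing} is the single remark that one uses $\langle t\rangle=\pm1$ and $\xi^2=\eta^2=e_d^{\delta^3}$): you reduce each picture to a scalar via simplicity of $t^{k\delta}\otimes X$, get (a) from $\chi_\l(t^{k\delta})=\kappa^{k\delta\deg\l}=1$ for $m\mid\deg\l$, (b) and (c) from multiplicativity of the braiding on powers of $t^\delta$, and (d) from $\theta_{t^\delta}=b_{t^\delta,t^\delta}\langle t^\delta\rangle=\xi$ as in Lemma~\ref{lem21} together with the twist axiom and $\eta^2=\xi^2$. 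No gaps; the only care needed is the presentational matching of crossings to scalars that you already flag.
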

The proof is a straightforward computation using that $\langle t\rangle=\pm 1$ and 
$\xi^2=\eta^2=e_d^{\delta^3}$.

\begin{lem}
The category $\widetilde{\C}$ is modular and reduced, in the sense that it is ${\widetilde{G}}$--refinable.
\end{lem}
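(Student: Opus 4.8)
The plan is to show that $\widetilde{\C}$ satisfies the hypotheses of Proposition~\ref{twisted}: that it is pre-modular with $\langle \widetilde\omega\rangle \neq 0$ and has no non-trivial transparent objects, and separately that $\widetilde G_{\widetilde\C} = \widetilde G$, i.e. all invertible objects of $\widetilde\C$ lie in its trivial degree component.

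First I would check that $\widetilde\C$ is closed under tensor product, hence a genuine pre-modular subcategory. Since $\widetilde\Gamma = \{\lambda \in \Gamma \mid \deg(\lambda) = 0 \pmod m\}$ and $\deg$ is additive under $\otimes$ by Proposition~\ref{Gcategory}, the tensor product of two objects in $\widetilde\Gamma$ decomposes into simples of degree $\equiv 0 \pmod m$; duality is handled similarly since $\deg(\lambda^*) = -\deg(\lambda)$. So $\widetilde\C$ is a ribbon category with finite set of simples $\widetilde\Gamma$, containing $\1$ (degree $0$) and closed under duals and direct summands.

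Next, the key point is \emph{non-degeneracy}. The transparent objects of $\widetilde\C$ are those $\lambda \in \widetilde\Gamma$ which have trivial braiding with every object of $\widetilde\Gamma$; I must show the only such object is $\1$. An object $\lambda$ is transparent in $\C$ exactly when its character $\chi_\lambda \in \widehat G$ is trivial, which since $\C$ is modular forces $\lambda = \1$. In $\widetilde\C$ we only test against $\widetilde\Gamma$, so I need: if $\chi_\lambda$ restricted to $K = \langle t^m\rangle$ is trivial (equivalently $\chi_\lambda(t^m) = 1$, the braiding condition coming from part (a) of Proposition~\ref{crossing} — which asserts precisely that $t^{k\delta}$ braids trivially with every $\lambda \in \widetilde\Gamma$), and moreover $\lambda$ braids trivially with all other objects of $\widetilde\Gamma$, then $\lambda = \1$. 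The cleanest route: the full braiding data on $\Gamma$ is non-degenerate (the $S$-matrix is invertible), and $\widetilde\Gamma$ is the set of $\lambda$ with $\chi_\lambda(t) \in \langle e_d^m\rangle$; one shows that the restricted $S$-matrix $S|_{\widetilde\Gamma \times \widetilde\Gamma}$ is, up to a nonzero scalar depending only on $|K|$ and the structure of the grading, a block of the full $S$-matrix that remains invertible because $\widetilde\Gamma$ is a union of whole $\chi(t)$-level sets and the invertible objects act freely. Concretely, I would either invoke the standard modularization argument — $\widetilde\C$ is the degree-$0$ part with respect to the $\Z_m$-grading induced by $\chi_\lambda(t) \bmod$ (the subgroup of order $\delta$), and since $\gcd(m,\delta) = 1$ this $\Z_m$-grading is \emph{full} (every residue occurs) and carried by the invertible subgroup $\langle t^\delta\rangle$ which by Proposition~\ref{crossing}(b)--(d) behaves like a pointed category on $\Z_m$ — so that $\C$ splits (as abelian categories with compatible $S$) and the non-degeneracy passes to each piece; or give the direct linear-algebra computation that $\det(S|_{\widetilde\Gamma}) \neq 0$ using that the columns indexed by distinct $K$-orbits are forced to be independent by the full non-degeneracy of $S$ together with transparency of $\langle t^\delta\rangle$ on $\widetilde\Gamma$. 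Either way $\langle\widetilde\omega\rangle \neq 0$ follows from the Note after Proposition~\ref{twisted} once modularity is established, or can be seen directly as $\langle\widetilde\omega\rangle = \frac{1}{m}\langle\omega\rangle$ up to a root of unity by a sliding argument analogous to Lemma~\ref{cancel}.

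Finally, for \emph{reducedness} I must show every invertible object of $\widetilde\C$ lies in $(\widetilde\C)_0$. The invertible objects of $\C$ are $G = \langle t\rangle$; those in $\widetilde\Gamma$ are the powers $t^j$ with $\deg(t^j) = j\delta \equiv 0 \pmod m$, i.e. $m \mid j\delta$; since $\gcd(m,\delta)=1$ this means $m \mid j$, so the invertible objects of $\widetilde\C$ are exactly $\widetilde G = \langle t^m\rangle = K$, of order $\delta$. By Proposition~\ref{crossing}(a), each $t^{k\delta} \in K$ braids trivially with every object of $\widetilde\Gamma$, hence has trivial character on all of $\widetilde G_{\widetilde\C}$ — wait, more precisely its \emph{degree} in the $\widetilde G$-grading of $\widetilde\C$ is determined by its braiding with $\widetilde G = K$, and Proposition~\ref{crossing}(a)/(b) shows $t^{k\delta}$ has trivial braiding with $K$ (since $k\delta \cdot$ anything $\equiv 0$), so $K \subset (\widetilde\C)_0$, which is exactly the definition of $\widetilde\C$ being $\widetilde G$-refinable, i.e. reduced. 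The main obstacle I anticipate is the non-degeneracy step: transferring invertibility of $S$ from $\Gamma$ to the sub-block $\widetilde\Gamma$ is not automatic for an arbitrary subset, and the argument genuinely uses $\gcd(m,\delta)=1$ together with the pointed structure on $\langle t^\delta\rangle$ described in Proposition~\ref{crossing} — this is where I would be most careful, likely phrasing it as "the $\Z_m$-grading on $\C$ by $\chi_\lambda(t)$ modulo $K$ is full and free on the invertibles, hence $\C \simeq \widetilde\C \boxtimes (\text{pointed }\Z_m\text{-category})$ as ribbon categories up to the $\MOO$-correction, and modularity is multiplicative."
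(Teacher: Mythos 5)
Your overall strategy is the same as the paper's: verify normalization, duality and domination directly, and reduce non-degeneracy to Proposition \ref{twisted} by showing $\la\widetilde\omega\ra\neq 0$ and that $\widetilde\C$ has no non-trivial transparent objects. The problem is that this last point --- the entire content of the lemma --- is never actually established. You offer a menu of sketches, and each has a defect. First, the assertion ``an object $\lambda$ is transparent in $\C$ exactly when its character $\chi_\lambda\in\widehat G$ is trivial'' is false: transparency means trivial monodromy with \emph{every} object of $\C$, whereas $\chi_\lambda$ only records the monodromy with the invertible objects; in general $\C_0$ contains many non-transparent objects with trivial character. Second, the proposed splitting $\C\simeq\widetilde\C\boxtimes(\text{pointed }\Z_m\text{-category})$ is essentially the conclusion of Theorem \ref{decomp} itself, so invoking it here is circular (or at least defers all the work). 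Third, the ``direct linear-algebra'' claim that the block $S|_{\widetilde\G\times\widetilde\G}$ inherits invertibility because ``$\widetilde\G$ is a union of whole level sets'' is exactly the statement that needs proof; invertibility of a matrix does not pass to principal submatrices, and you give no mechanism for why it does here.

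The missing mechanism is concrete and short. Since $\gcd(\delta,m)=1$, the translates $t^{i\delta}\widetilde\G$, $0\le i<m$, partition $\G$, giving $\omega=\sum_{i=0}^{m-1}\la t\ra^{i\delta}t^{i\delta}\widetilde\omega$. By Proposition \ref{crossing}(a) each $t^{i\delta}$ has trivial monodromy with every $\lambda\in\widetilde\G$, so encircling such a $\lambda$ by $\omega$ equals $m$ times encircling it by $\widetilde\omega$ (the factors $\la t\ra^{2i\delta}=1$ cancel). The killing property in the modular category $\C$ then forces the $\widetilde\omega$-encirclement of every non-trivial $\lambda\in\widetilde\G$ to vanish, which by the killing lemma applied to the pre-modular category $\widetilde\C$ (with $\la\widetilde\omega\ra=\la\omega\ra/m\neq0$) shows $\widetilde\C$ has no non-trivial transparent objects; Proposition \ref{twisted} then gives modularity. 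Your reducedness argument reaches the right conclusion ($m\mid j\delta$ iff $m\mid j$, so the invertibles of $\widetilde\C$ are $K=\la t^m\ra$), but the justification conflates $K=\la t^m\ra$ (order $\delta$) with $\la t^{\delta}\ra$ (order $m$, the subject of Proposition \ref{crossing}(a), and disjoint from $\widetilde\G$ except for $\1$); the correct reason $K\subset(\widetilde\C)_0$ is simply that $\deg(t^{jm})=jd\equiv0\pmod d$, so these objects already lie in $\C_0$.
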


\begin{proof}
Clearly,
 $1\in \widetilde{\G}$ since $\deg(1)=0$.
Duality axiom holds since for $\l\in \widetilde{\G}$, $\deg(\l^*)=-\deg(\l)$
and hence $\l^*\in \widetilde{\G}$. Domination follows trivially from the same property of $\C$.
It remains to prove  the non-degeneracy.

Observe that  the Kirby color $\omega$ decomposes as the sum
\begin{equation*}
\omega=\sum_{i=0}^{m-1}  \langle t \rangle^{i\delta } t^{i\delta }\widetilde{\omega}
\end{equation*}
since $(\d,m)=1$.
Decomposing $\omega$ as above and using Proposition \ref{crossing} (a) we get  for any $\lambda \in \widetilde{\Gamma}$
the following equalities 
\begin{align*}
 \psdiag{10}{20}{decomposable01}&=\sum_{i=0}^{m-1} \langle t \rangle^{i\delta } \psdiag{10}{20}{decomposable02}=\sum_{i=0}^{m-1}
 \langle t \rangle^{i\delta } \psdiag{10}{22}{decomposable03}\\
 &=\sum_{i=0}^{m-1}\langle t \rangle^{2i\delta } \psdiag{10}{20}{decomposable04}=m \psdiag{10}{20}{decomposable04}.
\end{align*}
The \emph{killing property} combined with Proposition \ref{twisted} implies non-degeneracy.
\end{proof}







\subsection*{Proof of Theorem }
Let $M$ be presented by 
an oriented framed link
$L=(L_1,\cdots, L_n)$ with all components $L_i$ unknotted (such a link always exists, see \cite{Lickorish}).
Then, the invariant
\begin{equation*}
\tau_\C(M)=\frac{F_\C(L(\omega,\cdots,\omega))}{(F_\C(U_1(\omega)))^{b_+}(F_\C(U_{-1}(\omega)))^{b_-}}.
\end{equation*}
If we replace $\omega=\sum_{i=0}^{m-1}\langle t \rangle^{i\delta } t^{i\delta}\widetilde{\omega}$ in $F_{\C}(L(\omega,\cdots,\omega))$
and apply Proposition (a) we obtain:
\begin{equation*}\label{first}
F_{\C}(L(\omega,\cdots,\omega))=F_{\widetilde{\C}}(L(\widetilde{\omega},\cdots,\widetilde{\omega}))\sum_{i_1,\cdots,i_n=0}^{m-1}
\langle t \rangle^{(i_1+\cdots +i_n)\delta } F_{\C}(L(t^{i_1\delta},\cdots,t^{i_n\delta})).
\end{equation*}
In particular, for $\epsilon=\pm 1$, we have
\begin{equation*}
F_{\C}(U_\epsilon(\omega))=F_{\widetilde{\C}}(U_\epsilon(\widetilde{\omega}))\sum_{i=0}^{m-1} \langle t \rangle^{i\delta } F_\C(U_\epsilon(t^{i\delta})).
\end{equation*}

Given the link $L$ with components colored by $t^{i_1\delta},\cdots, t^{i_n\delta}$, 
$F_{\C}(L(t^{i_1\delta},\cdots,t^{i_n\delta}))$ can be computed as follows: first 
we make each component of $L$ zero framed,
then we unlink the components (using Proposition  (c) and (d) as many times as necessary).
Finally, we obtain $n$ disjoint and unlinked copies of the zero framed unknot with colors $t^{i_1\delta},\cdots, t^{i_n\delta}$
and the relation: 
\begin{equation*}
F_{\C}(L(t^{i_1\delta},\cdots,t^{i_n\delta}))=\langle t \rangle^{(i_1+\cdots +i_n)\delta }\cdot
 \xi^{(i_1,\cdots,i_n) L (i_1,\cdots,i_n)^t },
\end{equation*}
where $(L_{ij})$ is the linking matrix of $L$. Similarly
\begin{equation*}
 F_\C(U_\epsilon(t^{i\delta}))= \langle t\rangle^{i\delta}\cdot \xi^{\epsilon i^2}
\end{equation*}
and the Reshetikin-Turaev invariant decomposes as
\begin{gather*}
\tau_\C(M)=\tau_{\widetilde{\C}}(M) \frac{\sum_{l\in (\Z_m)^n}\xi^{^tl L l}}{(\sum_{i\in \Z_m} \xi^{i^2})^{b_+}
(\sum_{i\in \Z_m} \xi^{-i^2})^{b_-}}.
\end{gather*}

Note that for $d$ even, $\xi$ is an $m$th root of unity if $m$ is odd
and a $2m$th root of unity if $m$ is even and according to \cite{MOO}, 
\begin{equation*}
\tau^{\rm MMO}_\xi(M)=\frac{\sum_{l\in (\Z_m)^n}\xi^{^tl L l}}{(\sum_{i\in \Z_m} \xi^{i^2})^{b_+}
(\sum_{i\in \Z_m} \xi^{-i^2})^{b_-}}
\end{equation*}
is a topological invariant of $M$ independent on the choice of $L$, known as the Murakami--Ohtsuki--Okada
invariant. \hfill$\square$



%

\end{document}